\documentclass[12pt]{amsart}
\usepackage[margin=1in]{geometry}
\usepackage{latexsym}
\usepackage{float}
\usepackage{amssymb}
\usepackage[all]{xy}
\usepackage{epsfig}
\usepackage{color}
\usepackage{graphics}
\usepackage{hyperref}

\newtheorem{theorem}{Theorem}[section]

\newtheorem{Thm}[theorem]{Theorem}
\newtheorem{Lem}[theorem]{Lemma}
\newtheorem{Prop}[theorem]{Proposition}
\newtheorem{Cor}[theorem]{Corollary}
\newtheorem{Def}[theorem]{Definition}

\newcommand{\al}{\alpha}
\newcommand{\be}{\beta}

\newcommand{\ep}{\epsilon}
\newcommand{\del}{\delta}

\begin{document}

\title[The rectangle condition does not detect the strong irreducibility]{The rectangle condition does not detect the strong irreducibility}
\author{Bo-hyun Kwon, Sungmo Kang, and Jung Hoon Lee}

\address{Department of Mathematical Sciences, UNIST, 50 UNIST-gil, Ulsan 44919, Republic of Korea}\email{bortire74@gmail.com}
\address{Department of Mathematics Education, Chonnam National University, 77 Yongbong-ro, Buk-gu, Gwangju 61186, Republic of Korea}\email{skang4450@chonnam.ac.kr}
\address{Department of Mathematics and Institute of Pure and Applied Mathematics, Jeonbuk National University, 567 Baekje-daero, Deokjin-gu, Jeonju, Jeonbuk, 54896, Republic of Korea}\email{junghoon@jbnu.ac.kr}

\thanks{The first author is supported by Basic Science Research Program through the National Research Foundation of Korea(NRF) funded by the Ministry of Education(RS-2023-00242412), the second author is supported by the National Research Foundation of Korea(NRF) grant
funded by the Korea government(MSIT)(2021R1F1A1063187), and the third author is supported by the National Research Foundation of Korea(NRF) grant funded by the Korea government(MSIT) (RS-2023-00275419).}

\subjclass[2020]{Primary 57K10}
\keywords{Heegaard splittings, 3-bridge decompositions, rectangle condition, strong irreducibility}

\maketitle

\begin{abstract}
The rectangle condition for a genus $g$ Heegaard splitting of a 3-manifold, defined by Casson and Gordon in \cite{CG}, provides a sufficient criterion for the Heegaard splitting to be strongly irreducible. However it is unknown whether there exists a strongly irreducible Heegaard splitting which does not satisfy the rectangle condition. In this paper we provide a counterexample of a genus 2 Heegaard splitting of a 3-manifold which is strongly irreducible but fails to satisfy the rectangle condition. The way of constructing such an example is to take a double branched cover of a 3-bridge decomposition of a knot in $S^3$ which is strongly irreducible but does not meet the rectangle condition. This implies that the rectangle condition does not detect the strong irreducibility. As our next goal, we expect that this result provides the weaker version of the rectangle condition which detects the strong irreducibility.
\end{abstract}

\section{Introduction and main results}\label{B1}

A genus $g$ Heegaard splitting of a 3-manifold $M$ is defined to be a triple $(V, W; \Sigma)$, where $\Sigma$ is a genus $g$ Heegaard surface that
decomposes $M$ into two handlebodies $V$ and $W$.
The splitting is said to be \textit{strongly irreducible} if every pair of an essential disk in $V$ and an essential disk in $W$ intersects,
otherwise it is weakly reducible. The notion of the strong irreducibility provides much information on a 3-manifold.
For instance, if a 3-manifold admits a strongly irreducible Heegaard splitting, then it is irreducible and $\partial$-irreducible.
Furthermore, if a genus 2 Heegaard splitting is strongly irreducible, then the Heegaard genus of the 3-manifold is 2.
This is because if a 3-manifold $M$ has a Heegaard genus 0 or 1, then $M$ is either $S^3$ or a lens space,
in which case any genus $n$($n\geq 2$) Heegaard splitting of $S^3$ or a lens space is weakly reducible.

Casson and Gordon \cite{CG} provide a nicely sufficient condition for a Heegaard splitting to be strongly irreducible, called
a rectangle condition. The rectangle condition of a Heegaard splitting $(V, W; \Sigma)$ of a 3-manifold $M$ is defined to be a certain condition
requiring nine types of rectangles on the intersections of two pants decompositions of $\Sigma$ which are obtained from $V$ and $W$ by deleting the maximal collections of
nonisotopic pairwise disjoint essential disks in $V$ and in $W$ respectively. The details of the rectangle condition are given in Section~\ref{B3}.
Casson and Gordon \cite{CG} proved the following.

\begin{Thm}[\cite{CG}]\label{Casson-Gordon}
If a Heegaard splitting of a 3-manifold satisfies the rectangle condition, then it is strongly irreducible.
\end{Thm}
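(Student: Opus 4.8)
The plan is to prove the statement directly by contradiction. Suppose the splitting $(V,W;\Sigma)$ satisfies the rectangle condition but is \emph{not} strongly irreducible; then there are essential disks $D\subset V$ and $E\subset W$ with $\partial D\cap\partial E=\emptyset$, and the goal is to contradict the condition. First I would fix complete meridian systems $\Delta_V,\Delta_W$ whose boundary curves are exactly the two pants decompositions $\mathcal{P}_V,\mathcal{P}_W$ of $\Sigma$ entering the rectangle condition, and isotope $\mathcal{P}_V,\mathcal{P}_W,\partial D,\partial E$ into pairwise minimal (tight) position. The main reduction is to replace each disk by a \emph{wave}: isotoping $D$ to meet $\Delta_V$ minimally and taking an outermost arc of $D\cap\Delta_V$ produces either the degenerate case that $\partial D$ is isotopic to a curve of $\mathcal{P}_V$, or an arc $\alpha\subset\partial D$ whose interior is disjoint from $\mathcal{P}_V$, lying in a single pair of pants $P$ of $\mathcal{P}_V$, with both endpoints on one boundary curve of $P$ and (by minimality, hence essentiality) separating the other two boundary curves of $P$. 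The same argument applied to $E$ and $\Delta_W$ yields either a curve of $\mathcal{P}_W$ or a wave $\beta\subset\partial E$ for $\mathcal{P}_W$ in a pair of pants $Q$. Since $\alpha\subset\partial D$ and $\beta\subset\partial E$, disjointness of $\partial D$ and $\partial E$ forces the two objects produced on the $V$-side and $W$-side to be disjoint.

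The heart of the proof is then the claim that, under the rectangle condition, any curve-or-wave produced on the $V$-side must intersect any curve-or-wave produced on the $W$-side, contradicting disjointness. I would split this into cases. The curve-versus-curve case is immediate: each curve $c\in\mathcal{P}_V$ bounds some pants $P$ and each $d\in\mathcal{P}_W$ bounds some pants $Q$, and a rectangle associated to a band of $P$ incident to $c$ and a band of $Q$ incident to $d$ has a side that is a subarc of $d$ spanning the band at $c$, so that $d$ meets $c$. The curve-versus-wave cases are handled identically, using that a wave $\alpha$ blocks exactly one band of its pants $P$ (the band joining the two boundary curves it separates) and is therefore crossed by every band-arc spanning that band, while the rectangles supply such band-arcs lying on $\mathcal{P}_W$.

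The main obstacle is the wave-versus-wave case, and this is precisely where all nine rectangle types are used. The wave $\alpha$ is associated to one of the three bands of $P$, the wave $\beta$ to one of the three bands of $Q$, and the nine possible pairings match the nine rectangles. For a fixed pairing I would take the corresponding rectangle $R$, a quadrilateral in the complement of $\mathcal{P}_V\cup\mathcal{P}_W$ whose two opposite sides are band-arcs of the band blocked by $\alpha$ and whose other two opposite sides are band-arcs of the band blocked by $\beta$; a local analysis of how $\alpha$ and $\beta$ are forced to traverse $R$ transversely to the sides they must cross then yields $\alpha\cap\beta\neq\emptyset$. I expect the delicate point to be making this local analysis rigorous: verifying that $\alpha$ and $\beta$ really are driven through the \emph{same} rectangle and cannot slip around it, which is what the combination of minimality and the quadrilateral structure of $R$ is meant to guarantee.

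Finally I would dispose of the degenerate cases where $\partial D$ or $\partial E$ is isotopic to a pants curve by the curve-versus-(curve-or-wave) arguments above, and then assemble all cases to conclude $\partial D\cap\partial E\neq\emptyset$, contradicting weak reducibility. The technical care throughout lies in the reduction to waves — ensuring minimal position so that outermost arcs genuinely yield essential waves — and in the bookkeeping of the nine band-pairings in the crux step.
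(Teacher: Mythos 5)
The paper itself contains no proof of this theorem: it is quoted as background from the unpublished Casson--Gordon manuscript \cite{CG}, so there is no in-paper argument to compare yours against. Judged on its own merits, your outline is the standard proof of the rectangle-condition criterion, and its skeleton is sound: reduce each of the two disjoint essential disks, via outermost arcs of its intersection with a complete meridian system, to either a curve isotopic to a pants curve or a wave, and then use the rectangles to force an intersection in each of the curve--curve, curve--wave, and wave--wave cases.

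Two steps must be filled in before this is a complete proof, and both are closable. (a) The wave--wave crux that you yourself flag is settled not by a ``local analysis'' but by a parity argument, which also removes your worry that the waves might ``slip around'' the rectangle. Say $\alpha\subset\partial D$ is a wave based at $a_1$ in the pants $P$ with $\partial P=a_1\cup b_1\cup c_1$, $\beta\subset\partial E$ is a wave based at $a_2$ in $Q$, and $R\subset P\cap Q$ is the rectangle of type $(b_1,c_1,b_2,c_2)$. Each side of $R$ lying on $b_2$ or on $c_2$ is an arc in $P$ joining $b_1$ to $c_1$ whose interior avoids $\partial P$; since $\alpha$ together with a subarc of $a_1$ separates $b_1$ from $c_1$ in $P$, each of these two sides meets $\alpha$ an odd number of times. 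Because the interior of $\alpha$ is disjoint from $\partial P$ and its endpoints lie on $a_1$, which misses $R$ entirely, every component of $\alpha\cap R$ is an arc with both endpoints on the $b_2$-side or the $c_2$-side; by parity an odd number of these components, in particular at least one, runs from the $b_2$-side to the $c_2$-side. Symmetrically some component of $\beta\cap R$ runs from the $b_1$-side to the $c_1$-side, and two arcs in the disk $R$ joining opposite pairs of sides must intersect, contradicting $\partial D\cap\partial E=\emptyset$. (b) You quietly assume that $\partial D$, $\partial E$, $\mathcal{P}_V$, $\mathcal{P}_W$ can be put in pairwise minimal position while keeping $\partial D\cap\partial E=\emptyset$; ad hoc bigon removal on one pair can create intersections with the other curves, so this needs a justification --- the clean fix is to take geodesic representatives in a hyperbolic metric on $\Sigma$ (with a parallel push-off when two of the curves are isotopic). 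The same device repairs your degenerate cases: when $D$ is disjoint from $\Delta_V$, its boundary is only \emph{isotopic} to a pants curve $a_1$, so disjointness from $\partial E$ transfers to $a_1$ only through the vanishing of the geometric intersection number, and one must re-realize $\partial E$ (or its wave) disjoint from $a_1$ while keeping it minimal with respect to $\mathcal{P}_W$ before running your curve--curve and curve--wave arguments. With these two repairs, your plan is a correct and complete proof.
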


Consequently, for the case of a genus 2 Heegaard splitting, if a genus 2 Heegaard splitting of a 3-manifold satisfies the rectangle condition, then its Heegaard
genus is 2.

The notions and properties of the strong irreducibility and the rectangle condition of a Heegaard splitting of a 3-manifold can be transferred to a bridge decomposition of a knot in $S^3$.
An $n$\textit{-bridge decomposition} of a knot $K$ is defined to be a triple $(T_1, T_2;S)$, where $S$ is a bridge 2-sphere in $S^3$ which decomposes the pair $(S^3, K)$ into two rational $n$-tangles $T_1=(B_1, \tau_1)$ and $T_2=(B_2, \tau_2)$. Then a knot $K$ is an $n$-\textit{bridge knot} if it has an $n$-bridge decomposition but no $(n-1)$-bridge decomposition.
An $n$-bridge decomposition $(T_1, T_2;S)$ of a knot $K$ is strongly irreducible if every pair of a compression in $T_1$ and a compression in $T_2$ intersects.
A compression in a rational tangle $T=(B^3, \tau)$ is an essential disk whose boundary does not bound a disk nor a once-punctured disk in $\partial B-\tau$.
In \cite{K20} the first author defined the rectangle condition of an $n$\textit{-bridge decomposition} $(T_1, T_2;S)$ of a knot $K$ as
a condition on the intersections of two pants decompositions of $S$ which are obtained from $T_1$ and $T_2$ by deleting the maximal collections of
nonisotopic pairwise disjoint compressions in $T_1$ and $T_2$ respectively.
The first author \cite{K20} showed that the rectangle condition of a bridge decomposition of a knot in $S^3$ guarantees the strong irreducibility.

\begin{Thm}[\cite{K20}]\label{Kwon2}
If an $n$-bridge decomposition of a knot in $S^3$ satisfies the rectangle condition, then it is strongly irreducible.
\end{Thm}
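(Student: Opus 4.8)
The plan is to follow the strategy of Casson and Gordon's proof of Theorem~\ref{Casson-Gordon}, transported to the punctured sphere $S$. First I would fix the maximal collections $\Delta_1 \subset T_1$ and $\Delta_2 \subset T_2$ of compressions used to define the two pants decompositions $P_1$ and $P_2$ of $S$, and isotope them so that $P_1$ and $P_2$ meet minimally (transversely, with no bigons) on the $2n$-punctured sphere. By definition the rectangle condition then supplies, for every pair of pants $Q$ of $P_1$, every pair of pants $Q'$ of $P_2$, and every choice of a pair of boundary curves of $Q$ and a pair of boundary curves of $Q'$, a \emph{rectangle}: an embedded disk $R \subset Q \cap Q'$ two of whose opposite sides lie on the two chosen boundary curves of $Q$ and whose other two opposite sides lie on the two chosen boundary curves of $Q'$.

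I would argue by contradiction. Suppose there are compressions $D_1 \subset T_1$ and $D_2 \subset T_2$ with $\partial D_1 \cap \partial D_2 = \emptyset$, and isotope each $\partial D_i$ into minimal position with respect to both pants decompositions. The first key step is a wave lemma: since $\Delta_1$ is a maximal collection of compressions, $D_1$ may be isotoped so that $D_1 \cap \Delta_1$ consists of arcs only, and an outermost such arc on $D_1$ cuts off a subdisk whose boundary arc $b_1 \subset \partial D_1$ is a wave for $P_1$ --- an essential arc lying in a single pair of pants $Q_1$ of $P_1$ with both endpoints on one boundary curve of $Q_1$. (If $b_1$ were inessential or cut off a once-punctured disk, one would either reduce $|D_1 \cap \Delta_1|$ or produce a compression disjoint from and non-isotopic to $\Delta_1$, contradicting minimality or maximality.) The same argument applied to $D_2$ and $\Delta_2$ yields a wave $b_2 \subset \partial D_2$ in a pair of pants $Q_2$ of $P_2$. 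Since $\partial D_1 \cap \partial D_2 = \emptyset$, in particular $b_1 \cap b_2 = \emptyset$.

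The second key step turns the rectangle condition into a forced intersection. A wave in a pair of pants with both endpoints on one boundary curve separates the other two boundary curves, so $b_1$ separates a pair $\{a, a'\}$ of boundary curves of $Q_1$ and $b_2$ separates a pair $\{e, e'\}$ of boundary curves of $Q_2$. Applying the rectangle condition to $(Q_1, Q_2)$ with these two pairs yields a rectangle $R$ with opposite sides on $a, a'$ and opposite sides on $e, e'$. Because $b_1$ is disjoint from $a$ and $a'$ but separates them inside $Q_1 \supset R$, the sides of $R$ on $a$ and on $a'$ lie in different components of $Q_1 \setminus b_1$, so some component of $b_1 \cap R$ separates them and hence joins the side of $R$ on $e$ to the side on $e'$; symmetrically some component of $b_2 \cap R$ joins the side on $a$ to the side on $a'$. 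These two arcs run between opposite pairs of sides of the disk $R$ and therefore must intersect, giving $b_1 \cap b_2 \neq \emptyset$, a contradiction. Hence every pair of compressions meets and the bridge decomposition is strongly irreducible.

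I expect the main obstacle to be the wave lemma in the punctured setting, specifically verifying that the wave $b_i$ can be taken essential while avoiding the degenerate cases introduced by the punctures (a subarc of $\partial D_i$ cutting off a once-punctured disk rather than a genuine wave), and the bookkeeping needed to confirm that the pair of boundary curves separated by each wave is exactly one of the pairs for which the rectangle condition guarantees a rectangle. Care is also required at the outset to justify that $P_1$, $P_2$, and the boundaries $\partial D_i$ can be placed simultaneously in minimal position without creating bigons that would invalidate the outermost-arc and rectangle-crossing arguments.
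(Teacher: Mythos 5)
The paper never proves Theorem~\ref{Kwon2}; it is quoted from \cite{K20}, so your proposal can only be compared with the argument that reference is known to follow, namely Casson and Gordon's wave-plus-rectangle scheme transported to tangles. Your skeleton matches that scheme: extract a wave of each compression with respect to the chosen systems, note that a wave based at one boundary curve of a pair of pants separates the other two, and use the rectangle guaranteed for the two separated pairs to force a crossing (an arc of $b_1\cap R$ must join the $e$-side to the $e'$-side, an arc of $b_2\cap R$ must join the $a$-side to the $a'$-side, and two such arcs in a disk intersect). That second, combinatorial half of your argument is correct as written.

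The genuine gap is in the wave lemma, exactly where you flag ``the main obstacle,'' and the mechanism in your parenthetical does not close it. Unlike the handlebody case of Theorem~\ref{Casson-Gordon}, where a maximal disk system cuts the surface into pairs of pants only, a system of compressions cuts the bridge sphere into $(n-2)$ pairs of pants \emph{and} $n$ twice-punctured disks. An outermost-arc surgery on $D_1$ can a priori produce a subarc $b_1$ of $\partial D_1$ lying in a twice-punctured disk component and separating its two punctures. This configuration is immune to your proposed contradictions: $b_1$ is essential there (no bigon), so minimality of $|D_1\cap \Delta_1|$ is untouched, and the disk obtained by surgering the system disk along the outermost subdisk of $D_1$ has boundary bounding a \emph{once-punctured} disk in $S$, hence is not a compression at all, so maximality of $\Delta_1$ is not contradicted either. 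Worse, for such a $b_1$ the rectangle-crossing step cannot even start, since the rectangle condition only supplies rectangles with sides on compression curves and says nothing about arcs separating punctures. The case must instead be killed by a parity argument special to tangles: capping the surgered disk with the once-punctured disk its boundary bounds in $S$ yields an embedded $2$-sphere meeting the knot transversally in a single point, which is impossible. Separately, your contradiction setup only produces waves when $D_i$ actually meets $\Delta_i$; the degenerate case where a compression is disjoint from its system (hence isotopic to a system curve) is missing, though it follows easily --- the corners of a guaranteed rectangle force two system curves to intersect, and the same crossing argument handles one system curve against one wave.
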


As a corollary of Theorem~\ref{Kwon2} in \cite{K20}, for the case of 3-bridge decomposition of a knot, the first author showed that
if a $3$-bridge decomposition of a knot satisfies the rectangle condition, then the knot is a 3-bridge knot.
Using this result, in \cite{KK21} and \cite{KK23}, the first and the second authors constructed infinitely many families of alternating 3-bridge prime knots which have 3-bridge decompositions satisfying the rectangle condition.

It is natural to investigate how closely the rectangle condition is equivalent to the strong irreducibility.
Along this line, in \cite{KL20}, the first and third authors show that the rectangle condition consisting of nine types of rectangles in some combination is sharp by showing that there exists a 3-bridge decomposition which has eight types of rectangles and also is not strongly irreducible.

\begin{Thm}[\cite{KL20}]\label{KL20}
There exist 3-bridge decompositions of knots in $S^3$ which admit a bridge diagram with eight types of rectangles but are not strongly irreducible.
\end{Thm}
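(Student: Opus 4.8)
The statement is an existence claim, so the plan is to exhibit an explicit $3$-bridge decomposition $(T_1,T_2;S)$ together with (i) a direct count of rectangles showing that its bridge diagram realizes exactly eight of the nine types, and (ii) an explicit pair of disjoint compressions, one in each tangle, witnessing the failure of strong irreducibility. Since Theorem~\ref{Kwon2} shows that all nine types would force strong irreducibility, producing such a decomposition proves that eight types do not suffice, i.e. that the nine-type rectangle condition is sharp. The argument is therefore constructive plus a finite verification, not a structural implication.

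First I would set up the combinatorial model. A $3$-bridge sphere $S$ is a sphere with six marked points, and deleting a maximal system of three pairwise nonisotopic compressions from each tangle $T_i$ yields a pants decomposition $\mathcal{P}_i$ of $S$ into four pairs of pants. I would design a bridge diagram so that the two pants systems $\mathcal{P}_1$ and $\mathcal{P}_2$ are drawn explicitly on $S$ in minimal position, making their arcs of intersection and the resulting bands visible. The nine types of rectangles correspond to the nine pairings of a band in $\mathcal{P}_1$ with a band in $\mathcal{P}_2$ across a pair of pants; with an explicit diagram each type can be checked by inspecting whether a rectangle, i.e. an embedded disk whose boundary alternates between an arc on a $\mathcal{P}_1$-curve and an arc on a $\mathcal{P}_2$-curve, is present.

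To control the example I would tune the tangle so that all pairings are realized except one distinguished pairing, and arrange the missing rectangle to lie precisely where a compressing disk of $T_1$ and a compressing disk of $T_2$ can be made disjoint. Concretely, I expect the obstruction to the ninth type and the pair of disjoint compressions to be two faces of the same feature of the diagram: the presence of two essential arcs, one on each side, that run parallel along the relevant cuffs and hence cobound no rectangle, while their associated disks cobound a disjointness on $S$. Exhibiting the disjoint disks then shows directly, from the definition of strong irreducibility for a bridge decomposition, that the decomposition is not strongly irreducible, completing (ii).

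The main obstacle is the simultaneity of the two requirements. I must keep the eight present rectangle types genuinely present after whatever modification kills the ninth, and I must confirm that the two compressions are essential in the tangle sense, with boundaries bounding neither a disk nor a once-punctured disk in $S$ minus the strands, and are truly disjoint rather than merely disjoint up to an isotopy that would destroy one of the eight surviving rectangles. The verification of eight-versus-nine is an exhaustive but finite check on the explicit diagram, so the real work is engineering a single bridge diagram in which this delicate balance holds; I would expect to present the example by a labeled figure and then carry out the rectangle count and the disjointness argument case by case on that figure.
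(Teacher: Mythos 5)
There is a genuine gap here: your proposal is a plan for a proof, not a proof. The statement is an existence claim, and in the paper it is not proved at all but quoted from the reference \cite{KL20}, where the content of the proof is precisely an explicit example together with the two verifications you describe. Your text correctly identifies what such an example must satisfy (eight of the nine rectangle types realized in some bridge diagram, plus a pair of disjoint compressions, with Theorem~\ref{Kwon2} explaining why this establishes sharpness), but it never produces the example: no knot or tangle pair is specified, no bridge diagram is given, the eight rectangles are never listed, and the disjoint compressions are never exhibited. Every step that constitutes the actual mathematical content is deferred (``I would design\ldots'', ``I would tune\ldots'', ``the real work is engineering a single bridge diagram in which this delicate balance holds''). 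For an existence theorem proved by construction, that deferred work \emph{is} the proof, so what you have written cannot be completed into a proof without supplying exactly the ingredient it omits.

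Two further points. First, your guiding heuristic --- that the missing ninth rectangle and the disjoint pair of compressions should be ``two faces of the same feature'' of the diagram --- is unsupported and is not something you can rely on: the absence of one rectangle type in one particular pair of pants systems carries no general implication about the existence of disjoint compressions (indeed, the whole point of the present paper's Theorem~\ref{main result3} is that a decomposition can fail the rectangle condition in \emph{every} diagram and still be strongly irreducible). Weak reducibility has to be verified independently, for instance by recognizing the knot as a $2$-bridge knot or a connected sum (cf.\ Theorem~\ref{main result-intro3}) or by an explicit pair of disjoint disks. Second, a small technical slip: for a $3$-bridge decomposition, cutting the bridge sphere along the three compression boundaries yields one pair of pants with three circle boundary components (the surface $S_{\mathcal{D}_i}$ of Definition~\ref{for a 3-bridge decomposition using compressions}) together with three twice-punctured disks; the nine rectangle types live between the two genuine pants components, one from each side, not among ``four pairs of pants'' on each side.
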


The goal of this paper is to show that the converses of Theorem~\ref{Casson-Gordon} and Theorem~\ref{Kwon2} are not true.
Since the definition of the rectangle condition is so complicated, one might naturally expect the converse to fail.
However it is still unknown whether or not there is a counterexample disproving the converse of Theorem~\ref{Casson-Gordon} and Theorem~\ref{Kwon2}.
Actually it is not easy to verify that a Heegaard splitting of a 3-manifold or a bridge decomposition
of a knot doesn't satisfy the rectangle condition because we need to consider all of the pants decompositions of the Heegaard splitting or a bridge decomposition.

To attain our goal that the converses of Theorem~\ref{Casson-Gordon} and Theorem~\ref{Kwon2} are not true, first we show that both definitions of the strong irreducibility and the rectangle condition in a Heegaard splitting of a 3-manifold and in a bridge decomposition
of a knot are equivalent. As described above, there are many similarities between the theories of Heegaard splittings of 3-manifolds and bridge decompositions
of knots such as the strong irreducibility and the rectangle condition. In fact, they have a close relationship in the sense that the double branched cover of an $n$-bridge decomposition of a knot is
a genus $(n-1)$ Heegaard splitting of a 3-manifold. As our some results of this paper, we have the following theorems which are given in Section~\ref{B3} as Theorems~\ref{main result-intro-1} and ~\ref{main result-intro-2}.

\begin{Thm}\label{main result-2}
Let $(T_1, T_2; S)$ be a 3-bridge decomposition of a knot $K$ in $S^3$ and
$(V, W;\Sigma)$ the genus 2 Heegaard splitting obtained by the double branched covering of
$(T_1, T_2; S)$. Then $(V, W;\Sigma)$ is strongly irreducible if and only if $(T_1, T_2; S)$
is strongly irreducible.
\end{Thm}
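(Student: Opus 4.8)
The plan is to analyze the covering involution $\iota$ of the double branched cover $p\colon (M,\tilde K)\to (S^3,K)$, where $\tilde K=p^{-1}(K)$ is the branch locus, and to transport disks back and forth across $p$. Here $V$ and $W$ are the double branched covers of the rational tangles $(B_1,\tau_1)$ and $(B_2,\tau_2)$, each a genus $2$ handlebody, and $\Sigma=p^{-1}(S)$ is the genus $2$ Heegaard surface, on which $\iota$ restricts to the hyperelliptic involution whose six fixed points are $\tilde K\cap\Sigma=p^{-1}(K\cap S)$. The branched double cover $\Sigma\to S$ has these six points as branch points, and the basic dictionary I will use is the parity rule: a simple closed curve on the six-punctured sphere $S-(K\cap S)$ lifts to two disjoint curves or to a single curve according as it encloses an even or an odd number of branch points.

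First I would establish the \emph{lifting correspondence}. A compression $D$ in $T_i$ is disjoint from $\tau_i$, so each strand of $\tau_i$ lies entirely on one side of $D$; hence $\partial D$ encloses an even number of branch points, so by the parity rule $\partial D$ lifts to two disjoint curves on $\Sigma$ and $D$ lifts to two disjoint disks $\tilde D_1,\tilde D_2$ in the corresponding handlebody, interchanged by $\iota$. Using that an unpunctured or once-punctured subdisk of $S$ lifts to a disk in $\Sigma$, one checks that $\tilde D_1,\tilde D_2$ are essential precisely when $\partial D$ is essential in the punctured sphere, i.e.\ when $D$ is a compression. Thus compressions in $T_i$ correspond to $\iota$-equivariant \emph{swapped pairs} of disjoint essential disks, and, conversely, such a swapped pair (being disjoint from the fixed-point set) projects to a compression.

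For the implication from the bridge decomposition to the splitting I argue by contraposition, and this is the easy direction. If $T_1$ and $T_2$ carry disjoint compressions $D_1,D_2$, lifting each as above yields a swapped pair in $V$ and a swapped pair in $W$; since $D_1\cap D_2=\emptyset$ and both compressions avoid $K$, the map $p$ is an honest double cover over a neighbourhood of $D_1\cup D_2$, so the four lifted disks are pairwise disjoint. Choosing one lift in $V$ and one in $W$ exhibits disjoint essential disks, so the splitting is weakly reducible. Equivalently, strong irreducibility of $(V,W;\Sigma)$ implies that of $(T_1,T_2;S)$.

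For the reverse implication I must show that weak reducibility of the splitting forces weak reducibility of the tangle decomposition. Starting from disjoint essential disks $E\subset V$, $F\subset W$, the main tool is the equivariant disk theorem (Meeks--Yau), applied to the finite-order diffeomorphism $\iota$, which I would use to replace the reducing pair by an $\iota$-equivariant one. An equivariant essential disk in the double branched cover of a rational tangle is either a swapped pair, which projects to a compression, or a single \emph{invariant} disk meeting a fixed arc once, whose boundary encloses an odd number of branch points and which projects only to a meridional disk meeting $\tau_i$ once. If both members of the equivariant pair are of swapped type, projection immediately produces disjoint compressions. \textbf{The main obstacle is the invariant case}: an invariant disk cannot be isotoped off the fixed arcs, so it does not directly descend to a compression, and one cannot naively push the splitting's weak reducibility down to the tangle. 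I expect to resolve this by a surgery/trading argument: the boundary of an invariant disk encloses three of the six branch points, and I would surger it, by an innermost-disk argument against the disjoint complementary disk and using the hyperelliptic symmetry together with the triviality of the rational tangles, into curves enclosing two branch points, i.e.\ into compression lifts, producing an equivariant swapped-type reducing pair. Verifying that this trade keeps the two disks in their respective handlebodies, preserves disjointness, and lands on genuine compressions is the crux of the argument and the step requiring the most care.
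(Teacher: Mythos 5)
Your easy direction (disjoint compressions lift to disjoint essential disks, so strong irreducibility upstairs implies strong irreducibility downstairs) is exactly the paper's argument. The problem is the reverse implication, where your proposal has two genuine gaps. First, the step ``use Meeks--Yau to replace the weakly reducing pair by an $\iota$-equivariant one'' is not an application of the equivariant disk theorem: that theorem produces a single equivariant compressing disk for an invariant compressible boundary (so it would give you an equivariant essential disk in $V$, and separately one in $W$), but it gives no control whatsoever making a disk in $V$ and a disk in $W$ disjoint \emph{from each other}. Equivariant weak reducibility is strictly stronger than weak reducibility together with equivariance of the two handlebodies, and you offer no argument for it. Second, even granting an equivariant pair, you explicitly leave the invariant-disk case unresolved (``I expect to resolve this by a surgery/trading argument \dots the crux of the argument''), and the operation you sketch does not parse: you propose an innermost-disk argument of the invariant disk ``against the disjoint complementary disk,'' but the two disks of a weakly reducing pair are disjoint, so there are no intersection curves on which to run such an argument. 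An invariant essential disk in $V$ is separating, projects to a disk meeting $\tau_1$ in one point whose boundary encloses three branch points, and there is no evident local move trading it for a swapped pair while preserving disjointness from the disk in $W$. So the crux of your hard direction is missing, not merely delicate.

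It is worth seeing how the paper sidesteps all of this by exploiting the genus-2 setting globally rather than working disk-by-disk: a weakly reducible genus 2 Heegaard splitting is reducible, hence $(V,W;\Sigma)$ is a connected sum of genus 1 splittings of $S^3$ and/or lens spaces; correspondingly $(T_1,T_2;S)$ is a 3-bridge decomposition of the unknot, of a 2-bridge knot, or of a connected sum of two 2-bridge knots, and every 3-bridge decomposition of such a knot is weakly reducible, contradicting the hypothesis. If you want to salvage your equivariant route, you would need (a) an equivariant weak reduction statement and (b) an honest resolution of the invariant case; both are serious, and it is quite possible that any proof of them would pass through the same classification facts the paper invokes.
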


\begin{Thm}\label{main result-1}
Let $(T_1, T_2; S)$ be a 3-bridge decomposition of a knot $K$ in $S^3$ and
$(V, W;\Sigma)$ the genus 2 Heegaard splitting obtained by the double branched covering of
$(T_1, T_2; S)$. Then $(V, W;\Sigma)$ satisfies the rectangle condition if and only if $(T_1, T_2; S)$ satisfies the rectangle condition.
\end{Thm}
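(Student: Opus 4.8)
The plan is to transport the rectangle condition across the double branched covering $p\colon \Sigma \to S$ branched over the six points $K\cap S$, with deck involution $\iota$ generating the covering group. The key structural observation is that, away from the branch points, $p$ restricts to an honest (unbranched) double cover $\Sigma_{6}=\Sigma\setminus p^{-1}(K\cap S)\to S_{6}$ of the six-punctured sphere $S_{6}=S\setminus K$, and it is this feature that makes arcs, intersection points, and rectangles correspond downstairs and upstairs. Following the same philosophy as in Theorem~\ref{main result-2}, I would first match the two pieces of data on which the definition in Section~\ref{B3} rests --- the disk/compression systems and the pants decompositions they induce --- and then match the rectangles themselves; the forward and reverse matchings together yield the biconditional.

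For the first step I would show that a compression $c$ in $T_{i}$ has boundary $\partial c\subset S_{6}$ separating the six branch points into groups of two and four (each strand lies entirely on one side), so $\partial c$ carries trivial $\mathbb{Z}/2$ monodromy and $p^{-1}(\partial c)=\tilde c_{1}\sqcup \tilde c_{2}$ is a pair of disjoint curves interchanged by $\iota$; since $c$ misses the strands it lifts to two parallel essential meridian disks of the relevant handlebody, and $\tilde c_{1},\tilde c_{2}$ cobound the annulus which is the lift of the twice-punctured pair of pants cut off by $c$, so they are isotopic in $\Sigma$. Applying this to a maximal compression system shows that the pants decomposition of $S_{6}$ obtained from $T_{i}$ lifts, after collapsing these annuli, to the pants decomposition of $\Sigma$ obtained from the corresponding handlebody (the single unpunctured ``central'' pair of pants downstairs lifting to the two pairs of pants of the genus two decomposition). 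The direction ``$(T_{1},T_{2};S)$ satisfies the rectangle condition $\Rightarrow$ $(V,W;\Sigma)$ does'' then needs only this lifting. For the converse I would invoke equivariance: a complete meridian system of $V$ (resp.\ $W$) can be isotoped to be $\iota$-equivariant, and the disks disjoint from their $\iota$-images project to compressions of $T_{1}$ (resp.\ $T_{2}$), recovering a maximal compression system whose pants decomposition is the one lifted above.

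For the second step I would place the two pants decompositions in minimal position downstairs and verify that minimal position passes to the lifts --- an innermost bigon upstairs would project to a bigon downstairs away from the branch points, contradicting tightness below --- so the lifted systems realize the rectangle condition in the sense required upstairs. Because a rectangle is an embedded square cut out by four arcs of the two curve systems and may be taken to avoid the branch points, $p^{-1}(\text{rectangle})$ consists of two disjoint squares, each a rectangle for the lifted systems, while the image of any rectangle upstairs is a rectangle downstairs. Hence a rectangle of a prescribed type occurs below if and only if a rectangle of the corresponding type occurs above, and assembling this over all relevant pairs of pants and all nine types gives the equivalence.

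The step I expect to be the main obstacle is the precise bookkeeping of the nine \emph{types} across the cover, together with the equivariance needed for the converse. Downstairs the pants decomposition of $S_{6}$ has four pieces --- one central and three twice-punctured --- whereas each handlebody contributes only two pairs of pants to $\Sigma$, the central piece lifting to the two upstairs pairs of pants (swapped by $\iota$) and the punctured pieces lifting to the collapsing annuli; reconciling the resulting counts of pairs $(P,Q)$ and checking that the branch-point-adjacent types match correctly, so that ``all nine types for every pair'' below is genuinely equivalent to ``all nine types for every pair'' above, is where the deck involution $\iota$ must be used to fold the upstairs data onto what is seen downstairs. Establishing that a complete meridian system satisfying the rectangle condition upstairs can be taken $\iota$-equivariant (so that it descends to a compression system below) is the delicate point that makes the ``only if'' direction work, and it is here that an equivariant disk argument will be required.
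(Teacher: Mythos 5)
Your ``if'' direction coincides with the paper's: compressions, the pants decompositions they induce, and rectangles all lift through the branched cover, and that is essentially all the paper uses there. The genuine gap is in the converse, and it is not where you placed it. You reduce the ``only if'' direction to making the upstairs disk systems $\iota$-equivariant, but equivariance alone cannot succeed, because the three-disk system in $V$ (or $W$) realizing the rectangle condition may contain a \emph{separating} essential disk, and a separating disk never descends to a compression. Indeed, if a separating disk $D$ could be isotoped off $\iota(D)$, then $\partial D$ would project to an embedded curve in $S$ with trivial $\mathbb{Z}/2$-monodromy, hence enclosing two or four branch points; but each preimage component of such a curve is nonseparating in $\Sigma$ (the two components cobound the annulus covering the twice-punctured disk), contradicting that $\partial D$ separates. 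So the best one can arrange is $\iota(D)=D$, in which case $D$ meets the fixed arcs in a point and descends to a \emph{once-punctured} disk in the tangle, which is excluded from being a compression by definition. No equivariant isotopy repairs this; one must \emph{replace} the separating disk by a nonseparating one while preserving the rectangle condition, and that replacement is exactly the paper's Lemma~\ref{separating disk with rectangle condition}, whose proof (normalizing by homeomorphisms and Dehn twists, then explicitly constructing the substitute disk and re-verifying the rectangle types) is the main work of this direction and is absent from your plan.

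Even after one restricts to nonseparating disks, the step you defer as ``an equivariant disk argument will be required'' is precisely the other half of the paper's proof: Lemma~\ref{nonseparating disks to compressions} shows that every nonseparating essential disk can be isotoped off the fixed arcs so that it descends to a compression, and it is proved not by a general equivariance theorem but by an induction using disk surgery and band sums (Lemma~\ref{nonseparating disks by bandsum}). By contrast, the issue you single out as the main obstacle --- matching the nine types over the four pairs of pants upstairs against the single pair downstairs --- is comparatively minor, though you are right that it deserves care, since each downstairs rectangle lifts into only two of the four upstairs pairs; the paper passes over it with the same lifting/projection observation you describe. In short, your outline is adequate for the ``if'' direction, but for the ``only if'' direction it defers the two lemmas that constitute the paper's actual argument and overlooks the separating-disk obstruction that makes a purely equivariant strategy fail.
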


Next, we present a counterexample of a 3-bridge decomposition $(T_1, T_2; S)$ of a knot $K$ which is strongly irreducible but does not satisfy the rectangle condition.
It follows from Theorems~\ref{main result-2} and ~\ref{main result-1} that the double branched cover of $(T_1, T_2; S)$ of a knot $K$
yields a genus 2 Heegaard splitting of a 3-manifold which is strongly irreducible but does not satisfy the rectangle condition.
The following are the main results of this paper which follow from Theorems~\ref{main result3} and ~\ref{main result4} in Section~\ref{B4}.

\begin{Thm}\label{main result}
There exists a 3-bridge decomposition of a prime knot in $S^3$ which is strongly irreducible but does not satisfy the rectangle condition.
\end{Thm}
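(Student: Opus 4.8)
The plan is to produce an explicit example rather than argue abstractly, since the statement is purely existential and the rectangle condition is too combinatorial to rule out by soft means. Concretely, I would fix a bridge diagram of a 3-bridge decomposition $(T_1, T_2; S)$ by prescribing the two rational $3$-tangles through a plat presentation, so that the six punctures of the bridge sphere $S$ and the two pants decompositions of $S$ --- the ones obtained by deleting a maximal collection of pairwise non-isotopic compressions in $T_1$ and in $T_2$ --- are drawn explicitly. All subsequent verification then becomes a matter of reading intersection data off this diagram and supplementing it with a structural argument. Note that the theorem as stated concerns only the bridge decomposition; the genus $2$ Heegaard counterexample is immediate afterward from Theorems~\ref{main result-2} and \ref{main result-1}.

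Second, I would verify that the rectangle condition fails. With the two pants decompositions in hand, the nine rectangle types reduce to a finite incidence check: for each pair consisting of one pair of pants on the $T_1$-side and one on the $T_2$-side, one records which of the rectangles spanning pairs of boundary arcs actually occur. The design goal is that exactly one type is absent while the diagram remains as filling as possible otherwise, placing the example in the regime just beyond Theorem~\ref{KL20}, where eight types are present. A subtlety to address is that a maximal compression system of a rational tangle need not be unique up to isotopy, so I would either argue that the chosen tangle is rigid enough that its maximal system is essentially forced, or show the missing rectangle persists across all admissible cut systems.

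The main obstacle is proving strong irreducibility, i.e.\ that no compression in $T_1$ is disjoint from any compression in $T_2$. This cannot be settled by checking finitely many pairs, since each rational $3$-tangle carries infinitely many isotopy classes of compressing disks. I would instead characterize the \emph{disk set} of each tangle --- the set of boundary curves on the $6$-punctured sphere of its essential disks --- and show that every curve in the disk set of $T_1$ meets every curve in the disk set of $T_2$. The eight present rectangle types already force intersection for all but a controlled family of potential disjoint pairs; the crux, and precisely the place where Theorem~\ref{KL20} warns that eight types alone do not suffice, is to rule out the single family that the missing ninth rectangle would otherwise permit. Here I expect to use something specific to the chosen tangle: the candidate curve that could carry a disjoint compression on one side is not in fact the boundary of an essential disk there (it bounds a disk or a once-punctured disk in $S-\tau$, or is isotopic into $S$), so the dangerous disjoint pair never materializes. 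Establishing this rigidity for the explicit tangle is the heart of the argument.

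Finally, I would confirm that the underlying knot is prime and genuinely $3$-bridge. Primeness can be obtained either directly, by identifying the knot from the diagram and invoking its known primeness, or structurally, since a connected-sum sphere would produce disjoint compressions in the two tangles and thereby contradict the strong irreducibility just established; that the knot is exactly $3$-bridge follows because a strongly irreducible $3$-bridge decomposition admits no $2$-bridge reduction. Combining the failure of the rectangle condition with strong irreducibility and primeness yields the desired $3$-bridge decomposition, which is the content of Theorems~\ref{main result3} and \ref{main result4}, and in turn disproves the converses of Theorems~\ref{Casson-Gordon} and \ref{Kwon2}.
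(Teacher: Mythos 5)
Your overall strategy --- exhibit an explicit 3-bridge decomposition, show the rectangle condition fails, and establish strong irreducibility --- matches the paper's in outline (the paper uses the knot $8_5$), but both of your two load-bearing steps have genuine gaps. First, on the failure of the rectangle condition: the condition is existential over pairs of compression systems, and by Corollary~\ref{infinitely many systems of bridge arcs} \emph{every} rational 3-tangle admits infinitely many non-isotopic systems of compressions (equivalently, of bridge arcs), so your first fallback --- that the chosen tangle is ``rigid enough that its maximal system is essentially forced'' --- is not available for any example; no rational 3-tangle has an essentially unique cut system. Your second fallback, ``show the missing rectangle persists across all admissible cut systems,'' is precisely the hard problem, and you propose no mechanism for quantifying over this infinite family. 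The paper's mechanism is the normal-form machinery: by Theorem~\ref{unique normal form} any bridge-arc system for the trivial tangle $T_\ep$ not isotopic to $\ep$ fails to be in normal form, hence has adjacent intersection points, whose possible configurations are pinned down by Lemma~\ref{adjacent intersections}; combined with Lemma~\ref{any bridge arc connects} (a consequence of the rectangle condition forcing every bridge arc of $T_1$ to meet every essential arc based at a $\be_i$), a case analysis on the explicit arcs $\del_1,\del_2,\del_3$ of $8_5$ yields a contradiction in all cases. Without some substitute for this, your proof does not get off the ground.

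Second, on strong irreducibility: you identify it as ``the main obstacle'' and propose to characterize the full disk set of each tangle and check that all pairs of curves intersect, but you leave the crucial rigidity claim (``the candidate curve ... is not in fact the boundary of an essential disk there'') entirely unargued; this is as hard as proving strong irreducibility from scratch. The paper avoids this altogether via Theorem~\ref{main result-intro3}: for a 3-bridge decomposition of a 3-bridge knot, strong irreducibility is \emph{equivalent} to primeness, so the known primeness (and 3-bridgeness) of $8_5$ gives strong irreducibility immediately; the nontrivial direction rests on the classical result that a decomposing sphere of a composite knot can be isotoped to meet the bridge sphere in one circle. Notably, you invoke the easy direction of this equivalence (strong irreducibility implies prime) at the end, but you miss that the converse direction is what lets one bypass the disk-set analysis entirely. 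As written, your proposal therefore has two unproven cores, and the one tool that would repair the second (the primeness criterion) is already implicit in your last paragraph but pointed in the wrong direction.
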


\begin{Thm}\label{main result2}
There exists a genus $2$ Heegaard splitting of a 3-manifold which is strongly irreducible but does not satisfy the rectangle condition.
\end{Thm}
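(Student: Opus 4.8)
The plan is to obtain this statement as a formal consequence of the three theorems already assembled above, so that no new geometric analysis is needed at this stage. First I would invoke Theorem~\ref{main result} to fix a 3-bridge decomposition $(T_1, T_2; S)$ of a prime knot $K$ in $S^3$ that is strongly irreducible yet fails the rectangle condition. Since the double branched cover of an $n$-bridge decomposition is a genus $(n-1)$ Heegaard splitting, taking the double cover of $S^3$ branched over $K$ yields a genus $2$ Heegaard splitting $(V, W; \Sigma)$ of a closed 3-manifold $M$, where $\Sigma$ is the double branched cover of the bridge sphere $S$ and where $V$ and $W$ are the double branched covers of the two rational 3-tangles $T_1$ and $T_2$. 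This is precisely the setup to which Theorems~\ref{main result-2} and~\ref{main result-1} apply.

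With this splitting in hand, the two desired properties transfer through the equivalences established earlier. By Theorem~\ref{main result-2}, the strong irreducibility of $(T_1, T_2; S)$ forces $(V, W; \Sigma)$ to be strongly irreducible. By Theorem~\ref{main result-1}, the failure of the rectangle condition for $(T_1, T_2; S)$ forces $(V, W; \Sigma)$ to fail the rectangle condition as well. Combining these two implications, $(V, W; \Sigma)$ is a genus $2$ Heegaard splitting of $M$ that is simultaneously strongly irreducible and noncompliant with the rectangle condition, which is exactly the assertion to be proved.

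Because the deduction itself is purely formal, the genuine difficulty — and hence the only real obstacle — resides entirely in the cited ingredients rather than in the present argument. The hard part is twofold. On one side lie the equivalences of Theorems~\ref{main result-2} and~\ref{main result-1}: one must show that neither strong irreducibility nor the status of the rectangle condition can be created or destroyed by passing to the double branched cover, which in particular for the rectangle condition requires controlling \emph{all} admissible pants decompositions of $\Sigma$ against those of $S$. On the other side lies the explicit counterexample of Theorem~\ref{main result}, where one must exhibit a concrete 3-bridge decomposition whose compressions in $T_1$ and $T_2$ always intersect while verifying, over every choice of pants decompositions of the bridge sphere, that at least one of the nine rectangle types is missing. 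Once these facts are in place, the present theorem follows with no further work.
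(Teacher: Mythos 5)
Your proposal is correct and follows essentially the same route as the paper: the paper also deduces this theorem by combining its counterexample 3-bridge decomposition (the knot $8_5$, Theorem~\ref{main result3}, which underlies Theorem~\ref{main result}) with the double-branched-cover equivalences for strong irreducibility and the rectangle condition (Theorems~\ref{main result-intro-2} and~\ref{main result-intro-1}, i.e.\ Theorems~\ref{main result-2} and~\ref{main result-1}). You correctly identify that the deduction is purely formal and that all the substance lies in the cited ingredients.
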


Theorems~\ref{main result} and ~\ref{main result2} imply the rectangle condition doesn't detect strong irreducibility.
Actually the knot in Theorem~\ref{main result} is the knot $8_5$ in the Rolfsen knot table of \cite{R76}.
The rectangle condition of an $n$-bridge decomposition $(T_1, T_2;S)$ of a knot $K$ consists of nine rectangles created from the intersections of two pants decompositions of $S$. However, since there are infinitely many pants decompositions of a rational 3-tangle,
in order to show that a 3-bridge decomposition of a knot does not satisfy the rectangle condition, one must verify that all of the pairs of two pants decompositions of $S$ from $T_1$ and $T_2$ fail to satisfy the rectangle condition.

Even though the definition of the rectangle condition is technical to a certain extent so that it doesn't detect the strong irreducibility, we are convinced that the rectangle condition is not far from the strong irreducibility. Thus our ongoing work is to find a weaker version of the rectangle condition which detects the strong irreducibility.
By investigating the counterexamples in Theorems~\ref{main result} and ~\ref{main result2}, we have found the notion of the normal form which is defined originally in \cite{KL24} to be essential potentially for a weaker version of the rectangle condition. We expect that the weaker version might be obtained from the minimal intersections formed by a normal form on each boundary of two pants decompositions, which is simpler to handle than rectangles in the rectangle condition.

\section{Preliminary}\label{B2}

\subsection{Rational tangles}\label{B2-1}\hfill
A rational $3$-\textit{tangle} $T$ is a pair $(B, \tau)$, where $B$ is a 3-ball and $\tau$ is a set of three pairwise disjoint arcs $\{\tau_1, \tau_2, \tau_3\}$ properly embedded in $B$ such that there exists a homeomorphism of pairs ${H}: (B,\tau)\rightarrow (D^2\times I,\{p_1,p_2, p_3\}\times I)$, where $I=[0,1]$ and $p_i\in \text{int}\hskip 1pt D^2$, $1\leq i\leq 3$.

For a rational $3$-tangle $T=(B, \tau)$ with $\tau=\{\tau_1, \tau_2, \tau_3\}$, a \textit{compression} is a properly embedded disk in $B-\tau$ whose boundary is essential in $\partial B-\tau$, that is, does not bound a disk nor a once-punctured disk in $\partial B-\tau$. Note that since $(B, \tau)$ is a rational 3- tangle,
the boundary of a compression bounds a 2-punctured disk in $\partial B-\tau$. Also
note that there exists a set of three pairwise disjoint non-isotopic compressions, which is called a \textit{system of compressions} for $T$.

A disk $D$ in $B$ is called a \textit{bridge disk} if
$\text{int}D\cap \tau=\varnothing$ and $\partial D=\tau_i\cup \alpha$ for some $i$ ($1\leq i \leq 3$), where $\alpha$ is
a simple arc in $\partial B$. Such an arc $\alpha$ is called a \textit{bridge arc} for $\tau_i$.
For each $i$, there exists a bridge disk $D_i$ with a corresponding bridge arc $\alpha_i$. Note that for a tangle arc $\tau_i$,
there are infinitely many bridge disks and bridge arcs. See Lemma~\ref{infinitely many bridge arcs}.

Since $T=(B, \tau)$ is a rational 3-tangle,
there is a collection of three disjoint bridge disks $\{D_1, D_2, D_3\}$. Such a collection of disjoint bridge disks $\{D_1, D_2, D_3\}$ is
called a \textit{system of bridge disks} for $T=(B, \tau)$. The collection of bridge arcs $\{\alpha_1, \alpha_2, \alpha_3\}$ of a system of bridge disks, which is a collection of three disjoint bridge arcs, is called \textit{a system of bridge arcs} for $T$.
Note that the boundary of a regular neighborhood of a bridge disk $D_i$ in $B$ is the union
of two disks $E_i$ and $E'_i$ glued along their boundaries, where $E_i$ is a compression and $E'_i$ is a subset of $\partial B$ containing a bridge arc $\alpha_i$.
This yields one-to-one correspondences between a system of compressions, a system of bridge disks, and a system of bridge arcs.

A system of bridge arcs for the fixed trivial rational 3-tangle $T=(B, \tau)$ is said to be \textit{trivial} and is denoted
by $\ep=\{\ep_1, \ep_2, \ep_3\}$. Figure~\ref{Rectangle-1} illustrates a trivial rational 3-tangle $T$ which
contains bridge disks $D_i$'s, bridge arcs $\ep_i$'s, compressions $E_i$'s, and disks $E'_i$ in $\partial B$ containing the bridge arcs $\ep_i$ for $i=1,2,3$.

\begin{figure}[tbp]
\centering
\includegraphics[width = 0.34\textwidth]{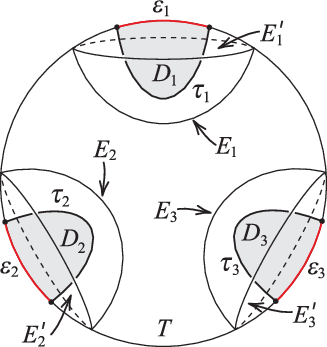}
\caption{A trivial rational 3-tangle $T$ which
contains bridge disks $D_i$'s, bridge arcs $\ep_i$'s, compressions $E_i$'s, and disks $E'_i$ in $\partial B$ containing the bridge arcs $\ep_i$ for $i=1,2,3$.
}
\label{Rectangle-1}
\end{figure}

The following lemma implies that there are infinitely many systems of bridge arcs for $T=(B, \tau)$ up to isotopy.

\begin{Lem}\label{infinitely many bridge arcs}
Let $T=(B, \tau)$ with $\tau=\{\tau_1, \tau_2, \tau_3\}$ be a rational 3-tangle. Let $\al_1$ and $\al_2$ be two disjoint bridge arcs
for $\tau_1$ and $\tau_2$ respectively. Then any arc in $\partial B$ connecting the two endpoints of $\tau_3$ and disjoint from $\al_1\cup\al_2$
is a bridge arc for $\tau_3$.
\end{Lem}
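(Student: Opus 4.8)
The plan is to use the two given bridge arcs to reduce the rational $3$-tangle to a trivial $1$-string tangle, in which the bridge arcs for $\tau_3$ are easy to classify, and then to transport the conclusion back to the original tangle.

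First I would realize $\al_1$ and $\al_2$ by a pair of \emph{disjoint} bridge disks. Let $D_1,D_2$ be bridge disks with $\partial D_1=\tau_1\cup\al_1$ and $\partial D_2=\tau_2\cup\al_2$. Their boundaries are disjoint, since $\tau_1,\tau_2$ are disjoint and $\al_1,\al_2$ are disjoint by hypothesis; consequently $D_1\cap D_2$ consists only of circles lying in the interior of $B-\tau$. Since $B-N(\tau)$ is a handlebody (via the homeomorphism $H$ it is the complement of three unknotted unlinked arcs), it is irreducible, so an innermost-disk argument removes these circles by an isotopy and I may take $D_1$ and $D_2$ to be disjoint.

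Next I would flatten $\tau_1$ and $\tau_2$ onto $\partial B$. Because the interior of $D_1$ misses $\tau$ and $\partial D_1\subset\tau_1\cup\partial B$, the disk $D_1$ is disjoint from $\tau_3$, so pushing $\tau_1$ across $D_1$ onto $\al_1$ is an ambient isotopy supported near $D_1$ that fixes $\tau_3$; similarly for $\tau_2$ across $D_2$. As $D_1,D_2$ are disjoint, these two isotopies have disjoint supports and may be performed simultaneously, and they can be chosen to fix $\partial B$ outside small disk neighborhoods $\delta_1,\delta_2$ of $\al_1,\al_2$. After this move $\tau_1,\tau_2$ lie on $\partial B$, and I set $B'=B-(N(\tau_1)\cup N(\tau_2))$, a $3$-ball whose boundary sphere carries exactly two marked points, namely the endpoints of $\tau_3$. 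Cutting the trivial model $(D^2\times I,\{p_1,p_2,p_3\}\times I)$ along the images of the bridge disks shows that $\tau_3$ is boundary-parallel in $B'$; hence $\tau_3$ admits a bridge disk $D_3\subset B'$, whose bridge arc lies in $\partial B'$ and is automatically disjoint from $\al_1\cup\al_2$.

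Finally I would invoke the triviality of arcs on a twice-marked sphere. Any arc $\be$ in $\partial B$ joining the endpoints of $\tau_3$ and disjoint from $\al_1\cup\al_2$ survives, for small enough $\delta_1,\delta_2$, as an arc on $\partial B'$; since $\partial B'$ is a sphere with only the two endpoints of $\tau_3$ marked, $\be$ is isotopic on $\partial B'$, rel its endpoints, to the bridge arc of $D_3$. Dragging $D_3$ along this isotopy produces a bridge disk for $\tau_3$ whose arc is $\be$ and whose interior lies in $B'$, hence misses $\tau_1\cup\tau_2$; undoing the flattening isotopy returns a genuine bridge disk for $\tau_3$ in the original tangle with bridge arc $\be$, so $\be$ is a bridge arc for $\tau_3$. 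I expect the main obstacle to be the reduction step: justifying rigorously that flattening $\tau_1,\tau_2$ yields a trivial $1$-string tangle whose complement $B'$ is a ball retaining only the two endpoints of $\tau_3$, so that the many a priori non-isotopic choices of $\be$ all become isotopic once neighborhoods of $\al_1,\al_2$ are deleted. The features that make this work are that $D_1,D_2$ are disjoint from $\tau_3$, which lets the reduction proceed without disturbing $\tau_3$, and the homeomorphism $H$, which certifies that $\tau_3$ stays boundary-parallel after the reduction.
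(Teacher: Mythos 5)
Your skeleton is reasonable, and several of its steps are solid: making the bridge disks $D_1,D_2$ disjoint by an innermost-circle argument in the irreducible complement $B-\tau$; the observation that $B'=B-(N(D_1)\cup N(D_2))$ is again a $3$-ball containing $\tau_3$ as a properly embedded arc; the fact that any two embedded arcs on a $2$-sphere with the same endpoints are ambient isotopic rel endpoints; and the final step of dragging a bridge disk along that isotopy. However, there is a genuine gap exactly where you yourself flag ``the main obstacle'': the claim that $\tau_3$ is boundary-parallel in $B'$. Your justification --- ``cutting the trivial model along the images of the bridge disks shows that $\tau_3$ is boundary-parallel'' --- is not an argument, because the images $H(D_1),H(D_2)$ under the trivializing homeomorphism are \emph{arbitrary} disjoint bridge disks for the first two strands, not the standard vertical ones, and there is no a priori reason that the third strand is boundary-parallel in the complement of two arbitrary such disks. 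The standard vertical bridge disk for the third strand may intersect $H(D_1)\cup H(D_2)$, and since the corresponding bridge arcs can cross, these intersections include arcs, not just circles; removing arc intersections between bridge disks is precisely the delicate point. Asserting triviality here is close to assuming the lemma itself. (Note that the paper does not prove this lemma either; it cites Lemma 1 of \cite{K24}, so the burden of this step cannot be discharged by appeal to the statement being proved.)

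The gap is fillable within your framework by a handlebody argument, which is what is missing: $B'-N(\tau_3)$ is the genus-$3$ handlebody $B-N(\tau)$ cut along the two compressions $E_i$ given by the frontiers of $N(D_i)$; each $E_i$ cuts off the solid torus $N(D_i)-N(\tau_i)$, so the genera of the cut pieces are $1+2$ and then $1+1$, leaving $B'-N(\tau_3)$ a solid torus; and a properly embedded arc in a $3$-ball whose exterior is a solid torus is boundary-parallel (by the loop theorem, or because the $2$-handle $N(\tau_3)$ must be attached along a longitude for the union to be a ball). With that inserted, your flattening step, the sphere-arc uniqueness, and the dragging of $D_3$ do complete the proof; as written, though, the central claim is unproved.
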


\begin{proof}
This is Lemma 1 in \cite{K24}.
\end{proof}

\begin{Cor}\label{infinitely many systems of bridge arcs}
There are infinitely many systems of bridge arcs for $T=(B, \tau)$ up to isotopy, and hence infinitely many systems of compressions and infinitely many systems of bridge disks for $T=(B, \tau)$ as well.
\end{Cor}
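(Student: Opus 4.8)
The plan is to produce, from a single system of bridge arcs, an explicit infinite family of pairwise non-isotopic systems, and then to invoke the one-to-one correspondence between systems of bridge arcs, systems of compressions, and systems of bridge disks recorded just before the statement in order to transfer the conclusion to all three objects at once.

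First I would fix a system of bridge arcs $\{\al_1,\al_2,\al_3\}$ for $T=(B,\tau)$, which exists because $T$ is a rational $3$-tangle. I would regard $\partial B$ as a $2$-sphere carrying the six endpoints of $\tau$ as marked points, so that $\partial B-\tau$ is a six-punctured sphere and $\al_1,\al_2$ are disjoint arcs joining the endpoints of $\tau_1$ and of $\tau_2$. By Lemma~\ref{infinitely many bridge arcs}, any arc in $\partial B$ joining the two endpoints of $\tau_3$ and disjoint from $\al_1\cup\al_2$ is again a bridge arc for $\tau_3$, so together with $\al_1,\al_2$ it forms a system of bridge arcs. Hence it suffices to exhibit infinitely many isotopy classes of such arcs for $\tau_3$.

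Next I would analyze the surface $P=\partial B-N(\al_1\cup\al_2)$, where $N(\cdot)$ denotes a regular neighborhood chosen so that the two neighborhoods are disjoint disks containing the endpoints of $\tau_1$ and of $\tau_2$ respectively. Since $\partial B$ is a sphere, $P$ is an annulus, and the two endpoints of $\tau_3$ are interior marked points of $P$. Arcs in an annulus joining two fixed interior points are classified up to isotopy by a winding number, so $P$ contains infinitely many isotopy classes of arcs $\be_n$ joining the endpoints of $\tau_3$. Each $\be_n$ is disjoint from $\al_1\cup\al_2$, so by the Lemma $\{\al_1,\al_2,\be_n\}$ is a system of bridge arcs.

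The main point to verify, and the step I expect to be the only real obstacle, is that distinct $\be_n$ give non-isotopic systems, i.e.\ that the winding invariant survives isotopy of $\partial B$ rel the six tangle endpoints rather than merely isotopy inside $P$. For this I would fix the core curve $\gam$ of the annulus $P$; it is essential in the six-punctured sphere $\partial B-\tau$, since it separates the two endpoints of $\tau_1$ from the remaining four punctures. The geometric intersection number $i(\be_n,\gam)$ is an isotopy invariant of arcs rel the marked points, and a direct count in the annulus shows that it grows with $|n|$, so the $\be_n$ are pairwise non-isotopic even as arcs in $\partial B-\tau$. Because the three arcs of a system are distinguished by their endpoint pairs, any isotopy carrying $\{\al_1,\al_2,\be_n\}$ to $\{\al_1,\al_2,\be_m\}$ would in particular carry $\be_n$ to $\be_m$; hence the systems are pairwise non-isotopic, giving infinitely many systems of bridge arcs up to isotopy. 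Finally, the one-to-one correspondence between systems of bridge arcs, systems of compressions, and systems of bridge disks promotes this to infinitely many systems of compressions and of bridge disks as well.
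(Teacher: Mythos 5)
Your overall strategy is the same as the paper's: use Lemma~\ref{infinitely many bridge arcs} to certify arbitrary arcs in the complement of $\al_1\cup\al_2$ as bridge arcs for $\tau_3$, produce infinitely many such arcs by winding in that complement (the paper does this by twisting $\ep_1$ along the circle $C$ of Figure~\ref{Rectangle-1-1}), and then invoke the one-to-one correspondences to get compressions and bridge disks. However, the step you yourself flagged as the crux contains a genuine error. You take $\gam$ to be the core curve of the annulus $P=\partial B - N(\al_1\cup\al_2)$, positioned so that it separates the two endpoints of $\tau_1$ from the remaining four punctures; in particular, both endpoints of $\tau_3$ lie on the side of $\gam$ containing $\partial N(\al_2)$. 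But then the region of $P$ between $\gam$ and $\partial N(\al_2)$ is itself an essential subannulus of $P$ containing both endpoints of $\tau_3$, and every winding class $\be_n$ has a representative spiraling entirely inside that subannulus. Hence $i(\be_n,\gam)=0$ for all $n$: your invariant is identically zero and distinguishes nothing, so the claim that it ``grows with $|n|$'' is false. Moreover, no curve isotopic in $P$ to the core can repair this: if such a curve $c$ separates the two endpoints of $\tau_3$, then the Dehn twist $t_c$ carries $\be_n$ to $\be_{n\pm1}$ while fixing $c$, so $i(\be_n,c)$ is independent of $n$ (it equals $1$); if it does not separate them, then $i(\be_n,c)=0$ as above. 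Core-parallel curves simply cannot see the winding.

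The gap is fixable, because winding in $P$ is detected by intersection with objects that \emph{cross} the annulus rather than run parallel to its core. For instance, writing $\be_n=t_{c}^{\,n}(\be_0)$ for a core-parallel curve $c$ in $P$ separating the two endpoints of $\tau_3$ (so $i(\be_0,c)=1$), the standard twist inequality $i(t_{c}^{\,n}(a),b)\geq |n|\, i(a,c)\, i(b,c)-i(a,b)$ yields $i(\be_n,\be_m)\geq |n-m|>0$ for $n\neq m$, where intersection numbers are computed in the six-punctured sphere $\partial B-\tau$; since isotopic arcs have geometric intersection number zero, the $\be_n$ are pairwise non-isotopic there. Alternatively, one can intersect with a fixed arc joining an endpoint of $\tau_1$ to an endpoint of $\tau_2$ (a spanning arc of $P$), whose geometric intersection number with $\be_n$ grows linearly in $|n|$. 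With either replacement, your remaining steps --- passing from non-isotopic third arcs to non-isotopic systems via the endpoint pairs, and then to compressions and bridge disks via the correspondences --- go through as written.
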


\begin{figure}[tbp]
\centering
\includegraphics[width = 0.34\textwidth]{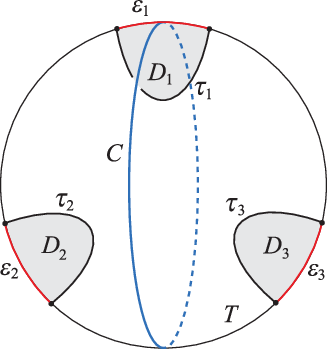}
\caption{Any arc which is obtained by twisting $n$ times ($n\in \mathbb{Z}$) the bridge arc $\ep_1$ along the circle $C$ becomes a bridge arc for $T$.}
\label{Rectangle-1-1}
\end{figure}

More explicitly, in Figure~\ref{Rectangle-1-1}, any arc obtained by twisting $n$ times ($n\in \mathbb{Z}$) the bridge arc $\ep_1$ along the circle $C$ is a bridge arc for $T$. Since there are infinitely many systems of bridge arcs in $T=(B, \tau)$, we assume that systems of bridge arcs intersect each other transversely and minimally.

Now consider a knot $K$ in $S^3$ and its bridge decomposition. An $n$\textit{-bridge decomposition} of $K$ is defined to be a triple $(T_1, T_2;S)$, where $S$ is a 2-sphere in $S^3$ that decomposes the pair $(S^3, K)$ into two rational $n$-tangles $T_1=(B_1, \tau_1)$ and $T_2=(B_2, \tau_2)$. We call $S$ a \textit{bridge sphere}. Then a knot $K$ is an $n$-\textit{bridge knot} if it has an $n$-bridge decomposition but no $(n-1)$-bridge decomposition.

It is not easy to determine whether a knot $K$ with an $n$-bridge decomposition has no $(n-1)$-bridge decomposition.
However, there is a sufficient condition for $K$ to be an $n$-bridge knot, known as the rectangle condition.
Since we are mainly interested in 3-bridge knots, from now on we will put our attention to 3-bridge decompositions of knots.\\

\textbf{Notations.} As with systems of bridge disks and systems of bridge arcs, in this paper we usually denote a set of disks $\{D_1, D_2, \ldots, D_n\}$ by $\mathcal{D}$ and a set of arcs $\{\al_1, \al_2, \ldots, \al_m\}$ by $\al$. However, for simplicity for notations, we also allow $\mathcal{D}$ to denote the union $\bigcup_{i=1}^n D_i$ and $\al$ the union $\bigcup_{j=1}^m \al_j$. \\

\subsection{Waves and Normal forms of systems of bridge arcs}\label{B2-2}\hfill

In this subsection we introduce the notions of a wave and of a normal form for a system of bridge arcs in a rational 3-tangle.
These concepts play an essential role throughout the paper.

We begin with the definition of a wave for a system of bridge arcs. Before doing so, we recall the notion of a wave in two related contexts: for a system of essential disks in a handlebody, and for a system of compressions in a rational tangle, as defined in \cite{KL12} and \cite{K20}, respectively.

Let $H$ be a genus $g$ handlebody. There is a maximal collection of $(3g-3)$ mutually disjoint, non-isotopic essential disks $\{D_1, \ldots , D_{3g-3}\}$
which cuts $H$ into a collection of $(2g-2)$ balls, and correspondingly cuts $\partial H$ into a collection of $2g-2$ pants.
Let $D$ be an essential disk that is not isotopic to any of $D_i$'s ($i=1, 2, \ldots, 3g-3$). Then $D$ must intersect
$\bigcup_{i=1}^{3g-3}D_i$. We assume that $D$ intersects $\bigcup_{i=1}^{3g-3}D_i$ transversally and minimally.
Consider the intersection $D\cap(\bigcup_{i=1}^{3g-3}D_i)$. Then there exists a subarc of $\partial D$
which cobounds a disk with an outermost arc of $D$ cut off by these intersections.
Such a subarc of $\partial D$ is called a \textit{wave}, denoted by $\omega$. See Figure~\ref{Rectangle-17}a.
If the outermost arc corresponding to a wave lies in $D_{i_0}$ for some $1\leq i_0\leq 3g-3$,
then we say that \textit{the wave is based at} $D_{i_0}$.

\begin{figure}[tbp]
\centering
\includegraphics[width = 1\textwidth]{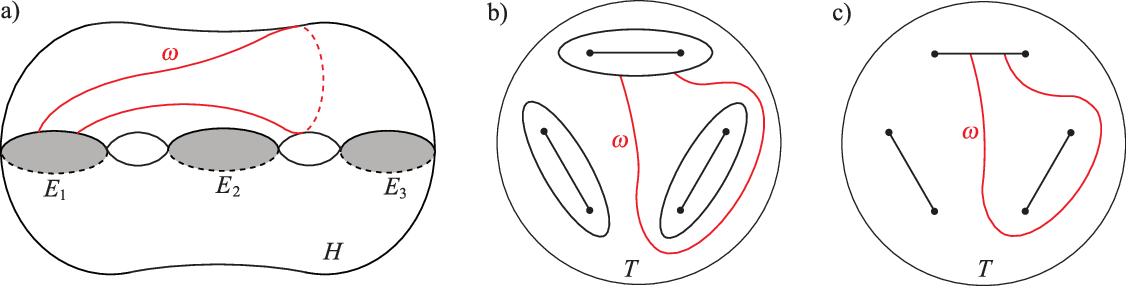}
\caption{Waves from essential disks on a genus 2 handlebody, from compressions, and from bridge arcs on a rational 3-tangle.}
\label{Rectangle-17}
\end{figure}

In a manner analogous to the case of essential disks in a handlebody, one may define a wave for a system of compressions in a rational tangle.
Let $T=(B, \tau)$ be a rational $n$-tangle, where $\tau$ is a union of $n$ strings $\tau_1, \ldots, \tau_n$.
As defined in a rational 3-tangle, a compression in $T$ is a properly embedded disk in $B$ whose boundary
is neither a disk nor a once-punctured disk in $\partial B - \tau$.
Then there exists a maximal collection of $(2n-3)$ pairwise disjoint, nonisotopic compressions $\{D_1, \ldots , D_{2n-3}\}$ in $T$,
called a system of compressions for $T$, such a system cuts $T$ into a collection of $(2n-2)$ 3-balls, while simultaneously decomposing $\partial T$ into a collection of $(n-2)$ pairs of pants together with $n$ disks.
Let $D$ be a compression in $T$ which is not isotopic to any of $D_i$ ($i=1,2,\ldots,2n-3$). Then $D$ must intersect
$\bigcup_{i=1}^{2n-3} D_i$. Assuming that $D$ intersects $\bigcup_{i=1}^{2n-3} D_i$ transversally and minimally, we consider
the intersection $D \cap \bigcup_{i=1}^{2n-3} D_i$. Then there exists a subarc of $\partial D$
that cobounds a disk with an outermost arc in $D$ arising from the intersections of $D$ and $\bigcup_{i=1}^{2n-3} D_i$.
Such a subarc of $\partial D$ is called a \textit{wave} (see Figure~\ref{Rectangle-17}b). If the outermost arc for a wave belongs to $D_{i_0}$ ($1 \leq i_0 \leq 2n-3$),
then we say that \textit{the wave is based at} $D_{i_0}$.

Now we define a wave for a system of bridge arcs in a rational 3-tangle.
Let $T=(B, \tau)$ be a $3$-rational tangle, where $\tau$ is the union of three strings $\tau_1, \tau_2, \tau_3$.
Also let $\alpha=\{\alpha_1,\alpha_2, \alpha_3\}$ be a system of bridge arcs of $T$.

A \textit{wave} with respect to a system of bridge
arcs $\al$ is defined to be an embedded arc $\omega$ in $\partial B$ such that the interior is disjoint from $\al$ and both endpoints
of $\omega$ lies in the same bridge arc $\al_i$ for some $i\in\{1, 2, 3\}$, and when
$\al_i$ and $\omega$ oriented, both endpoints(i.e., intersection points between $\al_i$ and $\omega$)
have opposite signs, and $\omega$ is essential in $\partial B -N(\al)$.
Here $N(\al)$ is a regular neighborhood of $\al$ in $\partial B$. We say that \textit{a wave} $\omega$ \textit{is based at} $\al_i$.
An example of a wave based at $\alpha_i$ is illustrated in Figure~\ref{Rectangle-17}c.
We note from Figure~\ref{Rectangle-17}c that a wave $\omega$ based at $\al_i$ in a rational 3-tangle separates the two remaining bridge arcs
$\al_j$ and $\al_k$. We also observe that the condition in the definition of a wave that both endpoints(i.e., the intersection points between $\al_i$ and $\omega$)
have opposite signs follows from the corresponding property of waves for a system of compressions and for a collection
of essential disks in a handlebody.

The following lemma establishes the existence of a wave associated with a bridge arc.

\begin{Lem}\label{wave existence}
Suppose  $\alpha=\{\alpha_1,\alpha_2, \alpha_3\}$ is a system of bridge arcs for a rational 3-tangle $T$. If a bridge arc $\gamma$ in $T$, which
is not isotopic to any of $\alpha_i$ ($i=1, 2, 3)$, intersects $\al$, then $\gamma$ admits a wave with respect to $\alpha$.
\end{Lem}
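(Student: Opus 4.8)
The plan is to prove Lemma~\ref{wave existence} by an outermost-arc argument, mirroring the structure of the wave definitions for systems of compressions and for systems of essential disks in a handlebody. Let $\gamma$ be a bridge arc for some tangle string, say $\tau_k$, that is not isotopic to any $\al_i$ and that intersects $\al=\al_1\cup\al_2\cup\al_3$ transversally and minimally in $\partial B$. First I would record that, by the one-to-one correspondence between bridge arcs and bridge disks, $\gamma$ is the bridge arc of a bridge disk $D_\gamma$, and similarly each $\al_i$ corresponds to a bridge disk $D_i$ with a system of compressions $E_i$; this lets me pass freely between the arc picture on $\partial B$ and the disk picture in $B$ whenever it is convenient.

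\textbf{The main step: producing the wave.}

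Consider the intersection $\gamma\cap\al$. Since $\gamma$ is not isotopic to any $\al_i$ and intersects $\al$ minimally, this intersection is a nonempty finite collection of points. Cutting $\gamma$ at these points produces finitely many subarcs of $\gamma$; choose an outermost one, i.e.\ a subarc $\delta$ of $\gamma$ whose interior misses $\al$ and which together with a subarc of some $\al_i$ cuts off a disk (equivalently, a subarc adjacent to an endpoint of $\gamma$ or an innermost intersection). I then want to promote this outermost subarc to a genuine wave $\omega$ based at some $\al_i$. The candidate $\omega$ is the outermost subarc $\delta$ (pushed slightly off $\al$ so its endpoints land on a single $\al_i$), and I must verify the three defining properties of a wave: (i) the interior of $\omega$ is disjoint from $\al$, which holds by outermostness; (ii) both endpoints lie on the \emph{same} bridge arc $\al_i$, which I would establish using Lemma~\ref{infinitely many bridge arcs} together with the combinatorics of how a bridge arc for $\tau_k$ meets the three arcs of $\al$ — here the fact that each $\al_i$ has its two endpoints identified with the two endpoints of a single string $\tau_i$ constrains the adjacency; and (iii) the endpoints carry opposite signs and $\omega$ is essential in $\partial B - N(\al)$, where essentiality follows from the minimality of $|\gamma\cap\al|$ (a trivial $\omega$ would allow an isotopy reducing the intersection number).

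\textbf{Where the difficulty lies.}

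The routine part is the outermost-arc extraction; the genuine obstacle is property~(ii), namely guaranteeing that the two endpoints of the outermost subarc lie on the \emph{same} $\al_i$ rather than on two different bridge arcs. A priori an outermost subarc of $\gamma$ could run from $\al_i$ to $\al_j$ with $i\neq j$, which would give a seam-type arc rather than a wave. The key point to pull through is that in a rational $3$-tangle a system of bridge arcs $\al$ cuts $\partial B$ into a small number of combinatorial regions whose structure is completely understood (via the trivial model $\ep=\{\ep_1,\ep_2,\ep_3\}$ of Figure~\ref{Rectangle-1} and the twisting description of Figure~\ref{Rectangle-1-1}), and I would analyze the possible outermost subarcs region-by-region. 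If an outermost subarc connects two distinct $\al_i$ and $\al_j$, I expect to derive a contradiction either with the minimality of $|\gamma\cap\al|$ or with the hypothesis that $\gamma$ is an honest bridge arc for a \emph{single} string $\tau_k$ — the endpoints of $\gamma$ are the two endpoints of $\tau_k$, so the global structure of $\gamma$ forces at least one outermost subarc of the correct (same-$\al_i$) type. Once property~(ii) is secured, properties~(i) and~(iii) are immediate from outermostness and minimality, completing the proof.
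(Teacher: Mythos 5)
Your proposal stalls exactly at the point you flag as ``where the difficulty lies,'' and that point is the entire content of Lemma~\ref{wave existence}. On the surface $\partial B$ the intersection $\gamma\cap\al$ is a finite set of points, so there is no genuine ``outermost'' structure to exploit: outermost-arc arguments live inside a \emph{disk}, where the intersections are properly embedded arcs. (Your parenthetical definition of the candidate $\delta$ --- a subarc cobounding a disk with a subarc of some $\al_i$ --- would be a bigon, which minimal position forbids, so the candidate is not even well defined.) Consequently $\delta$ is just some complementary subarc of $\gamma$, and nothing forces its two endpoints onto the same $\al_i$. The route you propose for closing this gap --- deriving a contradiction from minimality of $|\gamma\cap\al|$, or from $\gamma$ being a bridge arc, whenever a subarc runs from $\al_i$ to $\al_j$ with $i\neq j$ --- fails as stated: seam-type subarcs joining distinct bridge arcs are perfectly compatible with minimal position and with $\gamma$ bounding a bridge disk; indeed they are ubiquitous (the rectangles in the rectangle condition are built precisely from such subarcs). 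What must be shown is that \emph{at least one} complementary subarc is a wave, and the retreat to ``the global structure of $\gamma$ forces at least one subarc of the correct type'' is an assertion of the lemma, not a proof of it. Likewise your claim that the opposite-sign condition is ``immediate from outermostness and minimality'' has no justification in the surface picture, since an essential subarc returning to the same $\al_i$ could a priori approach it from opposite sides.

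The missing idea is the one you set up in your first paragraph and then never used: pass to the disk picture and run the outermost argument \emph{there}. This is precisely the paper's proof. Let $E$ be the compression associated to a bridge disk for $\gamma$ and let $\{E_1,E_2,E_3\}$ be the system of compressions corresponding to $\al$. Since $\gamma$ meets $\al$ in its interior, $E$ meets $E_1\cup E_2\cup E_3$; an outermost arc of $E\cap(E_1\cup E_2\cup E_3)$ in $E$ lies in a single disk $E_{i_0}$, so the subarc of $\partial E$ it cuts off automatically has \emph{both} endpoints on $\partial E_{i_0}$ --- this is where ``same $\al_i$'' comes from, for free --- and it approaches $E_{i_0}$ from one side, which yields the opposite signs. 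Identifying $\partial E_{i_0}$ with the boundary of a regular neighborhood of $\al_{i_0}$ in $\partial B$ then converts this subarc of $\partial E$ into a wave of $\gamma$ based at $\al_{i_0}$. In short, your framework is compatible with the correct argument, but the step you defer is the theorem itself, and the tool that proves it (the bridge-arc/bridge-disk/compression correspondence) appears in your write-up only as an unused preliminary remark.
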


\begin{proof}
We recall that the boundary of a regular neighborhood of a bridge disk $D_i$ in $B$ is the union
of two disks $E_i$ and $E'_i$ glued along their boundaries, where $E_i$ is a compression and $E'_i$ is a subset of $\partial B$ containing a bridge arc $\alpha_i$ for $F$.
Consequently, the boundary of a compression $E_i$ can be identified with a regular neighborhood of the bridge arc $\alpha_i$ in $\partial B$.

If a bridge arc $\gamma$ does not intersect $\alpha$ in its interior, then since it is not isotopic to any of $\alpha_i$ ($i=1, 2, 3)$, the arc $\gamma$ itself forms a wave.
Thus, we may assume that $\gamma$ intersects $\alpha$ in its interior. For each $i\in\{1, 2, 3\}$ let $D_i$ be a bridge disk for $\alpha_i$ and let $E_i$ denote the corresponding compression. Let $D$ be a bridge disk for $\gamma$, and let $E$ be the compression associated to $D$.
Since $\gamma$ intersects $\alpha$ in its interior, the compression $E$ necessarily intersects the system of compressions $\{E_1, E_2, E_3\}$ in $T$.
Then, as established in the definition of a wave for a system of compressions in a rational tangle, there exists a wave based at some $E_{i_0}$, which
is realized as a subarc of $\partial E$ whose endpoints lie on $\partial E_{i_0}$. Since, in general, the boundary of a compression can be identified with a regular neighborhood of a bridge arc in $\partial B$, it follows that there exists a subarc $\omega$ of $\gamma$ whose endpoints both lie on the same bridge arc $\al_{i_0}$. This subarc $\omega$ is therefore a wave.
\end{proof}

Now we introduce the notion of a normal form between two systems of bridge arcs, originally defined by the first author in \cite{K24}.
Let $\alpha=\{\alpha_1,\alpha_2,\alpha_3\}$ and $\beta=\{\beta_1,\beta_2,\beta_3\}$ be systems of bridge arcs for a rational 3-tangle $T = (B,\tau)$.
We say that the bridge arc system $\alpha$ is \textit{in a normal form} with respect to $\beta$ if, for each $\beta_i$, there exist no two adjacent
intersections between $\alpha$ and $\beta_i$ in $\beta_i$ which belong to the same $\alpha_j$ for some $j\in\{1, 2, 3\}$, and vice versa.

There are several properties of the normal form for the systems of bridge arcs.
In particular, the following theorem, which is the main result of \cite{K24},
shows that the normal form of a system of bridge arcs with respect to the trivial system of bridge arcs $\ep=\{\ep_1, \ep_2, \ep_3\}$
is unique.

\begin{Thm}[\cite{K24}]\label{unique normal form}
Let $\alpha=\{\alpha_1,\alpha_2,\alpha_3\}$ be a system of bridge arcs in normal form with respect to $\ep=\{\ep_1, \ep_2, \ep_3\}$
in the fixed trivial rational 3-tangle $T = (B,\ep)$. Then, up to isotopy, there is a unique normal form with respect to $\ep$, namely $\ep$ itself.
\end{Thm}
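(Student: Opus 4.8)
The plan is to deduce $\alpha \simeq \ep$ directly from the normal form hypothesis, in two stages: first that $\alpha$ and $\ep$ must be disjoint, and then that disjointness forces them to coincide up to isotopy. Throughout I put $\alpha$ and $\ep$ in transverse, minimal position. The first move is to translate the normal form condition into the language of waves. Any wave of a component $\ep_j$ with respect to $\alpha$ is a subarc of $\ep_j$ whose interior is disjoint from $\alpha$ and whose two endpoints lie on a common $\alpha_k$; along $\ep_j$ these endpoints are therefore two intersection points with $\alpha$ that are adjacent and belong to the same $\alpha_k$. This is exactly the configuration excluded by the definition of the normal form (and, symmetrically, with the roles of $\alpha$ and $\ep$ exchanged). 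Hence a system in normal form with respect to $\ep$ admits no wave of either system with respect to the other, and it suffices to show that intersection always produces a wave.

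The core step is to prove that $\alpha \cap \ep \ne \varnothing$ forces a wave. Choose a component $\ep_j$ meeting $\alpha$. If $\ep_j$ is not isotopic to any $\alpha_i$, then Lemma~\ref{wave existence} applied with $\gamma = \ep_j$ directly furnishes a wave of $\ep_j$ with respect to $\alpha$, contradicting the normal form. The delicate case is when $\ep_j$ happens to be isotopic to some $\alpha_i$, so that Lemma~\ref{wave existence} does not apply verbatim. Here I would rerun the argument of its proof at the level of compressions: passing to the compression $E$ determined by $\ep_j$ and the system $\{E_1,E_2,E_3\}$ determined by $\alpha$, minimality forces $E$ to be disjoint from $E_i$, yet $E$ still meets some $E_{i'}$ with $i' \ne i$ because $\ep_j$ meets $\alpha$. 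An outermost arc of $E$ cut off by $\{E_{i'},E_{i''}\}$ yields a wave of $E$ based at one of $E_{i'},E_{i''}$, and transporting it through the identification of $\partial E_k$ with a neighborhood of $\alpha_k$ produces a wave of $\ep_j$ based at some $\alpha_k$ with $k \ne i$. In either case the normal form is violated, so we conclude $\alpha \cap \ep = \varnothing$.

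It remains to upgrade disjointness to isotopy. Since $\alpha$ and $\ep$ are systems of bridge arcs for the same trivial tangle, the arcs $\alpha_i$ and $\ep_i$ share the two endpoints of $\tau_i$, and disjointness means each such pair cobounds a region in the planar surface obtained by cutting $\partial B$ along $\ep$. Using that this surface is planar and that $\alpha_i$ is an essential bridge arc with prescribed endpoints, I would rule out any nontrivial wrapping of $\alpha_i$ inside the pieces of $\partial B \setminus N(\ep)$ and conclude $\alpha_i \simeq \ep_i$, hence $\alpha \simeq \ep$. Since $\ep$ is trivially in normal form with respect to itself, this identifies $\ep$ as the unique normal form with respect to $\ep$, as claimed.

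The main obstacle is the coincidence case in the core step: when the tested arc is isotopic to a component of the other system the hypothesis of Lemma~\ref{wave existence} fails, and I must reprove the existence of a wave by the outermost-disk argument at the level of compressions, taking care that the outermost arc gives an \emph{essential} wave based at a component to which $\ep_j$ is not isotopic, rather than a spurious inessential arc. A secondary difficulty is the concluding disjoint-implies-isotopic step, where the possible presence of other endpoints of $\tau$ in the region cobounded by $\alpha_i$ and $\ep_i$ must be excluded so that no nontrivial wrapping survives.
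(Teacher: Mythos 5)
First, a structural point: the paper itself contains no argument for this theorem. Its entire ``proof'' is the sentence ``This is Theorem 1 in \cite{K24}'', so there is nothing internal to compare your attempt against; your proof has to stand on its own. Your first stage does stand up: a wave of a component of one system with respect to the other is a subarc whose interior misses the other system and whose endpoints lie on a single arc of it, so its two endpoints are adjacent intersections belonging to the same arc, which is exactly what the normal form forbids; and Lemma~\ref{wave existence} supplies such a wave whenever the systems meet. (The ``delicate case'' you single out, where some $\ep_j$ is isotopic to an $\al_i$ yet still meets $\al$, never occurs: in minimal position the bigon criterion gives pairwise minimal position, and an arc isotopic to $\al_i$ has zero geometric intersection with every component of $\al$, hence is disjoint from $\al$.) So normal form does force the two systems to have disjoint interiors.

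The genuine gap is your second stage, ``disjoint implies isotopic'', which you defer as a secondary difficulty; under the definitions as reproduced in this paper that claim is not just unproved but false, so your outline cannot be completed from them. Work in the model $(B,\tau)=(D^2\times I,\{p_1,p_2,p_3\}\times I)$, with each trivial arc $\ep_i$ of the form $(\sigma_i\times\{0,1\})\cup(\{q_i\}\times I)$, where $\sigma_i$ is the straight segment from $p_i$ down to $q_i\in\partial D^2$. Let $\rho$ be an embedded path in $D^2$ from $p_1$ to a point $x_0\in\partial D^2$ lying between $q_2$ and $q_3$, chosen to pass above $p_2$ and to miss $\sigma_2\cup\sigma_3$. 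Then $\rho\times I$ is an embedded disk with interior disjoint from $\tau$ and with boundary $\tau_1\cup\al_1$, where $\al_1=(\rho\times\{0,1\})\cup(\{x_0\}\times I)$; so $\al_1$ is a bridge arc for $\tau_1$, and $\{\rho\times I,\ \sigma_2\times I,\ \sigma_3\times I\}$ is a system of bridge disks, giving the system of bridge arcs $\{\al_1,\ep_2,\ep_3\}$ whose interiors are disjoint from $\ep$. Yet $\al_1$ is not isotopic to $\ep_1$ in the $6$-punctured sphere: the closed curve $\al_1\cup\ep_1$ separates the two endpoints of $\tau_2$ from the two endpoints of $\tau_3$, so neither complementary disk is puncture-free. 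This system meets $\ep$ in no interior points at all, hence it satisfies the normal form condition of Section~\ref{B2-2} vacuously while failing the conclusion of the theorem. Consequently ``disjointness'' is strictly weaker than the conclusion you need, and no amount of planar-surface analysis in $\partial B - N(\ep)$ will rule out this wrapping. Whatever makes Theorem 1 of \cite{K24} true must rest on a finer formulation of the normal form than the one summarized in this paper --- for instance one that records intersections with the circles $\partial N(\ep_i)$ (equivalently, counts the ends of the arcs), which does detect such wrapping --- and your argument, like the paper's summary, never engages with that extra structure.
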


\begin{proof}
This is Theorem 1 in \cite{K24}.
\end{proof}

The following establishes that any systems of bridge arcs satisfying the rectangle condition must be in normal form.

\begin{Prop}\label{L4}
Let $(T_1, T_2;S)$ be a $3$-bridge decomposition of a knot $K$. Let $\alpha=\{\alpha_1,\alpha_2,\alpha_3\}$ and $\beta=\{\beta_1,\beta_2,\beta_3\}$ be systems of bridge arcs for $T_1$ and $T_2$ respectively that satisfy the rectangle condition. Then $\beta$ is in normal form with respect to $\alpha$ and vice versa.
\end{Prop}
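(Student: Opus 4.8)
The plan is to prove the contrapositive: I will show that if $\beta$ fails to be in normal form with respect to $\alpha$, then the pair $(\alpha, \beta)$ cannot satisfy the rectangle condition. By the symmetric definition of normal form, the same argument with the roles of $\alpha$ and $\beta$ exchanged will give the ``vice versa'' direction, so it suffices to treat one side.

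Suppose $\beta$ is not in normal form with respect to $\alpha$. By the definition of normal form, this means there is some $\alpha_i$ along which two \emph{adjacent} intersection points with $\beta$ both belong to the same arc $\beta_j$. First I would look at the subarc $\omega$ of $\beta_j$ cut off between these two adjacent intersection points on $\alpha_i$. The key observation is that since the two intersections are adjacent along $\alpha_i$, the arc $\omega$ together with a subarc of $\alpha_i$ bounds a bigon-like region in $\partial B_1 - N(\alpha)$, or more precisely $\omega$ realizes (or can be pushed to realize) a \emph{wave} based at $\alpha_i$ in the sense defined just above, via Lemma~\ref{wave existence}; the two endpoints of $\omega$ on $\alpha_i$ have opposite signs because they are consecutive, and essentiality follows from the minimality of the intersection of the two systems. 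Thus the failure of normal form produces a wave $\omega$ based at some $\alpha_i$.

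Next I would exploit the geometric consequence of a wave recorded in the discussion after the definition in Section~\ref{B2-2}: a wave $\omega$ based at $\alpha_i$ in a rational $3$-tangle \emph{separates} the two remaining bridge arcs $\alpha_j$ and $\alpha_k$ on $\partial B_1 - N(\alpha)$. I would translate this separation property into the language of the pants decomposition that underlies the rectangle condition. The rectangle condition demands, for each of the nine combinatorial types, a rectangle whose two opposite sides run along prescribed boundary components of the pants of $S$ coming from $\alpha$ and whose other two sides run along arcs of $\beta$. A wave based at $\alpha_i$ means that the strands of $\beta$ are forced to run in a way that keeps $\alpha_j$ and $\alpha_k$ on opposite sides, which obstructs the simultaneous presence of all the rectangle types: specifically, one of the nine types requires a band of $\beta$ connecting a pair of cuffs that the wave's separation forbids. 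So I would pick out the particular rectangle type that is incompatible with the separation caused by $\omega$ and argue that it cannot be present, contradicting the rectangle condition.

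The main obstacle, I expect, is the middle translation step: carefully matching the abstract ``separation of $\alpha_j$ and $\alpha_k$ by a wave'' to the precise combinatorial bookkeeping of which of the nine rectangle types is destroyed. This requires setting up notation for the cuffs of the pants and tracking how a wave reorganizes the cyclic order of intersections of $\beta$ along the boundary of each pair of pants; the sign/essentiality conditions in the definition of a wave must be used to rule out the degenerate possibility that the apparent failure of normal form is an artifact of non-minimal intersection. Once the wave is shown to genuinely separate the relevant cuffs, the failure of a specific rectangle type is immediate, and reversing the roles of $\alpha$ and $\beta$ completes the proof.
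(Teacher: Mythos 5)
Your overall strategy (argue the contrapositive, extract a separating arc from the failure of normal form, and contradict one of the nine rectangle types) runs parallel to the paper's proof, but your central construction goes the wrong way around, and the gap is real. The failure of normal form gives two intersection points $x,y$ that are adjacent \emph{along} $\alpha_i$ and lie on the same $\beta_j$. Adjacency along $\alpha_i$ only says that the subarc of $\alpha_i$ between $x$ and $y$ misses $\beta$ in its interior; it says nothing about the subarc $\omega$ of $\beta_j$ between $x$ and $y$, which in general crosses $\alpha$ many times. So $\omega$ is not a wave with respect to $\alpha$, there is no bigon-like region, and the claim that its endpoints have opposite signs ``because they are consecutive'' is also false (adjacent crossings along $\alpha_i$ can have the same sign, e.g.\ when $\beta_j$ spirals). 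Nor can you repair this with Lemma~\ref{wave existence}: that lemma concerns a bridge arc $\gamma$ and a system $\alpha$ of the \emph{same} rational tangle, whereas here $\beta_j$ is a bridge arc of $T_2$, the tangle on the opposite side of the bridge sphere, so the lemma simply does not apply. Indeed, if a wave of $\beta$ with respect to $\alpha$ existed whenever the two systems intersect, then by your own separation argument no pair of intersecting systems could ever satisfy the rectangle condition, contradicting the examples constructed in \cite{KK21} and \cite{KK23}.

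The repair is to take the separating arc on the other side, which is what the paper does: let $\eta$ be the subarc of $\alpha_i$ between $x$ and $y$. Its interior is disjoint from $\beta$ by adjacency, and it forms no bigon with $\beta_j$ by minimality of the intersection; hence $\eta$ is an essential arc based at $\beta_j$, and together with a subarc of $\beta_j$ it separates the other two arcs $\beta_k$ and $\beta_l$ in the pair of pants $S-N(\beta)$. The rectangle condition requires rectangles of type $(\alpha_a,\alpha_b,\beta_k,\beta_l)$, i.e.\ bands joining $\beta_k$ to $\beta_l$ whose interiors avoid $\alpha\cup\beta$; such a band cannot cross $\eta\cup\beta_j$, a contradiction. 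Exchanging the roles of $\alpha$ and $\beta$ handles the symmetric case. Note that your final step (separation obstructs a specific rectangle type) is sound and is exactly the paper's closing argument; the defect is solely in how the separating arc is produced.
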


\begin{proof}
For a contradiction, assume that $x$ and $y$ are two adjacent intersections in $\alpha_i$ that belong to the same $\beta_j$ for some $i, j\in\{1,2,3\}$. Let $\eta$ be the short subarc of $\alpha_i$ with endpoints $x$ and $y$, chosen so that it does not intersect $\be$ in its interior. Note that $\eta$ does not form a bigon with any subarc of $\beta_j$. If it did, we could isotope $\beta_j$ to reduce its intersection number with $\al$, which would contradict the assumption that the systems are in minimal general position.

Thus, $\eta$ together with a subarc of $\beta_j$ separates the other two bridge arcs different from $\beta_j$ among $\{\beta_1,\beta_2,\beta_3\}$. However, this is impossible, since the rectangle condition requires the existence of three types of rectangles between the two bridge arcs without intersecting $\eta$ or $\beta_j$. A similar argument can be applied when the roles of $\al$ and $\be$ are exchanged. This completes the proof.
\end{proof}

\section{Rectangle conditions and Strong irreducibility}\label{B3}

In this section, we define the rectangle condition and strong irreducibility for both a Heegaard splitting of a 3-manifold
and a bridge decomposition of a knot, and we show that the definitions on the two sides are equivalent; these constitute some of the main results of this paper.

The rectangle condition was originally defined in \cite{CG} for a genus $g$ Heegaard splitting $(V, W; \Sigma)$ of a
3-manifold $M$, where $\Sigma$ is a genus $g$ Heegaard surface that decomposes $M$ into two handlebodies $V$ and $W$.
Since we are primarily interested in a genus $2$ Heegaard splitting of a 3-manifold, we focus on describing the rectangle condition
for a genus $2$ Heegaard splitting $(V, W; \Sigma)$ of a 3-manifold.

Let $\mathcal{D}=\{D_1, D_2, D_3\}$ and $\mathcal{E}=\{E_1, E_2, E_3\}$ be collections of pairwise disjoint non-isotopic essential disks in $V$
and $W$ respectively. Then $\partial\mathcal{D}$ and $\partial\mathcal{E}$ induce pants decompositions $\mathcal{P}$ and $\mathcal{Q}$ of $\Sigma(=\partial V=\partial W)$,
each of which consists of two pants. We assume that $\partial\mathcal{P}$ and $\partial\mathcal{Q}$ intersect transversely and minimally,
and have three boundary components $\{a_1,b_1,c_1\}$ and $\{a_2,b_2,c_2\}$ respectively.
The two pants $P\in\mathcal{P}$ and $Q\in\mathcal{Q}$ are said to be \textit{tight} if, for each of the nine $4$-tuples of combinations listed below, there exists a rectangle $R$ embedded in $P$ and $Q$ such that the interior of $R$ is disjoint from $\partial P\cup \partial Q$, and the four edges of $\partial R$ are subarcs of the corresponding $4$-tuple:
\[
\begin{split}
&(a_1,b_1,a_2,b_2),\hskip 20pt (a_1,b_1,a_2,c_2),\hskip 20pt (a_1,b_1,b_2,c_2),\\
&(a_1,c_1,a_2,b_2),\hskip 20pt (a_1,c_1,a_2,c_2),\hskip 20pt (a_1,c_1,b_2,c_2),\\
&(b_1,c_1,a_2,b_2),\hskip 21pt (b_1,c_1,a_2,c_2),\hskip 21pt (b_1,c_1,b_2,c_2).
\end{split}
\]
\vskip 4pt

\begin{Def}[\textbf{Rectangle condition for a genus $2$ Heegaard splitting}] \label{Rectangle condition for Heegaard splitting}
Pants decompositions $\mathcal{P}$ and $\mathcal{Q}$ are said to satisfy a rectangle condition if every pair of a pant in $\mathcal{P}$ and a pant in $\mathcal{Q}$ is tight.
If there exist collections $\mathcal{D}=\{D_1, D_2, D_3\}$ and $\mathcal{E}=\{E_1, E_2, E_3\}$ of pairwise disjoint non-isotopic essential disks in $V$
and $W$ respectively that give pants decompositions $\mathcal{P}$ and $\mathcal{Q}$ satisfying the rectangle condition,
then the genus $2$ Heegaard splitting $(V, W; \Sigma)$ of the 3-manifold is said to satisfy a rectangle condition.
\end{Def}

Turning to a 3-bridge decomposition, analogously the first author \cite{K20} defined a rectangle condition for a $3$-bridge decomposition $(T_1, T_2;S)$ of a knot $K$ with $T_1=(B_1, \tau_1)$ and $T_2=(B_2, \tau_2)$.
Let $\mathcal{D}_i=\{D_{i1}, D_{i2}, D_{i3}\}$ be a system of compressions for $T_i$ $(i=1,2)$, and let $S_{\mathcal{D}_i}$ denote a connected component with
three circle boundaries in $\overline{S- \partial \mathcal{D}_i}$. In fact, $S_{\mathcal{D}_i}$ is a pair of pant. More generally, for a $n$-bridge decomposition, $S_{\mathcal{D}_i}$ admits a pants decomposition consisting of $(n-2)$ pairs of pants.

\begin{Def}
[\textbf{Rectangle condition for a $3$-bridge decomposition using compressions}] \label{for a 3-bridge decomposition using compressions}
Two pants $S_{\mathcal{D}_1}$ and $S_{\mathcal{D}_2}$ are said to satisfy the rectangle condition if $S_{\mathcal{D}_1}$ and $S_{\mathcal{D}_2}$ are tight.
If there exist systems of compressions $\mathcal{D}_1=\{D_{11}, D_{12}, D_{13}\}$ for $T_i$ and $\mathcal{D}_2=\{D_{21}, D_{22}, D_{23}\}$ for $T_2$
whose corresponding pants $S_{\mathcal{D}_1}$ and $S_{\mathcal{D}_1}$ satisfy the rectangle condition, then a $3$-bridge decomposition $(T_1,T_2;S)$ is said to satisfy a rectangle condition.
\end{Def}

By definition, the boundary of a surface $S_{\mathcal{D}_i}$ consists of the boundaries of the compressions $D_{i1}, D_{i2}, D_{i3}$.
Recall that, as illustrated in Figure~\ref{Rectangle-1}, the boundary of a regular neighborhood of a bridge disk $D_i$ in $B$ is the union
of two disks $E_i$ and $E'_i$ glued along their boundaries, where $E_i$ is a compression and $E'_i$ is a subset of $\partial B$ containing a bridge arc $\alpha_i$.
Therefore, the boundary of a compression can be identified with the boundary of the regular neighborhood of a bridge arc in $\partial B$.
Since the rectangle condition for a $3$-bridge decomposition in Definition~\ref{for a 3-bridge decomposition using compressions} is basically defined by a tight condition for the boundaries of compressions, it can equivalently be formulated using bridge arcs as follows.

\begin{Def}
[\textbf{Rectangle condition for a $3$-bridge decomposition using bridge arcs}] \label{for a 3-bridge decomposition using bridge arcs}
Let $\alpha=\{\alpha_{1}, \alpha_{2}, \alpha_{3}\}$ and $\beta=\{\beta_{1}, \beta_{2}, \beta_{3}\}$ be systems of bridge arcs for $T_1$ and $T_2$ respectively.
We say that the systems of bridge arcs $\alpha$ and $\beta$ satisfy the $\textit{rectangle condition}$ if, for each of the nine 4-tuples described below, there exists a rectangle $R$ in $S$ such that the interior of $R$ is disjoint from $\alpha\cup\beta$ and each of the four edges of $\partial R$ is a subarc of exactly one of the four entries of the 4-tuple:
\[
\begin{split}
&(\al_{1},\al_{2},\be_{1},\be_{2}),\hskip 20pt (\al_{1},\al_{2},\be_{1},\be_{3}),\hskip 20pt (\al_{1},\al_{2},\be_{2},\be_{3}),\\
&(\al_{1},\al_{3},\be_{1},\be_{2}),\hskip 20pt (\al_{1},\al_{3},\be_{1},\be_{3}),\hskip 20pt (\al_{1},\al_{3},\be_{2},\be_{3}),\\
&(\al_{2},\al_{3},\be_{1},\be_{2}),\hskip 20pt (\al_{2},\al_{3},\be_{1},\be_{3}),\hskip 20pt (\al_{2},\al_{3},\be_{2},\be_{3}).
\end{split}
\]

If there exist systems of bridge arcs for $T_1$ and $T_2$ that satisfy the rectangle condition, then a $3$-bridge decomposition $(T_1,T_2;S)$ is said to satisfy the rectangle condition.
\end{Def}

It is evident that the two definitions of the rectangle condition for a $3$-bridge decomposition in Definition~\ref{for a 3-bridge decomposition using compressions}
and in Definition~\ref{for a 3-bridge decomposition using bridge arcs} are equivalent.
Furthermore, by considering the double branched cover of a 3-bridge decomposition of a knot in $S^3$, which yields
a genus $2$ Heegaard splitting of a 3-manifold, we see that the rectangle condition for a genus $2$ Heegaard splitting in Definition~\ref{Rectangle condition for Heegaard splitting} corresponds naturally to the rectangle condition for a $3$-bridge decomposition in Definition~\ref{for a 3-bridge decomposition using compressions}.

In order to see the relationship, we first review some properties of essential disks in a genus 2 handlebody(or more generally in a genus $g$ handlebody).
Let $D$ and $E$ be essential disks in a genus 2 handlebody $H$ which intersect each other transversally and minimally.
Let $\delta$ be an outermost arc of intersection on $E$. Then $\delta$ cuts a disk $\Delta$ off from $E$ and divides $D$ into two subdisks $D'_1$ and $D'_2$.
Define $D_1=D'_1\cup_\delta \Delta$ and $D_2=D'_2\cup_\delta \Delta$. Then both are essential disks. By a slight isotopy, we may
assume that both $D_1$ and $D_2$ are disjoint from $D$. We say that $D_1$ and $D_2$ are obtained by \textit{disk-surgery} on $D$ along $E$(or along a subdisk of $E$).
See Figure~\ref{Rectangle-14}. Note that both $D_1$ and $D_2$ intersect $E$ fewer times than $D$ does. Also it is not hard to show that if $D$ is a nonseparating
essential disk, then the disks $D_1$ and $D_2$ obtained by disk-surgery on $D$ along $E$ are also nonseparating essential disks.
Similarly, we can define disk-surgery on a collection $\mathcal{D}$ along another collection $\mathcal{E}$ where $\mathcal{D}$ and $\mathcal{E}$ are sets of pairwise disjoint essential disks.

\begin{figure}[tbp]
\centering
\includegraphics[width = 0.75\textwidth]{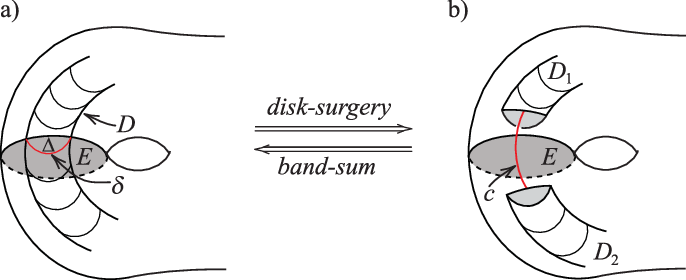}
\caption{Disk-surgery on $D$ along $E$ and band-sum $D$ of $D_1$ and $D_2$ along $c$.}
\label{Rectangle-14}
\end{figure}


The disk-surgery operation decomposes somewhat one essential disk into two essential disks. Conversely, we can restore the decomposed essential disk from two resulting essential disks by performing the so-called ``band-sum" operation. Let $D_1$ and $D_2$ be disjoint non-isotopic essential disks
on a genus 2 handlebody $H$, and $c$ an arc on $\partial H$ with one endpoint on $\partial D_1$ and the other on $\partial D_2$,
such that the interior of $c$ is disjoint from $D_1\cup D_2$. The regular neighborhood of $c$ in $\partial H$ forms a band $B$ connecting
$D_1$ and $D_2$.
Then $D_1\cup_\partial B\cup_\partial D_2$ is a disk which after a slight isotopy gives a properly embedded disk $D$ in $H$.
We say that $D$ is a \textit{band-sum} of $D_1$ and $D_2$ along an arc $c$. Note that since $D_1$ and $D_2$ are disjoint non-isotopic essential disks, a band-sum $D$ is also
essential. Thus we see that if $D_1$ and $D_2$ arise from disk-surgery on $D$ along a subdisk $\Delta$ of $E$, then conversely $D$ can be obtained as a band-sum
of $D_1$ and $D_2$ along an arc $c$ that intersects $\Delta$ transversally in a single point. See Figure~\ref{Rectangle-14}.

\begin{Lem}\label{nonseparating disks by bandsum}
Let $\mathcal{E}=\{E_1, E_2, E_3\}$ be a set of disjoint nonseparating essential disks in a genus 2 handlebody as illustrated in Figure~\ref{Rectangle-15}.
Suppose $D$ is a nonseparating disk in the handlebody. Then there exists a sequence of $\mathcal{D}_1=\{D\}, \mathcal{D}_2, \ldots, \mathcal{D}_n$
such that for each $i=2, 3, \ldots, n$, $\mathcal{D}_i$ is a set of disjoint nonseparating essential disks obtained by disk-surgery on $\mathcal{D}_{i-1}$
along $\mathcal{E}$, and $\mathcal{D}_n$ consists entirely of disjoint copies of the disks in $\mathcal{E}$.
\end{Lem}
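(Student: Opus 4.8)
The plan is to prove the following slightly more general statement by induction and then specialize to $\mathcal{D}_1=\{D\}$: for any collection $\mathcal{D}$ of pairwise disjoint nonseparating essential disks in $H$, there is a sequence $\mathcal{D}=\mathcal{D}_1,\mathcal{D}_2,\dots,\mathcal{D}_n$ of such collections, each obtained from the previous one by disk-surgery along $\mathcal{E}$, with $\mathcal{D}_n$ consisting of copies of disks in $\mathcal{E}$. Throughout I keep $\mathcal{D}$ and $\mathcal{E}$ in minimal transverse position, so that $\mathcal{D}\cap\mathcal{E}$ has no closed components. The induction is on the complexity $N=|\mathcal{D}\cap\mathcal{E}|$, the number of arcs in which $\mathcal{D}$ meets $\mathcal{E}$; since $N$ is a nonnegative integer and each surgery will strictly decrease it, termination is automatic.

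For the base case $N=0$, I would show that a nonseparating essential disk disjoint from $\mathcal{E}$ is isotopic to one of the $E_j$. Because $\mathcal{E}=\{E_1,E_2,E_3\}$ is a maximal system, it cuts $H$ into two $3$-balls and cuts $\Sigma$ into two pairs of pants. A disk disjoint from $\mathcal{E}$ lies in one of these balls, so its boundary is a simple closed curve contained in a single pair of pants. As $D$ is essential, $\partial D$ is essential in that pair of pants, and the only essential curves there are boundary-parallel; hence $\partial D$ is parallel to a copy of some $\partial E_j$ and $D$ is isotopic to $E_j$. Thus when $N=0$ the collection already consists of copies of disks in $\mathcal{E}$.

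For the inductive step $N>0$, some disk of $\mathcal{D}$ meets $\mathcal{E}$, so there is an arc of $\mathcal{D}\cap\mathcal{E}$ outermost on $\mathcal{E}$, cutting off a disk $\Delta\subset E_j$ whose interior is disjoint from $\mathcal{D}$. Let $D\in\mathcal{D}$ contain this arc, and let $\mathcal{D}'=(\mathcal{D}\setminus\{D\})\cup\{D_1,D_2\}$ be obtained by disk-surgery on $D$ along $\Delta$, discarding any inessential or redundant copies. By the facts recorded in the disk-surgery discussion preceding the statement, $D_1$ and $D_2$ are again nonseparating essential disks, and after a small isotopy they are disjoint from one another and from the remaining disks of $\mathcal{D}$, which were already disjoint from both $D$ and $\mathcal{E}\supset\Delta$. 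The standard outermost-arc count gives $|\mathcal{D}'\cap\mathcal{E}|<N$: pushing the two copies of $\Delta$ to the two sides of $E_j$ removes the outermost arc and creates no new intersections, since $\mathrm{int}\,\Delta$ meets neither $\mathcal{D}$ nor the other disks of $\mathcal{E}$. Applying the inductive hypothesis to $\mathcal{D}'$ and prepending $\mathcal{D}$ yields the desired sequence.

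The step requiring the most care is the inductive surgery step, and within it the two bookkeeping points that the surgered disks remain nonseparating and essential and that $N$ strictly drops. Essentiality is exactly where minimal position is used: if some $\partial D_i$ bounded a disk in $\Sigma$, that disk would furnish an isotopy reducing $N$, contradicting minimality. The preservation of the nonseparating property is the fact already noted in the disk-surgery discussion (following from $[\partial D_1]+[\partial D_2]=[\partial D]\neq 0$ together with the configuration of $\mathcal{E}$), which I would simply cite rather than reprove. Everything else is a routine innermost/outermost argument of the kind used to put systems of disks into normal form.
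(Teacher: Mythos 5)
Your proof is correct and follows essentially the same route as the paper's: induction on the number of intersection arcs of the disk system with $\mathcal{E}$, performing disk-surgery along an outermost subdisk and citing the facts recorded before the lemma that the surgered disks remain nonseparating, essential, and meet $\mathcal{E}$ fewer times. You are somewhat more careful than the paper --- you formulate the induction for collections of disks rather than a single disk and you actually prove the base case (a nonseparating essential disk disjoint from $\mathcal{E}$ is isotopic to some $E_j$, via the pair-of-pants argument), both of which the paper leaves implicit --- but these are refinements of the same argument, not a different one.
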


\begin{proof}
If $D$ is disjoint from $\mathcal{E}$, then $D$ must be isotopic to one of $E_1, E_2, E_3$, in which case we are done. Suppose that $D$ intersects $\mathcal{E}$ transversally and minimally. Then take one outermost disk and perform disk-surgery on $D$ along $\mathcal{E}$ to make two nonseparating essential disks $D_1$ and $D_2$. Let $\mathcal{D}_2=\{D_1, D_2\}$. As noted, $D_1$ and $D_2$ are nonseparating essential disks and $\mathcal{D}_2$ intersects $\mathcal{E}$ fewer than $D$ does. Now applying the mathematical induction on the number of intersections of $D$ with $\mathcal{E}$, we eventually obtain the result as desired.
\end{proof}

\begin{figure}[tbp]
\centering
\includegraphics[width = 0.65\textwidth]{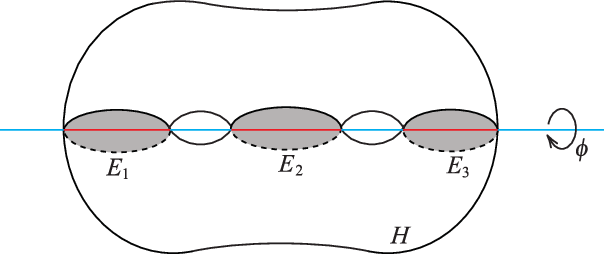}
\caption{Nonseparating essential disks in a genus 2 handlebody $H$ and a hyperelliptic involution $\phi$ on $H$.}
\label{Rectangle-15}
\end{figure}

\begin{Lem}\label{nonseparating disks to compressions}
Let $H$ be a genus 2 handlebody with $\mathcal{E}=\{E_1, E_2, E_3\}$ a set of disjoint nonseparating essential disks in $H$ and $T=(B, \tau)$ a rational 3-tangle obtained from $H$ by
taking a hyperelliptic involution $\phi$ on $H$ as shown in Figure~\ref{Rectangle-15}.
Then every nonseparating essential disk of $H$ can be isotoped to be disjoint from Fix($\phi$) so that under $\phi$ it is carried to a compression disk in $T$.
\end{Lem}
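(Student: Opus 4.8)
The plan is to use the description of $\phi$ as the deck transformation of the double branched covering $\pi\colon H\to B$, whose branch locus downstairs is $\tau$ and whose fixed-point set upstairs is $\mathrm{Fix}(\phi)=\pi^{-1}(\tau)$; on boundaries this restricts to the double cover $\pi\colon\partial H\to\partial B=S^2$ branched over the six points $\tau\cap\partial B$, the images of the six fixed points on $\partial H$. The strategy is to first isotope $D$ into equivariant position with respect to $\phi$, then invoke the hypothesis that $D$ is nonseparating to exclude the invariant alternative, and finally read off from the parity of the branch-point splitting that the image of $D$ is a compression.

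First I would put $D$ in \emph{equivariant position}, meaning $\phi(D)=D$ or $\phi(D)\cap D=\varnothing$. After isotoping $\partial D$ off the six fixed points on $\partial H$ and making $D$ transverse to $\phi(D)$, the map $\phi$ permutes the components of $D\cap\phi(D)$. An innermost circle on $D$ is carried by $\phi$ to an innermost circle on $\phi(D)$, and the corresponding innermost-disk exchange is an ambient isotopy of $D$ lowering $|D\cap\phi(D)|$; treating the arc components by the analogous outermost-arc exchange, after finitely many such isotopies one reaches $\phi(D)=D$ or $\phi(D)\cap D=\varnothing$. This equivariant-positioning step is the main obstacle, since one must verify that each exchange can be carried out $\phi$-equivariantly and is genuinely an isotopy of $D$, so that the isotopy class of $D$ in $H$ is preserved throughout.

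Next I would eliminate the invariant case. If $\phi(D)=D$, then $\phi|_D$ is an involution of a disk; since a disk admits no free involution, $D\cap\mathrm{Fix}(\phi)\neq\varnothing$, and as $\partial D$ avoids the branch points the action on $\partial D$ is free, so $\phi|_D$ has a single interior fixed point. Passing to the quotient, $\bar D=\pi(D)$ is then a disk meeting $\tau$ in exactly one point, so $\partial\bar D$ encloses an \emph{odd} number of branch points and $\partial D=\pi^{-1}(\partial\bar D)$ is connected. A short Euler-characteristic computation on the two branched pieces into which $\partial D$ cuts $\partial H$ shows that $\partial D$ separates $\partial H$, whence $D$ separates $H$, contradicting the hypothesis. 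Thus the invariant case cannot occur.

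Therefore $\phi(D)\cap D=\varnothing$. Any point of $D\cap\mathrm{Fix}(\phi)$ would be fixed by $\phi$ and hence lie in $D\cap\phi(D)$, so in fact $D\cap\mathrm{Fix}(\phi)=\varnothing$ and $\bar D=\pi(D)$ is an embedded disk whose interior is disjoint from $\tau$. Because each component of $\tau$ then lies to one side of $\bar D$, the curve $\partial\bar D$ encloses an \emph{even} number of branch points; the splitting $(0,6)$ would make $\partial\bar D$ bound a disk in $\partial B-\tau$ and force $D$ to be separating, so nonseparability leaves only $(2,4)$. Hence $\partial\bar D$ bounds neither a disk nor a once-punctured disk in $\partial B-\tau$, i.e.\ $\bar D$ is a compression in $T$, as required. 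One may also bypass the equivariant-position step by invoking Lemma~\ref{nonseparating disks by bandsum}: realizing $D$ as an iterated band-sum of copies of $E_1,E_2,E_3$ and choosing every band arc on $\partial H$ to miss the six fixed points keeps $D$ disjoint from $\mathrm{Fix}(\phi)$, after which the same parity argument identifies $\pi(D)$ as a compression.
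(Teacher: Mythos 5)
Your overall skeleton is sensible, and two of your supporting arguments are genuinely correct and nicely done: the exclusion of the invariant case (one interior fixed point, connected lifted boundary, Euler-characteristic count forcing $\partial D$ to separate $\partial H$) and the final parity count showing that an embedded image must enclose $2$ or $4$ branch points, hence be a compression. But the proof has a genuine gap at its load-bearing step: the equivariant-position claim. You assert that innermost-circle and outermost-arc exchanges reduce $|D\cap\phi(D)|$ through isotopies of $D$, and you yourself flag this as ``the main obstacle'' --- but you never close it, and the naive exchange argument does not work as stated. Each exchange swaps a subdisk of $D$ with a subdisk of $\phi(D)$, so it modifies $D$ and $\phi(D)$ simultaneously; to see that the intersection count actually drops one must control how the exchange ball meets its own $\phi$-image, treat invariant components $c=\phi(c)$ separately, and justify removing \emph{arc} components at all (two essential disks in a handlebody generally cannot be disjoined, so removing arcs already presupposes that $\phi(D)$ is isotopic to $D$, which itself needs an argument such as centrality of the hyperelliptic involution). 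Making this rigorous is essentially the equivariant Dehn lemma/loop theorem of Meeks--Yau (or a combinatorial substitute), which you neither prove nor cite; as written, the crux of the lemma is assumed rather than established.

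Your proposed bypass is in fact close to the paper's actual proof --- the paper isotopes $D$ off $\mathrm{Fix}(\phi)$ and then runs an induction up the band-sum tower of Lemma~\ref{nonseparating disks by bandsum} --- but your version of it is also incomplete: keeping $D$ disjoint from $\mathrm{Fix}(\phi)$ does \emph{not} suffice for your parity argument, because the image of $D$ downstairs is an embedded disk only if, in addition, $D\cap\phi(D)=\varnothing$; otherwise the image is merely immersed and ``encloses an even number of branch points'' has no meaning. Supplying exactly this embeddedness is the point of the paper's induction: at each stage the band arc is chosen so that its image in the tangle is embedded, whence each disk in the tower projects to an embedded band sum of compressions. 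So the comparison is: the paper achieves disjointness of $D$ from $\phi(D)$ constructively via the band-sum structure, while you attempt to achieve it by a general equivariant-position principle; your quotient/parity analysis is a cleaner way to finish than the paper's final induction, but it only becomes available after that disjointness has genuinely been proved.
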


\begin{proof}
Let $D$ be a nonseparating essential disk in $H$. Then the intersection of $D$ and $\mathcal{E}$ consists of arcs in $\mathcal{E}$.
Since Fix($\phi$) is the union of the three red arcs in $E_1\cup E_2\cup E_3$ as shown in Figure~\ref{Rectangle-15}, we can isotope $D\cap\mathcal{E}$ to be disjoint
from Fix($\phi$). Consequently $D$ itself can be isotoped to be disjoint from Fix($\phi$).

Now we show that the isotoped disk $D$ is sent to a compression in $T$ by $\phi$.
By Lemma~\ref{nonseparating disks by bandsum}, there exists a sequence of collections of disks $\mathcal{D}_1=\{D\}, \mathcal{D}_2, \ldots, \mathcal{D}_n$ such that for each $i=2, 3, \ldots, n$, $\mathcal{D}_i$ consists of pairwise disjoint nonseparating essential disks obtained by disk-surgery on $\mathcal{D}_{i-1}$
along $\mathcal{E}$, and moreover $\mathcal{D}_n$ consists precisely of disjoint copies of the disks in $\mathcal{E}$. Since the disk-surgery operation forward is resumed by the operation of band-sum backward, we may start from $\mathcal{D}_n$ and successively apply band-sum operations to recover $\mathcal{D}_{n-1}, \mathcal{D}_{n-2}, \ldots, \mathcal{D}_1=\{D\}$.

Obviously, the copies of essential disks $E_1, E_2, E_3$ in Figure~\ref{Rectangle-15}, which are disjoint from Fix($\phi$), are mapped to compressions in $T=(B, \tau)$ by the hyperelliptic involution $\phi$. Moreover, the band-sum of any two disks among $E_1, E_2, E_3$ is also sent to a compression in $T=(B, \tau)$. Therefore, every essential disk in $\mathcal{D}_{n-1}$ descends to a compression in $T=(B, \tau)$.

We now proceed by mathematical induction on $n$. Suppose $D^i$ in $\mathcal{D}_i$ is obtained by a band-sum of two essential disks $D^{i+1}_1$ and $D^{i+1}_2$ in $\mathcal{D}_{i+1}$. In other words, there exists an arc $c$ in $\partial H$ connecting the disks $D^{i+1}_1$ and $D^{i+1}_2$ to make $D^i$ by band-sum operation along $c$. By the induction, $D^{i+1}_1$ and $D^{i+1}_2$ are mapped to compressions $\check{D}^{i+1}_1$ and $\check{D}^{i+1}_2$ in $T$. Furthermore for the arc $c$, there exists an arc $c'$ isotopic to $c$ (with its endpoints fixed) whose image under $\phi$ is an embedded arc $\check{c'}$ in $T$ connecting the compressions $\check{D}^{i+1}_1$ and $\check{D}^{i+1}_2$. This implies that the image of $D^i$ under $\phi$ is precisely the band-sum of $\check{D}^{i+1}_1$ and $\check{D}^{i+1}_2$ along the arc $\check{c'}$, which is again a compression in $T$. This completes the proof.
\end{proof}

\begin{Lem}\label{separating disk with rectangle condition}
Let $(V, W; \Sigma)$ be a genus 2 Heegaard splitting of a 3-manifold $M$. Suppose $\mathcal{P}$ and $\mathcal{Q}$ are pants decompositions of $\Sigma$ from $V$ and $W$ respectively, that satisfy the rectangle condition. Then there exist pants
decompositions $\mathcal{P}'$ and $\mathcal{Q}'$, induced from the sets of pairwise disjoint non-isotopic nonseparating essential disks in $V$ and $W$ respectively, which satisfy the rectangle condition.
\end{Lem}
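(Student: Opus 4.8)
The plan is to eliminate separating disks one side at a time. Since the rectangle condition of Definition~\ref{Rectangle condition for Heegaard splitting} is a condition on the pair $(\mathcal{P},\mathcal{Q})$, I would first keep $\mathcal{Q}$ fixed and replace the disk system in $V$ by an all-nonseparating one without destroying tightness, and then run the same argument with the roles of $V$ and $W$ interchanged to replace $\mathcal{Q}$ as well. If the system $\mathcal{D}=\{D_1,D_2,D_3\}$ defining $\mathcal{P}$ already consists of nonseparating disks there is nothing to do, so I assume it contains a separating disk. In a genus $2$ handlebody an essential separating disk cuts $H$ into two solid tori, and a maximal disk system can contain at most one separating disk; hence, after relabeling, $\mathcal{D}=\{D_s,D_1,D_2\}$, where $D_s$ is separating and $D_1,D_2$ are the nonseparating meridians of the two solid tori.

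The replacement I would use is a band-sum. Let $D_3'$ be a band-sum of $D_1$ and $D_2$ along an arc $c$ in $\partial V$ whose interior is disjoint from $D_1\cup D_2$, as in Figure~\ref{Rectangle-14}. Then $D_3'$ is disjoint from and non-isotopic to $D_1,D_2$, and $\mathcal{D}'=\{D_1,D_2,D_3'\}$ is again a maximal system. Because $D_1,D_2$ are nonseparating, the band-sum $D_3'$ is again nonseparating (in the four-holed sphere obtained by cutting along $\partial D_1\cup\partial D_2$ it is one of the two separating curves of the four holes other than $\partial D_s$), so $\mathcal{D}'$ is an all-nonseparating system; this is the kind of system treated in Lemma~\ref{nonseparating disks by bandsum}, and the kind that descends to a system of compressions under the involution of Lemma~\ref{nonseparating disks to compressions}. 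Let $\mathcal{P}'$ be the pants decomposition induced by $\mathcal{D}'$.

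It remains to show that $\mathcal{P}'$ and $\mathcal{Q}$ still satisfy the rectangle condition, which is the heart of the matter. Cut $\Sigma$ along $\partial D_1\cup\partial D_2$ to obtain a four-holed sphere $F$, on which $\partial D_s$ and $\partial D_3'$ are the two simple closed curves separating the four holes into the patterns $\{\partial D_1,\partial D_1\}\mid\{\partial D_2,\partial D_2\}$ and $\{\partial D_1,\partial D_2\}\mid\{\partial D_1,\partial D_2\}$, respectively. The old rectangle condition says that each of the two old pants (with boundary circles two copies of $\partial D_1$ and $\partial D_s$, resp.\ two copies of $\partial D_2$ and $\partial D_s$) is tight against every pants of $\mathcal{Q}$. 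The new rectangles whose two $\mathcal{P}$-sides lie on $(\partial D_1,\partial D_3')$ or on $(\partial D_2,\partial D_3')$ are the easy ones: since $\partial D_3'$ runs parallel to $\partial D_1$ and to $\partial D_2$ away from the band $c$, I would obtain them from the old rectangles whose two $\mathcal{P}$-sides lie on the two copies of $\partial D_1$, resp.\ the two copies of $\partial D_2$, by sliding one side onto the parallel portion of $\partial D_3'$.

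The hard part will be the three new rectangles whose two $\mathcal{P}$-sides lie on $\partial D_1$ and $\partial D_2$, one for each pair of boundary circles of a given $Q\in\mathcal{Q}$. Before the replacement, $\partial D_1$ and $\partial D_2$ lay in the two distinct old pants separated by $\partial D_s$, so no single old rectangle has sides on both. My plan is to build each such rectangle by gluing an old $(\partial D_1,\partial D_s)$-rectangle of the first old pants to an old $(\partial D_2,\partial D_s)$-rectangle of the second along a common subarc of $\partial D_s$, and then isotoping the result off $\partial D_s$ and through the band $c$ so that its two $\mathcal{P}$-sides come to lie on $\partial D_1$ and $\partial D_2$. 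The obstacle is to arrange that the two rectangles meet $\partial D_s$ along the \emph{same} subarc between the same pair of intersection points with the two relevant curves of $Q$; this is exactly where I would exploit that \emph{both} old pants are tight against \emph{every} $Q$, which forces $\mathcal{Q}$-arcs of every combinatorial type to cross $\partial D_s$, so that an innermost/matching argument on $\partial D_s$ (using also the freedom in the band arc $c$ and, if needed, the alternative nonseparating replacement) yields a compatible pair. Once $\mathcal{P}'$ is constructed, I would run the identical replacement on the $W$ side to obtain a nonseparating $\mathcal{Q}'$, and the resulting pair $(\mathcal{P}',\mathcal{Q}')$ of nonseparating systems satisfies the rectangle condition.
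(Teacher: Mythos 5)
Your overall plan coincides with the paper's: note that a maximal system in a genus 2 handlebody contains at most one separating disk, keep the two nonseparating disks $D_1,D_2$, replace the separating disk by a nonseparating one disjoint from them (necessarily a band-sum of $D_1$ and $D_2$), verify the rectangle condition for the new system, and then repeat on the $W$ side. The problem is that your proposal stops exactly where the proof has to start. The decisive issue is \emph{which} band-sum to take: the curves in the four-holed sphere $\Sigma$ cut along $\partial D_1\cup\partial D_2$ that realize a given pairing of the holes form an infinite family (differing by twisting), not ``one of the two'' curves as you assert, and the rectangle condition holds for at most some of these choices. Your construction of the $(\partial D_1,\partial D_2)$-rectangles requires an $(\partial D_1,\partial D_s)$-rectangle in one old pant and a $(\partial D_2,\partial D_s)$-rectangle in the other to meet $\partial D_s$ along the \emph{same} subarc, i.e.\ between the same adjacent pair of intersection points and on the same two $\mathcal{Q}$-arcs. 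The old rectangle condition only asserts that each of these rectangles exists separately; each tightness statement produces its own subarc of $\partial D_s$, and nothing in the hypothesis forces any two of them to be compatible. The ``innermost/matching argument on $\partial D_s$'' that you invoke is precisely the assertion that needs proof, and you never give it; moreover it has to be carried out simultaneously for all nine $4$-tuples, for both new pants against both pants of $\mathcal{Q}$, and compatibly with choosing the band $c$ so that $\partial D_3'$ misses the interiors of all of these rectangles as well as the parallel portions of $\partial D_1,\partial D_2$ needed for your ``easy'' rectangles. As written, this is a restatement of the difficulty, not a resolution of it.

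For comparison, the paper resolves this point by a different mechanism: it first applies a homeomorphism $h$ of $V$ carrying $D_1,D_2,D_3$ to a standard triple $F_1,F_2,F_3$, and then Dehn twists $k$ along these disks, so that the rectangle pattern of $k\circ h(\partial\mathcal{E})$ is in the canonical position of Figure~\ref{Rectangle-16}. It then analyzes how the rectangle $R$ running from $F_1$ to $F_3$ extends across the once-punctured tori toward $F_2$; disjointness of the rectangle types leaves exactly two possible extensions, and in each of the two cases a specific nonseparating disk $F_3'$ is exhibited for which the $(F_1,F_3')$-, $(F_2,F_3')$-, and $(F_1,F_2)$-rectangles all exist. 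Pulling back by $(k\circ h)^{-1}$ gives $D_3'$. In other words, the paper uses the normalization plus a two-case analysis of the $\mathcal{Q}$-pattern to pin down the correct band, which is the ingredient your argument is missing. To repair your proof you would need to supply that matching/choice argument in full, or adopt something like the paper's normalization.
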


\begin{figure}[tbp]
\centering
\includegraphics[width = 1\textwidth]{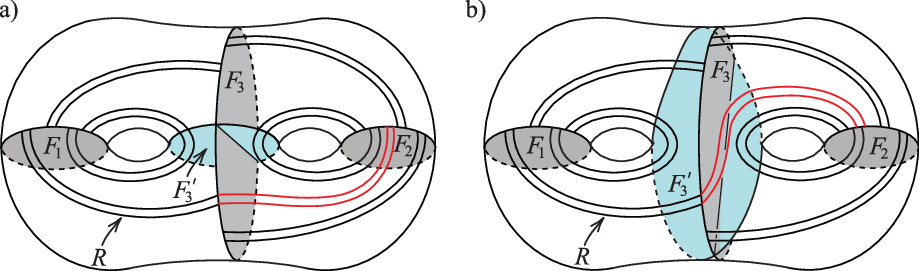}
\caption{The rectangle types of the two pants decompositions in $\Sigma$ coming from $\partial F_1, \partial F_2, \partial F_3$ and $k\circ h(\partial E_1), k\circ h(\partial E_2), k\circ h(\partial E_3)$.}
\label{Rectangle-16}
\end{figure}

\begin{proof}
Let $\mathcal{D}=\{D_1, D_2, D_3\}$ and $\mathcal{E}=\{E_1, E_2, E_3\}$ be the collections of pairwise disjoint non-isotopic essential disks in $V$ and $W$ respectively that induce the pants decompositions $\mathcal{P}$ and $\mathcal{Q}$ of $\Sigma$, which satisfy the rectangle condition.
If both $\mathcal{D}=\{D_1, D_2, D_3\}$ and $\mathcal{E}=\{E_1, E_2, E_3\}$ consist of all nonseparating essential disks, then we are done.

Now we assume that $D_1$ and $D_2$ are nonseparating, while $D_3$ is separating. Note that $\mathcal{D}$ must contain at most one separating disk.
Our goal is to find a nonseparating essential disk
$D_3'$ such that $\mathcal{D}'=\{D_1, D_2, D_3'\}$ and $\mathcal{E}=\{E_1, E_2, E_3\}$ satisfy the rectangle condition.

To achieve this, we perform suitable homeomorphisms on $V$ so that $\mathcal{D}=\{D_1, D_2, D_3\}$ and the pants $\mathcal{P}$ and $\mathcal{Q}$ in $V$
are arranged in a canonical form. In this setting, we locate a nonseparating essential disk
$F'_3$, and then apply the inverses of the chosen homeomorphisms on $V$ to obtain the desired nonseparating essential disk $D_3'$.
Since homeomorphisms on $V$ preserve the rectangle condition, the resulting
collections $\mathcal{D}'$ and $\mathcal{E}$ satisfy the required property.

First we apply a homeomorphism $h$ from $V$ to itself
sending $D_1, D_2, D_3$ to essential disks $F_1, F_2, F_3$ respectively as described in Figure~\ref{Rectangle-16}. Then the two pants decompositions in $\Sigma$ arising from $\partial F_1, \partial F_2, \partial F_3$ and $h(\partial E_1), h(\partial E_2), h(\partial E_3)$ satisfy the rectangle condition. Second, by performing a homeomorphism $k$ on $V$, consisting of a sequence of Dehn twists along the disks $F_1, F_2$, or $F_3$, if necessary, we can assume that the rectangle types in the rectangle condition for the two pants decompositions in $\Sigma$, obtained from $\partial F_1, \partial F_2, \partial F_3$ and $k\circ h(\partial E_1), k\circ h(\partial E_2), k\circ h(\partial E_3)$, appear as shown in Figure~\ref{Rectangle-16}.

Now we focus on the extension of the rectangle type $R$ emanating out from the disk $F_1$ and terminating at the disk $F_3$ as shown in Figure~\ref{Rectangle-16}. Since all rectangle types are disjoint, there are only two possible extensions of $R$ to the other once-punctured tori of $\partial H$ ending at the disk $F_2$, which are indicated by the red lines in Figures~\ref{Rectangle-16}a and \ref{Rectangle-16}b. For each extension, we consider a nonseparating essential disk $F'_3$ as shown in Figure~\ref{Rectangle-16}a or b, and replace $F_3$ with $F'_3$. We can observe from Figure~\ref{Rectangle-16} that there exist rectangle types between $F_1$ and $F'_3$, and between $F_2$ and $F'_3$. Also due to the extension of $R$ and the choice of the disk $F'_3$, there is a rectangle type between $F_1$ and $F_2$. This implies that the two pants decompositions in $\Sigma$ arising from $\partial F_1, \partial F_2, \partial F'_3$ and $k\circ h(\partial E_1), k\circ h(\partial E_2), k\circ h(\partial E_3)$ satisfy the rectangle condition.

If $\mathcal{E}=\{E_1, E_2, E_3\}$ contains a separating essential disk, we apply the same procedure above once again to complete the proof of this lemma.
\end{proof}


\begin{Thm}\label{main result-intro-1}
Let $(T_1, T_2; S)$ be a 3-bridge decomposition of a knot $K$ in $S^3$ and
$(V, W;\Sigma)$ the genus 2 Heegaard splitting obtained by the double branched covering of
$(T_1, T_2; S)$. Then $(V, W;\Sigma)$ satisfies the rectangle condition if and only if $(T_1, T_2; S)$ satisfies the rectangle condition.
\end{Thm}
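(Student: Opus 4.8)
The plan is to transport the rectangle condition across the double branched covering $\pi\colon\Sigma\to S$ determined by the hyperelliptic involution $\phi$, using the dictionary between compressions in $T_i$ and nonseparating essential disks in the handlebodies provided by Lemma~\ref{nonseparating disks to compressions} (together with Lemma~\ref{nonseparating disks by bandsum}). Working up to isotopy, I would realize the pants decompositions by the strictly $\phi$-invariant curves: a system of bridge arcs $\alpha=\{\alpha_1,\alpha_2,\alpha_3\}$ for $T_1$ lifts to $\hat\alpha_i=\pi^{-1}(\alpha_i)$, circles through the branch-point preimages that are isotopic to the boundaries of the lifted compression disks, and these cut $\Sigma$ into two pants $P_1,P_2$ with $\phi(P_1)=P_2$ and $\partial P_1=\partial P_2=\{\hat\alpha_1,\hat\alpha_2,\hat\alpha_3\}$; similarly $\beta$ lifts to $\{\hat\beta_1,\hat\beta_2,\hat\beta_3\}$ cutting $\Sigma$ into $Q_1,Q_2$ with $\phi(Q_1)=Q_2$. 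Thus $\phi$ permutes the four pants-pairs with orbits $\{(P_1,Q_1),(P_2,Q_2)\}$ and $\{(P_1,Q_2),(P_2,Q_1)\}$, and the basic local fact I would record is that $\pi|_{P_1}\colon P_1\to S-\alpha$ is a homeomorphism, so a rectangle in a pants-pair $(P_a,Q_b)$ projects to a rectangle in $S$ whose four edges lie on the matching arcs, while conversely a rectangle in $S$ lifts to a $\phi$-pair of rectangles.

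For the forward implication, suppose $(V,W;\Sigma)$ satisfies the rectangle condition. By Lemma~\ref{separating disk with rectangle condition} I may assume the witnessing disks are all nonseparating, and by Lemma~\ref{nonseparating disks to compressions} they descend to systems of compressions, hence to bridge arc systems $\alpha,\beta$. I then simply project a single tight pants-pair: tightness of $(P_1,Q_1)$ gives, for each of the nine $4$-tuples $(\hat\alpha_i,\hat\alpha_j,\hat\beta_k,\hat\beta_l)$, a rectangle in $P_1\cap Q_1$, and $\pi|_{P_1}$ carries these to nine rectangles in $S$ realizing the nine downstairs types of Definition~\ref{for a 3-bridge decomposition using compressions}. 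This direction is the easy one precisely because upstairs one has four tight pairs but only one is needed downstairs.

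The reverse implication is where the content lies. Assuming $(T_1,T_2;S)$ is tight, I lift chosen systems of compressions to nonseparating disks, obtaining $\phi$-invariant pants decompositions $\mathcal P,\mathcal Q$, and I must show that \emph{all four} pants-pairs are tight. Lifting the nine downstairs rectangles yields, for each type, a $\phi$-pair of rectangles; but these populate only one of the two $\phi$-orbits of pants-pairs. Indeed the complementary regions of $\alpha\cup\beta$ in $S$ carry a checkerboard $2$-coloring: crossing any arc of $\alpha$ (respectively $\beta$) interchanges the two $P$-sheets (respectively $Q$-sheets) of the lift and so flips the orbit, while a rectangle of a fixed type lies in a single region and hence in a single color. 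Consequently naive lifting produces each type in only one of $(P_1,Q_1)$ and $(P_1,Q_2)$, and $\phi$-equivariance (which identifies $(P_1,Q_1)$ with $(P_2,Q_2)$ and $(P_1,Q_2)$ with $(P_2,Q_1)$) does not bridge the two orbits.

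The main obstacle, therefore, is to manufacture, for each of the nine types, a rectangle in the complementary color as well. My plan to overcome it is to exploit the branch points: near each branch point the four lifted sectors realize both orbits (the two sectors lying in $P_1$ are split, one into $Q_1$ and one into $Q_2$), so a rectangle that reaches a bridge arc near its branch-point endpoint can be rerouted onto the \emph{second sheet} of the invariant circle $\hat\beta_k=\pi^{-1}(\beta_k)$—equivalently, reflected across the corresponding fixed point—to give a rectangle of the same type in the opposite color. I would carry this out in the canonical position of Lemma~\ref{separating disk with rectangle condition} (the standard genus $2$ picture with $\phi$ as in Figure~\ref{Rectangle-15}), where the reflected rectangles should be visible by direct inspection exactly as the rectangle types are tracked in Figure~\ref{Rectangle-16}. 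Combining each lifted rectangle with its reflected partner, and using $\phi$ to pass between the two pairs inside each orbit, then yields all nine types in all four pants-pairs. I expect the delicate point to be verifying that the rerouted arc stays disjoint from $\partial\mathcal P\cup\partial\mathcal Q$ so that the reflected rectangle genuinely lands in the required pants-pair; here I anticipate the normal form forced by tightness (Proposition~\ref{L4}) to be what keeps the intersections minimal and makes the reflection argument go through.
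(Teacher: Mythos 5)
Your forward implication (Heegaard rectangle condition $\Rightarrow$ bridge rectangle condition) is essentially the paper's proof of the ``only if'' part: eliminate separating disks via Lemma~\ref{separating disk with rectangle condition}, descend the nonseparating disks to compressions via Lemma~\ref{nonseparating disks to compressions}, and push one tight pants pair down to $S$ using the fact that each sheet maps homeomorphically. That half of your proposal is sound and matches the paper.

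The reverse implication is where you and the paper part ways, and where your proposal has a genuine gap. The paper's proof of this direction is exactly the ``naive lifting'' you reject: compressions lift to two disks, each downstairs rectangle lifts to two rectangles, and the paper concludes. Your checkerboard observation is correct as far as it goes: the two lifts of a downstairs rectangle are exchanged by $\phi$, which swaps $P_1\leftrightarrow P_2$ and $Q_1\leftrightarrow Q_2$, so for each type they populate only the two pants-pairs in a single $\phi$-orbit, whereas Definition~\ref{Rectangle condition for Heegaard splitting} demands that \emph{all four} pairs $(P_a,Q_b)$ be tight; to conclude one therefore needs, for each of the nine $4$-tuples, downstairs rectangles of \emph{both} checkerboard colors of $S-(\alpha\cup\beta)$. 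You correctly identify this as the crux (the paper simply asserts the conclusion without addressing it), but your proposed remedy does not close it. ``Rerouting onto the second sheet'' or ``reflecting across the fixed point'' is never defined as an operation on rectangles, and no argument is given that its output is an embedded rectangle of the same type with interior disjoint from both curve systems; a rectangle of a given type need not pass anywhere near a branch point, so there may be nothing to reflect. The appeal to the canonical position of Lemma~\ref{separating disk with rectangle condition} is also misplaced --- that lemma normalizes \emph{disks upstairs} assuming the Heegaard rectangle condition, i.e.\ it belongs to the other direction --- and Proposition~\ref{L4} yields a normal form, not a second rectangle. What your plan needs, and does not supply, is a proof of the statement: \emph{if two systems of bridge arcs satisfy the rectangle condition, then each of the nine types is realized by rectangles lying in regions of both colors}. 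Until that (or some equivalent) is established, your reverse direction is incomplete --- and note that, by your own analysis, the paper's one-line argument for this direction establishes tightness of only two of the four pants-pairs per type, so it tacitly relies on the same missing statement.
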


\begin{proof}
First, we prove the `if' part, which is relatively straightforward. By the definition of the double branched covering, a compression in a rational 3-tangle lifts to two copies of an (nonseparating) essential disk in a genus two handlebody. This implies that pants decompositions in a bridge sphere $S$ induce pants decompositions in the Heegaard surface $\Sigma$.
Moreover, each rectangle type in the rectangle condition for the pants decompositions in $S$ lifts to two copies of the corresponding rectangle in $\Sigma$.
Therefore, the pants decompositions in the Heegaard surface $\Sigma$ induced from those in a bridge sphere $S$ satisfy the rectangle condition, establishing the `if' part of the theorem.

Now, we prove the `only if' part. Applying Lemma~\ref{separating disk with rectangle condition}, we may assume that there exist
collections of pairwise disjoint non-isotopic nonseparating essential disks $\mathcal{D}=\{D_1, D_2, D_3\}$ in $V$ and $\mathcal{E}=\{E_1, E_2, E_3\}$ in $W$
which give rise to pants decompositions $\mathcal{P}$ and $\mathcal{Q}$ of $\Sigma$ satisfying the rectangle condition.
By Lemma~\ref{nonseparating disks to compressions}, the nonseparating essential disks $D_i$ and $E_i$ for $i=1,2,3$ are mapped to compressions in the
rational 3-tangles $T_1$ and $T_2$ respectively under hyperelliptic involutions. Consequently, the pants decompositions $\mathcal{P}$ and $\mathcal{Q}$ in $\Sigma$
induce the pants decompositions $\mathcal{\check{P}}$ and $\mathcal{\check{Q}}$ in the bridge sphere $S$. Furthermore, under the hyperelliptic involutions,
each rectangle type in $\Sigma$ descends to a rectangle type in $S$. Therefore, the two pants decompositions $\mathcal{\check{P}}$ and $\mathcal{\check{Q}}$ satisfy the rectangle condition, and hence $(T_1, T_2; S)$ satisfies the rectangle condition.
\end{proof}

Now we relate two notions of the strong irreducibility for a genus $2$ Heegaard splitting of a 3-manifold $M$ and for a 3-bridge decomposition of a knot in $S^3$.
Recall that a Heegaard splitting $(V, W; \Sigma)$ of a 3-manifold $M$ is strongly irreducible if every pair consisting of an essential disk in $V$ and an essential disk in $W$ intersects. Similarly, an $n$-bridge decomposition $(T_1, T_2;S)$ of a knot $K$ in $S^3$ is strongly irreducible if every pair of a compression in $T_1$ and a compression in $T_2$ intersects. As with the rectangle condition, the notions of strong irreducibility on both sides are equivalent.

\begin{Thm}\label{main result-intro-2}
Let $(T_1, T_2; S)$ be a 3-bridge decomposition of a knot $K$ in $S^3$ and
$(V, W;\Sigma)$ the genus 2 Heegaard splitting obtained by the double branched covering of
$(T_1, T_2; S)$. Then $(V, W;\Sigma)$ is strongly irreducible if and only if $(T_1, T_2; S)$
is strongly irreducible.
\end{Thm}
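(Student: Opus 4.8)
The plan is to prove the biconditional by exploiting the double branched covering correspondence between compressions in the rational tangles and nonseparating essential disks in the handlebodies, mirroring the structure of the proof of Theorem~\ref{main result-intro-1}. The key structural facts are already in place: by Lemma~\ref{nonseparating disks to compressions}, every nonseparating essential disk in the genus 2 handlebody descends (under the relevant hyperelliptic involution) to a compression in the corresponding rational 3-tangle, and conversely every compression lifts to (two copies of) a nonseparating essential disk. So the disk/compression dictionary is a clean two-way correspondence, and intersection data is preserved: two disks in $\Sigma$ intersect if and only if their image compressions in $S$ intersect.

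First I would prove the `if' direction (contrapositive): suppose $(V,W;\Sigma)$ is \emph{not} strongly irreducible, so there exist essential disks $D\subset V$ and $E\subset W$ with $D\cap E=\varnothing$. I would then push these disks down to the tangle side. A compression in $T_i$ lifts to two parallel copies of a nonseparating essential disk in the handlebody, so conversely a disjoint pair $(D,E)$ in the cover should project to a disjoint compression in $T_1$ and a disjoint compression in $T_2$, witnessing that $(T_1,T_2;S)$ is not strongly irreducible. The subtlety here is that an \emph{arbitrary} essential disk in $V$ need not itself be invariant under the involution, and a separating essential disk does not descend directly to a single compression; here I would invoke the disk-surgery/band-sum machinery of Lemma~\ref{nonseparating disks by bandsum} and Lemma~\ref{nonseparating disks to compressions} to replace $D$ by a nonseparating disk (or argue that a separating essential disk in $V$ is a band-sum of two nonseparating ones, each of which can be made disjoint from $E$) so that the descent to a compression is legitimate.

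For the `only if' direction I would run the symmetric argument upstairs: assume $(T_1,T_2;S)$ is not strongly irreducible, so there are disjoint compressions $c_1\subset T_1$ and $c_2\subset T_2$. Lifting $c_1$ and $c_2$ through the double branched cover produces nonseparating essential disks $\widetilde{c_1}\subset V$ and $\widetilde{c_2}\subset W$, and since the branch locus (the three arcs of $\mathrm{Fix}(\phi)$ inside the compression/disk system, as in Figure~\ref{Rectangle-15}) can be kept disjoint from the disks by the isotopy in Lemma~\ref{nonseparating disks to compressions}, disjointness of $c_1$ and $c_2$ in $S$ lifts to disjointness of $\widetilde{c_1}$ and $\widetilde{c_2}$ in $\Sigma$. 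This shows $(V,W;\Sigma)$ is not strongly irreducible, giving the contrapositive of the reverse implication.

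The main obstacle I anticipate is the handling of \emph{separating} essential disks and, more generally, the fact that the hyperelliptic involution need not fix an arbitrary given disk setwise. A general essential disk in $V$ realizing a weak reduction is not automatically in the invariant nonseparating form to which Lemma~\ref{nonseparating disks to compressions} applies, so the crux is to argue that one may always isotope or surger the witnessing disk pair into a form compatible with the covering (using that $\mathcal{D}$ contains at most one separating disk and that disk-surgery along a disjoint disk preserves disjointness from the other side). Once the disk pair is put into equivariant nonseparating position without creating new intersections with the opposite side, the rest of the argument is a direct transcription of intersection data through the covering map, exactly as in the proof of Theorem~\ref{main result-intro-1}.
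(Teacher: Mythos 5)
Your `only if' direction (disjoint compressions in $T_1$ and $T_2$ lift to disjoint essential disks in $V$ and $W$) is exactly the paper's argument for that direction and is fine. The genuine gap is in your `if' direction. You take a disjoint pair $D\subset V$, $E\subset W$ witnessing weak reducibility upstairs and propose to push it down through the involution using Lemma~\ref{nonseparating disks by bandsum} and Lemma~\ref{nonseparating disks to compressions}. But those are one-disk-at-a-time statements: Lemma~\ref{nonseparating disks to compressions} isotopes a \emph{single} nonseparating disk into a position where it descends to a compression, and the isotopy it produces (coming from an inductive disk-surgery/band-sum reconstruction) carries no control on how the boundary curve moves in $\Sigma$. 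Applying it to $D$ and then to $E$ can introduce intersections between $\partial D$ and $\partial E$, so the two images downstairs need not be disjoint. Your sentence ``once the disk pair is put into equivariant nonseparating position without creating new intersections with the opposite side'' is precisely the statement that has to be proved; it amounts to an equivariant, two-sided version of Haken's lemma for the hyperelliptic involution (in the spirit of Meeks--Yau), which is not a formal consequence of anything in the paper. The easy part of your reduction is legitimate (if $D$ is separating, it cuts $V$ into two solid tori, $\partial E$ lies in one of the two once-punctured tori of $\Sigma\setminus\partial D$, and a meridian of the solid torus on the other side is a nonseparating disk disjoint from $E$); the simultaneous equivariance is the hard part, and the proposal does not supply it.

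The paper avoids this issue entirely by a global argument in that direction: if $(V,W;\Sigma)$ is weakly reducible, then, since a weakly reducible genus $2$ Heegaard splitting is reducible, the manifold is $S^3$, a lens space (connected sum with a genus $1$ splitting of $S^3$), or a connected sum of two lens spaces; by the classification of knots with such double branched covers, $K$ must then be the unknot, a $2$-bridge knot, or a connected sum of two $2$-bridge knots, and every $3$-bridge decomposition of such a knot is weakly reducible, contradicting the assumed strong irreducibility of $(T_1,T_2;S)$. If you want to keep your covering-space approach for this direction, you would need to prove or cite an equivariant disk theorem guaranteeing a $\phi$-compatible disjoint disk pair; as written, that step would fail.
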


\begin{proof}
First, we prove the `only if' part, which is easier to prove. Suppose for contradiction that $(T_1, T_2; S)$
is not strongly irreducible. Then there exists a compression $D$ in $T_1$ and a compression $E$ in $T_2$ which
are disjoint. By the property of the double branched covering, each compression of $D$ and $E$ lifts to two copies of a nonseparating essential disk in $V$ and $W$, respectively.
Therefore, the corresponding essential disks in $V$ and $W$ are disjoint, contradicting the assumption that $(V, W; \Sigma)$ is strongly irreducible. Hence, $(T_1, T_2; S)$ must be strongly irreducible.

Next, we prove the `if' part. Suppose for contradiction that $(V, W;\Sigma)$ is not strongly irreducible, i.e.,
it is weakly reducible. It is well known that a genus 2 weakly
reducible Heegaard splitting is a reducible Heegaard splitting.
Therefore, $(V, W;\Sigma)$ is one of
the following:

\begin{itemize}
\item[(i)] the connected sum of two genus 1 Heegaard splittings of $S^3$,
\item[(ii)] the connected sum of a genus 1 Heegaard splitting of a lens space and a genus 1
Heegaard splitting of $S^3$,
\item[(iii)] the connected sum of two genus 1 Heegaard splittings of lens spaces.
\end{itemize}

For (i), (ii), (iii),  the corresponding $3$-bridge decomposition $(T_1, T_2; S)$ would be a $3$-bridge decomposition of an unknot, a 2-bridge
knot, a connected sum of two 2-bridge knots, respectively. For all such knots, a 3-bridge
decomposition is always weakly reducible, a contradiction.
\end{proof}

\begin{figure}[tbp]
\centering
\includegraphics[width = 0.9\textwidth]{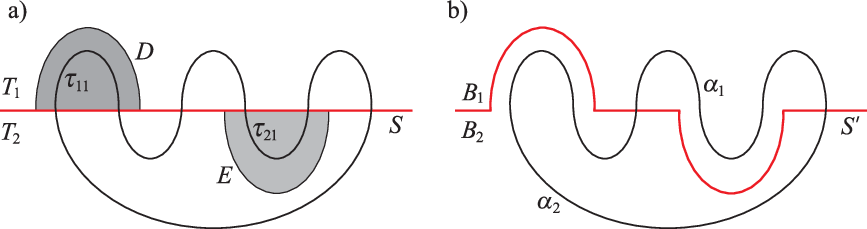}
\caption{A 3-bridge decomposition $(T_1, T_2;S)$ of a prime knot $K$ in $S^3$ that is not strongly irreducible.}
\label{Rectangle-2}
\end{figure}

Lastly, we will show that for a 3-bridge knot $K$, the strong irreducibility of a 3-bridge decomposition of $K$
is equivalent to the primeness of $K$.


\begin{Thm}\label{main result-intro3}
A 3-bridge decomposition of a 3-bridge knot $K$ in $S^3$ is strongly irreducible if and only if
$K$ is prime.
\end{Thm}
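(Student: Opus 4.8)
The plan is to reduce the statement to the double branched cover and to feed it into Theorem~\ref{main result-intro-2} together with the structural trichotomy already isolated in the proof of that theorem. Let $M$ denote the double branched cover of $S^3$ along $K$, so that $(V,W;\Sigma)$ is a genus~$2$ Heegaard splitting of $M$. By Theorem~\ref{main result-intro-2}, $(T_1,T_2;S)$ is strongly irreducible if and only if $(V,W;\Sigma)$ is; hence it suffices to prove that $(V,W;\Sigma)$ is strongly irreducible exactly when $K$ is prime. The key input is the fact recalled in the proof of Theorem~\ref{main result-intro-2}: since the splitting has genus~$2$, being weakly reducible is the same as being reducible, and a reducible $(V,W;\Sigma)$ falls into the three cases (i)--(iii), in which $K$ is the unknot, a $2$-bridge knot, or a connected sum of two $2$-bridge knots, respectively.

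For the \emph{only if} direction I would argue the contrapositive, proving that if $K$ is composite then $(T_1,T_2;S)$ is weakly reducible. Write $K=K_1\#K_2$ with both $K_i$ nontrivial, and let $P$ be a connected-sum $2$-sphere meeting $K$ transversally in two points. The double branched cover of $S^2$ over two points is again a $2$-sphere (Riemann--Hurwitz), so $P$ lifts to a $2$-sphere $\widetilde P$ in $M$; since each summand double-covers a nontrivial knotted ball-arc pair and is therefore not a $3$-ball, $\widetilde P$ bounds no ball on either side and $M=M_1\#M_2$ is a reducible $3$-manifold. As noted in the introduction, a $3$-manifold admitting a strongly irreducible Heegaard splitting is irreducible; hence $(V,W;\Sigma)$ cannot be strongly irreducible, i.e.\ it is weakly reducible, and by Theorem~\ref{main result-intro-2} so is $(T_1,T_2;S)$.

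For the \emph{if} direction I would again argue the contrapositive. Assume $(T_1,T_2;S)$ is not strongly irreducible. By Theorem~\ref{main result-intro-2}, $(V,W;\Sigma)$ is weakly reducible, hence reducible, so the trichotomy (i)--(iii) applies and $K$ is the unknot, a $2$-bridge knot, or a connected sum of two $2$-bridge knots. Here the hypothesis that $K$ is a genuine $3$-bridge knot enters decisively: $K$ then admits no $2$-bridge (and a fortiori no $1$-bridge) decomposition, which rules out the first two alternatives. Thus $K$ must be a connected sum of two $2$-bridge knots, and in particular $K$ is composite. Combining the two directions yields the desired equivalence.

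The substantive geometric work is carried entirely by Theorem~\ref{main result-intro-2} and the genus~$2$ reducibility dictionary, so the content of the present argument is chiefly the bookkeeping of the three cases; the one step that genuinely requires attention is the use of the $3$-bridge hypothesis to discard cases (i) and (ii), since a mere knot with a $3$-bridge decomposition need not be prime-or-composite in the clean dichotomy we exploit. In the write-up I would foreground Schubert's additivity of bridge number under connected sum, which ensures conversely that a composite $3$-bridge knot is exactly a connected sum of two $2$-bridge knots; this is what makes the notions ``weakly reducible,'' ``case (iii),'' and ``composite'' coincide precisely under the standing hypothesis.
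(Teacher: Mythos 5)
Your proposal is correct in its logical skeleton, but it takes a genuinely different route from the paper's proof, which never leaves $S^3$. For the ``if'' direction the paper performs the weak reduction downstairs: disjoint compressions $D\subset T_1$ and $E\subset T_2$ produce a $2$-sphere $S'$ decomposing $(S^3,K)$ into two ball-arc pairs $(B_1,\alpha_1)$ and $(B_2,\alpha_2)$, and the three cases (both arcs trivial, exactly one trivial, both knotted) contradict nontriviality, $3$-bridgeness, and primeness of $K$, respectively. For the ``only if'' direction the paper cites the classical results (Schubert, Doll, Hayashi--Shimokawa, Schultens) that a composite knot admits a decomposing sphere meeting the bridge sphere in a single circle, so the bridge decomposition splits into bridge decompositions of the summands and is weakly reducible. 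You instead route both directions through the double branched cover, using Theorem~\ref{main result-intro-2}, the fact (recalled in the introduction) that a manifold admitting a strongly irreducible splitting is irreducible, and the genus-$2$ trichotomy. This buys economy: given Theorem~\ref{main result-intro-2}, the case bookkeeping is short and the $3$-bridge hypothesis enters cleanly, exactly as you say via Schubert additivity. It costs two things worth flagging. First, you import facts the paper never states: concluding that the lift of the summing sphere is essential requires knowing that the double branched cover of a knotted ball-arc pair is not a ball (a Smith-conjecture-type input), and the translation of cases (i)--(iii) into ``$K$ is the unknot / a $2$-bridge knot / a sum of two $2$-bridge knots'' is an unstated dictionary buried inside the proof of Theorem~\ref{main result-intro-2}, not a citable lemma. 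Second, there is a near-circularity you should acknowledge: the direction of Theorem~\ref{main result-intro-2} that your ``only if'' argument needs ($(V,W;\Sigma)$ weakly reducible $\Rightarrow$ $(T_1,T_2;S)$ weakly reducible) is proved in the paper precisely by asserting that every $3$-bridge decomposition of a connected sum of two $2$-bridge knots is weakly reducible --- which, by the additivity you invoke, is essentially the statement you are proving. Your argument is not formally circular, since that assertion rests on the same external literature the paper cites directly, but once unwound it recovers by a long detour what the paper's proof obtains in one step from those citations.
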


\begin{proof}
Let $(T_1, T_2;S)$ be a 3-bridge decomposition of a 3-bridge knot $K$, where
the bridge sphere $S$ decomposes $(S^3, K)$ into two rational tangles $T_1=(B_1, \tau_1)$ and
$T_2=(B_2, \tau_2)$.

We first prove the `if' part. Suppose $(T_1, T_2;S)$ is not strongly irreducible.
Then $(T_1, T_2;S)$ is weakly reducible. Therefore, there exist compressions $D$ in $T_1$ and $E$ in $T_2$ which are disjoint
each other.
Figure~\ref{Rectangle-2}a shows a schematic picture for the compressions $D$ and $E$ in $(T_1, T_2;S)$.
We can observe that there are tangle arcs $\tau_{11}$ and $\tau_{21}$ of $\tau_1$ and $\tau_2$ respectively which are
separated by $D$ and $E$ from the other two tangle arcs of $\tau_1$ and $\tau_2$ respectively.
Now we perform the weak reduction so that, as shown in Figure~\ref{Rectangle-2}b, we get a 2-sphere $S'$ decomposing $(S^3, K)$ into
$(B_1, \alpha_1)$ and $(B_2, \alpha_2)$, where $B_i$ is a 3-ball and $\alpha_i$ is an arc in $B_i$ for $i=1,2$.
For each $i (i=1,2)$ we attach $(B_0, \alpha_0)$ of a 3-ball $B_0$ and a trivial arc $\alpha_0$ to $(B_i, \alpha_i)$.
Then we get a knot $K_i=\alpha_i\cup \alpha_0$ in $S^3$. We call $K_i$ the knot associated to $\alpha_i$.

Now there are three cases for $\alpha_1$ and $\alpha_2$: (1) both are trivial arcs, (2) only one of $\alpha_1$ and $\alpha_2$ is trivial, (3) both are knotted arcs.

Case (1): Suppose both $\alpha_1$ and $\alpha_2$ are trivial arcs. Then $K$ is the unknot, a contradiction.

Case (2): Suppose only one of $\alpha_1$ and $\alpha_2$, say $\alpha_1$, is trivial. Then $K$ admits a 2-bridge
decomposition implying that $K$ is a 2-bridge knot, a contradiction.

Case (3): Suppose both $\alpha_1$ and $\alpha_2$ are knotted arcs. Then $K_1$ and $K_2$ are knotted, and $K=K_1 \# K_2$,
implying that $K$ is a composite knot, a contradiction.

Now we prove the `only if' part. Suppose for a contradiction that $K$ is a composite knot. It is known, for example
in \cite{Sb54}, \cite{D92}, \cite{HS01}, \cite{Sl03}, that
for any bridge decomposition of a composite knot, there exists a decomposing sphere $F$ of
the connected sum such that $F$ intersects the bridge sphere in a single circle, and hence the
bridge decomposition can be decomposed into two bridge decompositions of the summands.
Thus, in our case, $F$ decomposes $(T_1, T_2; S)$ into two bridge decompositions of
2-bridge knots, and therefore it is weakly reducible, a contradiction.
\end{proof}

\section{Strongly irreducible 3-bridge decomposition with no rectangle condition }\label{B4}

In this section, we prove Theorem~\ref{main result}, which is the main result of this paper.
To prove Theorem~\ref{main result}, we consider the knot $8_5$ in the Rolfsen knot table of \cite{R76},
which is known to be a prime 3-bridge knot.
Figure~\ref{Rectangle-7}a shows a diagram of $8_5$ and also a 3-bridge decomposition $(T_1, T_{\epsilon};S)$ of the knot $8_5$.
Note that the diagram of $8_5$ in Figure~\ref{Rectangle-7}a is exactly the same as that in the Rolfsen knot table.
Figure~\ref{Rectangle-7}b illustrates a system $\del=\{\del_1, \del_2, \del_3\}$ of the red, green, blue bridge arcs for $T_1$
and a system $\ep=\{\ep_1, \ep_2, \ep_3\}$ of the trivial bridge arcs for $T_\ep$.
Also Figure~\ref{Rectangle-21} shows the double branched cover of the 3-bridge decomposition $(T_1, T_{\epsilon};S)$ of the knot $8_5$ in Figure~\ref{Rectangle-7}b, where
the genus 2 surface $\tilde{S}$, and the simple closed curves $\tilde{\ep_1}, \tilde{\ep_2}, \tilde{\ep_3}$ and $\tilde{\del_1}, \tilde{\del_2}, \tilde{\del_3}$ are double branched covers of
the bridge sphere $S$, and the bridge arcs $\ep_1, \ep_2, \ep_3$ and $\del_1, \del_2, \del_3$ respectively.

\begin{figure}[tbp]
\centering
\includegraphics[width = 0.8\textwidth]{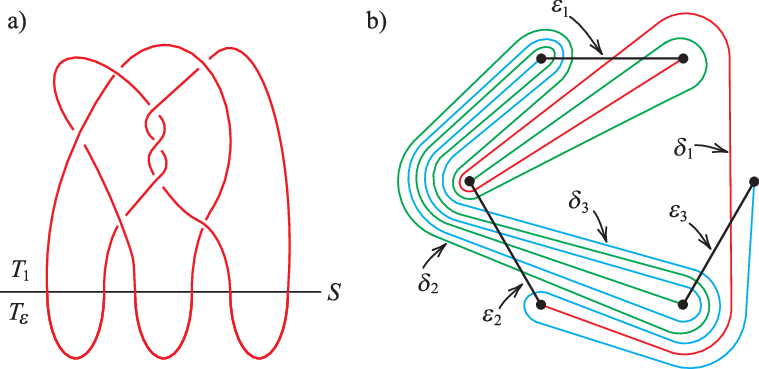}
\caption{(a) The 3-bridge decomposition $(T_1, T_{\epsilon};S)$ of the Knot $8_5$ in the knot table and (b) the bridge arc systems $\del=\{\del_1, \del_2, \del_3\}$ and $\ep=\{\ep_1, \ep_2, \ep_3\}$ for $T_1$ and $T_\ep$ respectively.}
\label{Rectangle-7}
\end{figure}

\begin{figure}[tbp]
\centering
\includegraphics[width = 0.8\textwidth]{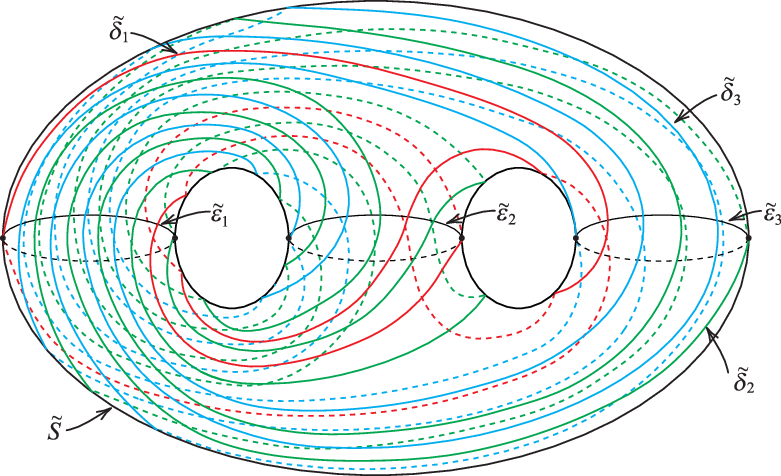}
\caption{Double branched cover of the 3-bridge decomposition $(T_1, T_{\epsilon};S)$ of the Knot $8_5$ in Figure~\ref{Rectangle-7}b.}
\label{Rectangle-21}
\end{figure}

We will show that the 3-bridge decomposition of $8_5$ in Figure~\ref{Rectangle-7}a is strongly irreducible but does not satisfy the rectangle condition,
To do this, we present the following theorems, which are the main results of this paper and prove Theorems~\ref{main result} and ~\ref{main result2}.

\begin{Thm}\label{main result3}
The $3$-bridge decomposition of the knot $8_5$ in Figure~\ref{Rectangle-7}a is strongly irreducible but does not satisfy the rectangle condition.
\end{Thm}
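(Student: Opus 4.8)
The plan is to establish the two halves of Theorem~\ref{main result3} by separate arguments: strong irreducibility via Theorem~\ref{main result-intro3}, and the failure of the rectangle condition via a structural analysis of bridge arc systems using the normal form and wave machinery developed in Section~\ref{B2}.

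\textbf{Strong irreducibility.} For the strong irreducibility of the $3$-bridge decomposition $(T_1, T_\ep; S)$ of $8_5$, I would invoke Theorem~\ref{main result-intro3}, which asserts that a $3$-bridge decomposition of a $3$-bridge knot is strongly irreducible if and only if the knot is prime. Since $8_5$ is a prime $3$-bridge knot (a standard fact from the knot tables of \cite{R76}), the equivalence immediately yields that $(T_1, T_\ep; S)$ is strongly irreducible. This reduces the first assertion to citing the already-established primeness of $8_5$, so essentially no further work is needed here.

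\textbf{Failure of the rectangle condition.} The substantive and harder part is proving that no pair of systems of bridge arcs for $T_1$ and $T_\ep$ can satisfy the rectangle condition. I would argue by contradiction: suppose there exist systems $\alpha=\{\alpha_1,\alpha_2,\alpha_3\}$ for $T_1$ and $\beta=\{\beta_1,\beta_2,\beta_3\}$ for $T_\ep$ satisfying the rectangle condition. By Proposition~\ref{L4}, $\alpha$ and $\beta$ must then be in normal form with respect to each other. The key simplifying device is Theorem~\ref{unique normal form}: since $T_\ep$ is the trivial rational $3$-tangle, the unique normal form with respect to the trivial system $\ep$ is $\ep$ itself, so I may assume $\beta=\ep$ and that $\alpha=\del$ (the system exhibited in Figure~\ref{Rectangle-7}b) up to the normalizing isotopy. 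The argument then becomes a concrete combinatorial check on the specific diagram of $8_5$: I would examine the minimal intersection pattern of $\del$ against $\ep$ on the bridge sphere $S$ and verify that at least one of the nine required $4$-tuples $(\del_i,\del_j,\ep_k,\ep_l)$ fails to bound an embedded rectangle disjoint from $\del\cup\ep$. The wave analysis (Lemma~\ref{wave existence}) enters here to control how any competing system of bridge arcs for $T_1$ must intersect $\ep$, ruling out the possibility that a different choice of $\alpha$ could recover the missing rectangle.

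\textbf{The main obstacle} will be making the combinatorial verification exhaustive rather than merely checking the displayed system $\del$. The rectangle condition for a bridge decomposition quantifies over \emph{all} pairs of pants decompositions, and there are infinitely many systems of bridge arcs for $T_1$ by Corollary~\ref{infinitely many systems of bridge arcs}. The normal form reduction via Theorem~\ref{unique normal form} pins down $\beta=\ep$ and thereby collapses the $T_\ep$ side entirely, but I still must argue that \emph{every} normal-form system $\alpha$ for $T_1$ inherits the obstruction from the fixed diagram of $8_5$. I would handle this by identifying an intrinsic feature of the intersection of $\del$ with $\ep$ on $S$---a pair of bridge arcs whose endpoints are so positioned that no rectangle of one of the nine types can exist---and then showing via the wave argument that this feature is forced in any normal form, since any deviation would either create a reducing bigon (contradicting minimal position) or a wave that propagates the same obstruction. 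The careful bookkeeping needed to certify the single failing $4$-tuple, and to confirm its persistence across all admissible $\alpha$, is where the genuine effort lies; this is deferred to the detailed diagram analysis accompanying Figures~\ref{Rectangle-7} and \ref{Rectangle-21}.
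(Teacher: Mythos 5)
The strong irreducibility half of your proposal is correct and identical to the paper's: primeness of $8_5$ together with Theorem~\ref{main result-intro3} settles it. The rectangle-condition half, however, has a genuine gap at its central reduction. You invoke Proposition~\ref{L4} to get that $\alpha$ and $\beta$ are in normal form with respect to each other, and then apply Theorem~\ref{unique normal form} to conclude that you may take $\beta=\ep$. These are two different normal-form conditions: Proposition~\ref{L4} makes $\beta$ normal with respect to $\alpha$, the system on the \emph{other} side of the bridge sphere, whereas Theorem~\ref{unique normal form} requires $\beta$ to be in normal form with respect to $\ep$ --- an intersection condition between $\beta$ and $\ep$, two systems of the \emph{same} tangle $T_\ep$, about which the rectangle condition for $(\alpha,\beta)$ says nothing. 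So the reduction to $\beta=\ep$ is a non sequitur. In fact the case $\beta$ not isotopic to $\ep$ is exactly where the paper does its hard work: by the contrapositive of Theorem~\ref{unique normal form}, such a $\beta$ fails to be in normal form with respect to $\ep$, which produces two adjacent intersection points $p,q$ of some $\be_{j_0}$ on some $\ep_i$; the paper then classifies the possible configurations at $p,q$ (Lemma~\ref{adjacent intersections}) and runs a case analysis over $\eta\subseteq\ep_1,\ep_2,\ep_3$ with subcases (i), (ii), (iii) to reach a contradiction in each. Your proposal eliminates this entire case by the misapplication above, so the bulk of the actual proof is missing.

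The second gap is the assumption $\alpha=\del$ ``up to the normalizing isotopy,'' which nothing justifies: there are infinitely many systems of bridge arcs for $T_1$ (Corollary~\ref{infinitely many systems of bridge arcs}), and being in normal form with respect to $\beta$ does not pin $\alpha$ to the displayed system $\del$. The paper never constrains $\alpha$ at all. Instead it proves Lemma~\ref{any bridge arc connects}: if $(\alpha,\beta)$ satisfy the rectangle condition, then \emph{every} bridge arc of $T_1$ --- in particular each of the fixed arcs $\del_1,\del_2,\del_3$ of Figure~\ref{Rectangle-7}b, regardless of what $\alpha$ is --- must contain, for every pair $\{i,j\}$, a subarc joining $\be_i$ and $\be_j$ with interior disjoint from $\be$, and must intersect every essential arc based at any $\be_i$. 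This is the device that lets the fixed diagram of $8_5$ serve as a probe against an arbitrary pair $(\alpha,\beta)$: when $\beta\cong\ep$, the observation that $\del_2$ and $\del_3$ have no subarc joining $\ep_1$ to $\ep_3$ avoiding $\ep_2$ gives the contradiction, and when $\beta\not\cong\ep$, the requirement that all three $\del_i$ meet the essential arc $\eta$ feeds the case analysis. Your appeal to Lemma~\ref{wave existence} alone does not yield these constraints; to make your probe idea rigorous you would need to formulate and prove a statement of the strength of Lemma~\ref{any bridge arc connects}, which is the key missing ingredient.
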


\noindent By Theorems~\ref{main result-intro-1} and ~\ref{main result-intro-2}, we obtain the following from Theorem~\ref{main result3}

\begin{Thm}\label{main result4}
The genus 2 Heegaard splitting obtained by the double branched covering of the $3$-bridge decomposition of the knot $8_5$ in Figure~\ref{Rectangle-7}a is strongly irreducible but does not satisfy the rectangle condition.
\end{Thm}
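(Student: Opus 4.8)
The plan is to derive Theorem~\ref{main result4} entirely as a formal consequence of the two bridging results, Theorem~\ref{main result-intro-1} and Theorem~\ref{main result-intro-2}, together with the already-established Theorem~\ref{main result3}. The key observation is that Theorem~\ref{main result3} does all of the genuine work: it asserts that the specific $3$-bridge decomposition $(T_1, T_\epsilon; S)$ of the knot $8_5$ drawn in Figure~\ref{Rectangle-7}a is strongly irreducible and yet fails the rectangle condition. Once this is granted, the present statement is a pure transport-of-structure argument across the double branched covering, and no new $3$-dimensional topology needs to be invoked.

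Concretely, I would argue as follows. Let $(V, W; \Sigma)$ denote the genus $2$ Heegaard splitting obtained as the double branched cover of $(T_1, T_\epsilon; S)$, so that the hypotheses of Theorems~\ref{main result-intro-1} and~\ref{main result-intro-2} are met verbatim. By Theorem~\ref{main result3}, the decomposition $(T_1, T_\epsilon; S)$ is strongly irreducible. Applying the equivalence in Theorem~\ref{main result-intro-2} (the ``if'' direction, transporting strong irreducibility from the tangle side to the Heegaard side), we conclude that $(V, W; \Sigma)$ is strongly irreducible. Next, again by Theorem~\ref{main result3}, the decomposition $(T_1, T_\epsilon; S)$ does \emph{not} satisfy the rectangle condition. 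Theorem~\ref{main result-intro-1} states that $(V, W; \Sigma)$ satisfies the rectangle condition if and only if $(T_1, T_\epsilon; S)$ does; taking the contrapositive of the ``only if'' direction, the failure of the rectangle condition downstairs forces its failure upstairs, so $(V, W; \Sigma)$ does not satisfy the rectangle condition. Combining the two conclusions yields the desired statement.

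The main subtlety I would flag is not an obstacle in the proof itself but rather a bookkeeping point about applying the biconditionals in the correct logical direction: Theorem~\ref{main result-intro-2} gives equivalence of strong irreducibility, so it transports \emph{that} property in either direction, while Theorem~\ref{main result-intro-1} gives equivalence of the rectangle condition, whose \emph{negation} is what must be transported. Because both cited theorems are stated as ``if and only if,'' each direction is available, and there is no danger of an illicit converse. The only thing one must verify is that the genus $2$ Heegaard splitting named in the present theorem is literally the double branched cover appearing in the hypotheses of the two equivalence theorems, which it is by construction. Thus the entire argument is a two-line syllogism, and I expect no real difficulty beyond citing Theorem~\ref{main result3} for the input data and the two equivalences for the transfer.
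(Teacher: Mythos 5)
Your proposal is correct and matches the paper's argument exactly: the paper also derives this theorem directly from Theorem~\ref{main result3} by transporting strong irreducibility via Theorem~\ref{main result-intro-2} and the failure of the rectangle condition via Theorem~\ref{main result-intro-1}.
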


For the strong irreducibility of the $3$-bridge decomposition of the knot $8_5$ in Theorem~\ref{main result3}, it follows
immediately from Theorem~\ref{main result-intro3}. Regarding the proof of Theorem~\ref{main result3} for rectangle condition, we need some theory.
For the systems $\del=\{\del_1, \del_2, \del_3\}$ and $\ep=\{\ep_1, \ep_2, \ep_2\}$ of bridge arcs for $T_1$ and $T_\ep$ respectively in Figure~\ref{Rectangle-7}b
it follows that $\del_2$ and $\del_3$ do not have subarcs connecting the bridge arcs $\ep_1$ and $\ep_3$
without intersecting $\ep_2$. Therefore $\del$ and $\ep$ do not satisfy the rectangle condition.
By Lemma~\ref{infinitely many systems of bridge arcs}, there are infinitely many systems of bridge arcs in each of the tangles $T_1$ and $T_\ep$.
Thus in order to show that any pair of systems of bridge arcs for $T_1$ and $T_\ep$ do not satisfy the rectangle condition, we need some lemmas.

First, suppose $\beta=\{\beta_1,\beta_2,\beta_3\}$ is a system of bridge arcs for $T_{\epsilon}$ that is not isotopic to $\ep=\{\ep_1,\ep_2,\ep_3\}$.
It follows from Theorem~\ref{unique normal form} that $\beta$ is not in a normal form with respect to $\ep$.
Therefore, there exist two adjacent intersection points $p$ and $q$ of $\beta_{j_0}$ in $\ep_i$ for some $i, j_0\in\{1, 2, 3\}$.
The following lemma restricts subarcs of $\beta_{j_0}$ whose endpoints are $p$ or $q$.

\begin{figure}[tbp]
\centering
\includegraphics[width = 0.75\textwidth]{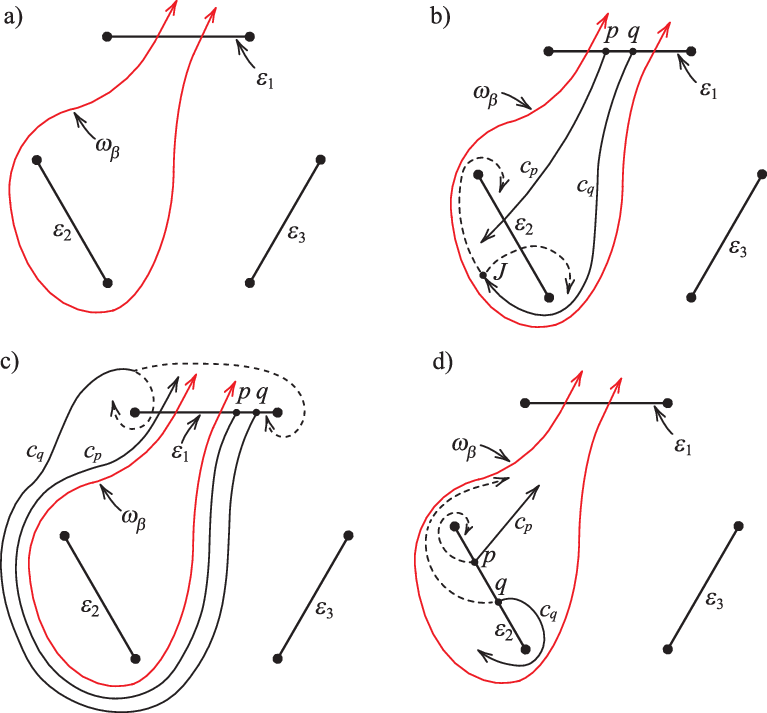}
\caption{A wave $\omega_\be$ of $\be_1$ which is based at $\ep_1$, and the two curves $c_p$ and $c_q$ starting from the adjacent points $p$ and $q$ which lie in $\ep_1$
and $\ep_2$.}
\label{Rectangle-19}
\end{figure}

\begin{Lem}\label{adjacent intersections}
Suppose that $\beta=\{\beta_1,\beta_2,\beta_3\}$ is a system of bridge arcs for $T_{\epsilon}$ that is not isotopic to $\ep=\{\ep_1,\ep_2,\ep_3\}$.
Let $\{p, q\}$ be a pair of adjacent intersection points of $\beta$ in $\ep_{i}$ which belong to the same bridge arc $\be_{j_0}$ for some $i, j_0\in\{1, 2, 3\}$.
Then $\{p, q\}$ satisfies one of the following conditions:\\
(i) $p$ and $q$ are the endpoints of parallel subarcs of $\be_{j_0}$ connecting $\ep_{i}$ and $\ep_j$ for $i, j\in\{1, 2, 3\}$.\\
(ii) $p$ and $q$ are the endpoints of a wave based at $\ep_{i}$, which is a subarc of $\be_{j_0}$.\\
(iii) One of $p$ and $q$ is an endpoint of a wave $\be_{j_0}$ based at $\ep_{i}$ and the other is an endpoint of a subarc of $\be_{j_0}$ connecting $\ep_{i}$ and $\ep_j$ for distinct $i, j\in\{1, 2, 3\}$
\end{Lem}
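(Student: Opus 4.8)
The plan is to classify the pair $\{p,q\}$ by analyzing directly the two subarcs of $\be_{j_0}$ that emanate from these adjacent points, showing that minimal position together with the rigidity of the trivial tangle leaves exactly the three listed configurations. First I would record the standing data. Since $\be$ is not isotopic to $\ep$, Theorem~\ref{unique normal form} forces $\be$ to violate the normal form condition with respect to $\ep$, which is precisely what produces the adjacent pair $p,q$ on some $\ep_i$ lying on a common $\be_{j_0}$. Let $\eta\subset\ep_i$ be the short subarc bounded by $p$ and $q$; by adjacency its interior meets no component of $\be$, and, exactly as in the proof of Proposition~\ref{L4}, minimality of $\be\cap\ep$ guarantees that $\eta$ bounds no bigon with any subarc of $\be_{j_0}$. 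I would also note that $\be_{j_0}$ cannot be isotopic to any $\ep_k$ (otherwise in minimal position it would be disjoint from $\ep$, contradicting the existence of $p,q$), so by Lemma~\ref{wave existence} the relevant local objects are genuinely waves and connecting arcs.

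Next I would cut $\partial B$ along a regular neighborhood $N(\ep)$. Using the identification of each $\partial E_k$ with $\partial N(\ep_k)$ recalled just before Lemma~\ref{wave existence}, the complement is realized as a pair of pants $P$ whose three boundary circles correspond to $\ep_1,\ep_2,\ep_3$. Each component of $\be_{j_0}\cap P$ is then a properly embedded arc, and minimal position rules out boundary-parallel components (these would give bigons of $\be_{j_0}$ with some $\ep_k$), so every component is essential. Hence each is isotopic to one of the six standard arcs of a pair of pants: a connecting arc joining two distinct $\ep_k,\ep_l$, or a wave arc returning to a single $\ep_k$ and separating the other two, which is exactly a wave based at $\ep_k$. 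Let $c_p$ and $c_q$ be the components of $\be_{j_0}\cap P$ incident to $p$ and $q$ on the side of $\eta$. Because they issue from adjacent feet across the clean arc $\eta$ and $\be_{j_0}$ is embedded, they are disjoint and begin parallel, cobounding an embedded strip with $\eta$.

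The heart of the argument is a parallel-strip analysis of this strip. If $p$ and $q$ are joined by a single component of $\be_{j_0}\cap P$, that component is a wave based at $\ep_i$, giving case (ii). Otherwise $c_p\neq c_q$, and following the strip outward its long sides stay parallel until one first returns to $\ep$: if the strip closes with its fourth side on a single circle $\ep_j$ ($j\neq i$), then $c_p,c_q$ are parallel connecting arcs, giving case (i); if one side closes into a wave based at $\ep_i$ while the other runs to a distinct $\ep_j$, we land in case (iii). The main obstacle is excluding the remaining combinations, namely $c_p$ and $c_q$ terminating on two different circles $\ep_j\neq\ep_k$ and the occurrence of two distinct waves. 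Here I would invoke the separating property of a wave based at $\ep_i$ noted after the definition of a wave: such a wave separates the remaining two circles, so a side that leaves the strip and returns to $\ep_i$ partitions $\ep_j$ and $\ep_k$ and pins down where its companion can terminate; combined with embeddedness of $\be_{j_0}$ and the no-bigon property of $\eta$, this forces the two arcs either to remain parallel to a common circle or to split in the single admissible wave/connecting pattern. Verifying that each alternative would produce a bigon with $\eta$ or a self-intersection of $\be_{j_0}$ is the delicate bookkeeping step, and it is exactly where the rigidity of the trivial tangle $T=(B,\ep)$ is essential.
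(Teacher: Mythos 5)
Your reduction to the pair of pants $\overline{\partial B - N(\ep)}$ and the classification of the components of $\be_{j_0}$ cut along $N(\ep)$ into connecting arcs and arcs returning to a single circle is sound, and it is a clean repackaging of the paper's starting point (the paper likewise fixes the subarcs $c_p$, $c_q$ leaving $p$ and $q$ on the same side of $\ep_i$). But there is a genuine gap exactly where you defer the exclusion of the remaining configurations---$c_p$ running to $\ep_j$ while $c_q$ runs to $\ep_k$ with $j\neq k$, or two distinct arcs based at $\ep_i$---to ``delicate bookkeeping'' resting on embeddedness, the no-bigon property of $\eta$, and an unspecified appeal to rigidity. That exclusion \emph{is} the lemma; everything before it is routine surface topology. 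Moreover, the mechanism you propose cannot close it: the forbidden configurations create neither a bigon with $\eta$ nor a self-intersection of $\be_{j_0}$. One can draw disjoint embedded arcs, in minimal position with $\ep$ and pairing the six punctures, in which the two arcs leaving adjacent points $p,q$ on the same side of $\ep_1$ terminate on $\ep_2$ and on $\ep_3$ respectively; the picture is locally flawless. What makes it impossible is global: the $\be_j$ must bound disjoint bridge disks in the trivial tangle, and this hypothesis never actually enters your argument (you say it is ``essential'' but never use it). A secondary point in the same vein: case (ii) of the lemma requires a \emph{wave}, i.e.\ the opposite-sign condition at the endpoints, whereas your pair-of-pants classification only sees essential arcs based at a circle; the same-sign arcs are additional configurations your case list silently drops.

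For comparison, the paper's proof supplies precisely the missing mechanism. It assumes none of (i)--(iii) holds, invokes Lemma~\ref{wave existence} to produce a wave $\omega_\be$ of the system $\be$ with respect to $\ep$ (a global object---not merely the local classification of arc types, which is how you use that lemma), normalizes the picture by half Dehn twists along the disk neighborhoods of the $\ep_k$, and then checks, case by case according to whether $p,q$ lie on $\ep_1$, $\ep_2$, or $\ep_3$ and inside or outside $\omega_\be$, that every continuation of $c_p$ and $c_q$ compatible with adjacency and with the failure of (i)--(iii) is forced into an infinite spiral around one of the disks, contradicting the finiteness of $\be\cap\ep$. Your ``parallel-strip'' analysis correctly enumerates the possible terminal configurations of $c_p$ and $c_q$, but it contains no step that kills the bad ones; to repair the proposal you would have to carry out this continuation-and-spiral argument (or some other argument that genuinely invokes the bridge-disk condition on $\be$), rather than assert that bigons and embeddedness suffice.
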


\begin{proof}
Suppose for contradiction that the adjacent intersection points $p$ and $q$ in $\ep_{i}$ for some $i\in\{1, 2, 3\}$ don't satisfy any of the three conditions in the lemma. Let $c_p$ and $c_q$ be subarcs of $\be_{j_0}$ starting from $p$ and $q$ respectively on the same side of $\ep_i$.
Since $\be$ is not isotopic to $\ep$, Lemma~\ref{wave existence} implies that $\be$ has a wave $\omega_{\be}$ with respect to $\ep$. Without loss of generality, we may assume that the wave $\omega_\be$ belongs to a bridge arc $\be_1$ and is based at $\ep_1$ as shown in Figure~\ref{Rectangle-19}a.

First, assume that $\ep_i=\ep_1$, i.e., $p$ and $q$ lie on $\ep_1$. Without loss of generality, we may assume that there are two cases to consider depending on whether or not $p$ and $q$ lie inside the wave $\omega_\be$ as shown in Figures~\ref{Rectangle-19}b and ~\ref{Rectangle-19}c.
In the case where $p$ and $q$ lie inside the wave $\omega_\be$ in Figure~\ref{Rectangle-19}b, because of the assumption that $p$ and $q$ don't satisfy any of the three conditions in the lemma, after, if necessary, taking a multiple of half Dehn twists along the disk $E_2$, which is a regular neighborhood of the bridge arc $\ep_2$ in the boundary of the tangle, we may assume that $c_p$ and $c_q$ appear as shown in Figure~\ref{Rectangle-19}b where $c_p$ hits the bridge arc $\ep_2$ and $c_q$ turns around $\ep_2$. Then the curve $c_q$ has two possible directions to go at the point $J$, indicated by the dotted lines in Figure~\ref{Rectangle-19}b. Since it is assumed that $p$ and $q$ are adjacent and don't satisfy any of the three conditions in the lemma, the both possible directions of $c_q$ give rise to an infinite spiral, a contradiction.

For the case where $p$ and $q$ lie outside the wave $\omega_\be$ in Figure~\ref{Rectangle-19}c, similarly by the assumption for the adjacent points $p$ and $q$, we may assume that the curves $c_p$ and $c_q$ appear as shown in Figure~\ref{Rectangle-19}c, where $c_p$ runs along the wave $\omega_\be$ and hits $\ep_1$, while $c_q$ goes along the wave $\omega_\be$ and turns around $\ep_1$. Similarly as before, there are two possible directions to go for the extension of $c_q$, each of which either produces an infinite spiral or passes between the adjacent points $p$ and $q$, again a contradiction. Note that if either $c_p$ or $c_q$ hits $\ep_3$ first before meeting $\ep_1$ in this case, then it is easy to see that this leads to an infinite spiral or violates the assumption that $p$ and $q$ are adjacent and do not satisfy any of the three conditions in the lemma.

\begin{figure}[tbp]
\centering
\includegraphics[width = 0.75\textwidth]{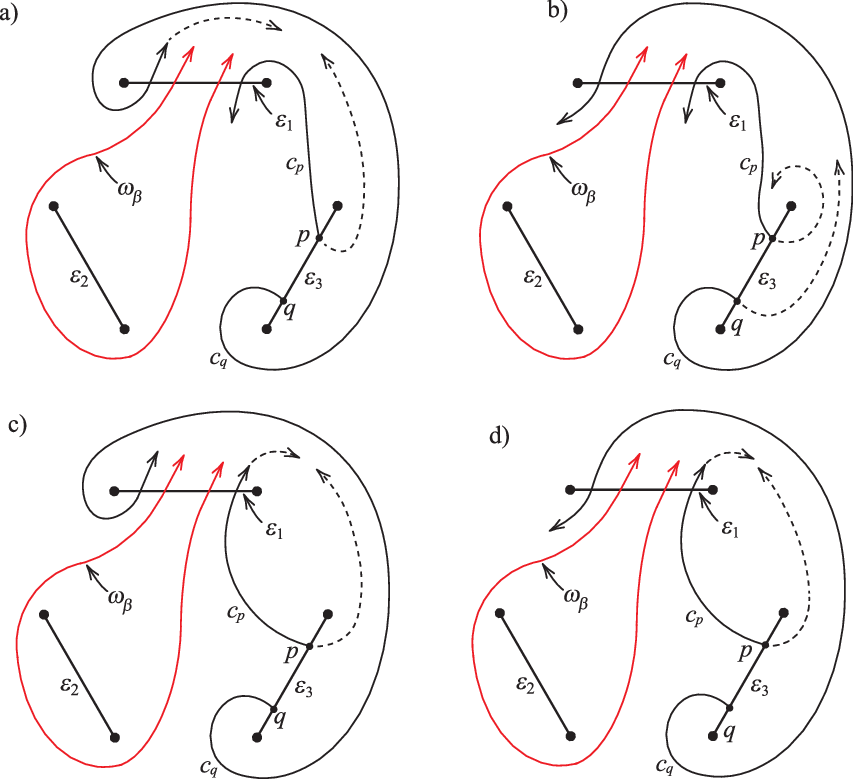}
\caption{A wave $\omega_\be$ of $\be_1$ which is based at $\ep_1$, and the two curves $c_p$ and $c_q$ starting from the adjacent points $p$ and $q$ which lie in $\ep_3$.}
\label{Rectangle-20}
\end{figure}

Second, we assume that $\ep_i=\ep_2$, i.e., $p$ and $q$ lie on $\ep_2$ as shown in Figure~\ref{Rectangle-19}d. Similarly to the first case, we may assume that the subarcs $c_p$ and $c_q$ have the configuration depicted in Figure~\ref{Rectangle-19}d. We then extend the curves $c_p$ and $c_q$ from the points $p$ and $q$ respectively to the opposite side of $\ep_2$ as indicated by the dotted lines in Figure~\ref{Rectangle-19}d. By the assumption that $p$ and $q$ are adjacent and do not satisfy any of the three conditions in the lemma, the dotted line of $c_p$ produces an infinite spiral, a contradiction.

Lastly, we assume that $\ep_i=\ep_3$, i.e., $p$ and $q$ lie on $\ep_3$. Because of the assumption for $p$ and $q$, there are only four cases for $c_p$ and $c_q$ to consider as illustrated in Figure~\ref{Rectangle-20}. By the same reason as before that $p$ and $q$ are adjacent and are assumed not to satisfy any of the three conditions in the lemma, infinite spirals occur from dotted lines for all the cases, a contradiction.
\end{proof}

Now we introduce another lemma used in the proof of Theorem~\ref{main result3}. For this lemma, we need some terminology analogous to
the concept of a wave. As usual, let $\alpha=\{\alpha_1,\alpha_2, \alpha_3\}$ be a system of bridge arcs of a rational $3$-tangle $T=(B, \tau)$,
where $\tau$ is the union of three strings $\tau_1, \tau_2, \tau_3$.
An \textit{essential arc} \textit{with respect to a system of bridge
arcs} $\al$ is defined to be an embedded arc $\eta$ in $\partial B$ such that its interior is disjoint from $\al$, both endpoints
of $\eta$ lies in the same bridge arc $\al_i$ for some $i\in\{1, 2, 3\}$, and $\eta$ is essential in $\partial B -N(\al)$.
We say that an \textit{essential arc} $\eta$ \textit{is based at} $\al_i$.
Note that it follows immediately from the definition of a wave that a wave with respect to $\al$ is an essential arc with respect to $\al$.
Also, as the property of a wave, if $\eta$ is an essential arc based at, say $\al_1$, then $\eta$, together with a subarc of $\alpha_1$, separates the
other two bridge arcs $\alpha_2$ and $\alpha_3$. See Figure~\ref{Rectangle-3}b.

\begin{figure}[tbp]
\centering
\includegraphics[width = 0.75\textwidth]{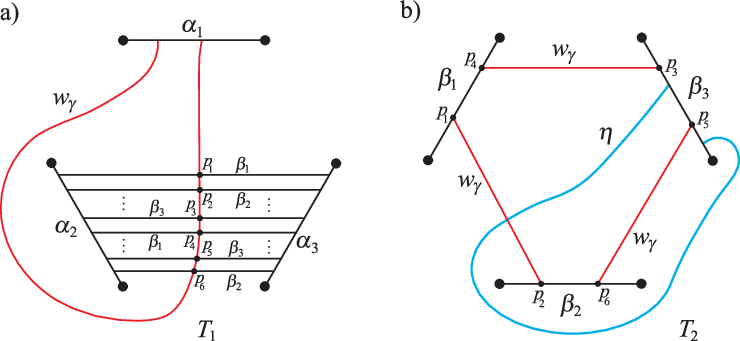}
\caption{Subarcs of $\gamma$ connecting $\beta_i$ and $\beta_j$ and intersecting an essential arc based at $\be_i$ for $i, j\in\{1, 2, 3\}$.}
\label{Rectangle-3}
\end{figure}

\begin{Lem}\label{any bridge arc connects}
Let $(T_1, T_2;S)$ be a $3$-bridge decomposition of a knot $K$. Suppose $\alpha=\{\alpha_1,\alpha_2,\alpha_3\}$ and $\beta=\{\beta_1,\beta_2,\beta_3\}$ are
systems of bridge arcs for $T_1$ and $T_{2}$ respectively which satisfy the rectangle condition. If $\gamma$ is a bridge arc for $T_1$, then
for every pair $\{i, j\}(i\neq j)$ in $\{1, 2, 3\}$, there exists a subarc of $\gamma$ connecting $\beta_i$ and $\beta_j$ whose interior is disjoint from $\be$.
Furthermore, for each $i\in\{1,2, 3\}$ every essential arc based at $\be_i$ intersects $\gamma$.
\end{Lem}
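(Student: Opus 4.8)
The plan is to prove both assertions of Lemma~\ref{any bridge arc connects} by exploiting the rectangle condition together with the structural information provided by Lemma~\ref{wave existence} and the essential-arc separation property. I would first establish the second (``furthermore'') statement, since it seems to be the more fundamental one and the first statement should follow from it.

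\textbf{Second assertion.} Fix $i\in\{1,2,3\}$ and let $\eta$ be any essential arc based at $\be_i$. By the separation property recalled just before the lemma (see Figure~\ref{Rectangle-3}b), $\eta$ together with a subarc of $\be_i$ separates the two remaining bridge arcs $\be_j$ and $\be_k$ on $S$, where $\{i,j,k\}=\{1,2,3\}$. Now I would invoke the rectangle condition for $\al$ and $\be$: among the nine $4$-tuples there is one of the form $(\al_s,\al_t,\be_j,\be_k)$ realized by a rectangle $R$ whose interior is disjoint from $\al\cup\be$, so $R$ provides an arc in $S$ joining $\be_j$ to $\be_k$ missing $\be$. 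Such a connecting arc must cross the separating curve $\eta\cup(\text{subarc of }\be_i)$, and since its interior is disjoint from $\be$ (hence from the subarc of $\be_i$), it must cross $\eta$ itself. This shows $\eta$ is unavoidable for the rectangle edges. To pass from the rectangles to an \emph{arbitrary} bridge arc $\gamma$ for $T_1$, I would use that $\gamma$ is carried by the disk system: by Lemma~\ref{wave existence}, if $\gamma$ is not isotopic to any $\al_\ell$ it admits a wave with respect to $\al$, and more to the point $\gamma$, being a bridge arc, runs between two punctures of $S$ and therefore cannot be confined to one side of the separating curve while realizing the connectivity forced by the rectangle condition; thus $\gamma$ too must meet $\eta$.

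\textbf{First assertion.} For the first statement I would argue that for every pair $\{i,j\}$ there is a subarc of $\gamma$ connecting $\be_i$ and $\be_j$ with interior disjoint from $\be$. The idea is to read off such a subarc directly from the rectangle $R$ realizing the $4$-tuple $(\al_s,\al_t,\be_i,\be_j)$: two opposite edges of $R$ lie on $\al_s$ and $\al_t$, and the other two lie on $\be_i$ and $\be_j$. Since $\gamma$ is a bridge arc in $T_1$ and the $\al$-edges of $R$ lie on the system $\al$, I would track how $\gamma$ interacts with the band of $S$ swept out between $\be_i$ and $\be_j$ by $R$. Using that the interior of $R$ is disjoint from $\al\cup\be$ and that $\gamma$ may be isotoped into minimal position against $\al$, an outermost-arc argument (the same mechanism as in Lemma~\ref{wave existence}) produces a subarc of $\gamma$ lying in that band with endpoints on $\be_i$ and $\be_j$ and interior disjoint from $\be$.

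\textbf{Main obstacle.} The hard part will be the passage from the nine rectangles, which are statements about the \emph{fixed} systems $\al$ and $\be$, to the arbitrary bridge arc $\gamma$, which need bear no relation to $\al$. The rectangle condition only directly controls arcs running between $\al$-curves and $\be$-curves, so the core difficulty is to show that an \emph{arbitrary} bridge arc inherits the same connectivity and unavoidability. I expect to resolve this by combining the wave existence result (Lemma~\ref{wave existence}) with the separation property of essential arcs: a bridge arc for $T_1$ cannot be disjoint from all essential arcs based at a given $\be_i$ because its endpoints sit at punctures on prescribed sides of the separating curves, and the rectangle condition forces the requisite crossings. Making this topological forcing precise, rather than merely plausible from the figures, is the step that will require the most care.
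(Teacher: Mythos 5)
Your proposal inverts the paper's logical order, and the inversion is fatal rather than cosmetic. The paper proves the connectivity statement first and then gets the ``furthermore'' statement in one line: an essential arc $\eta$ based at $\be_i$, together with a subarc of $\be_i$, separates $\be_j$ from $\be_k$ (Figure~\ref{Rectangle-3}b), so the already-constructed subarc of $\gam$ joining $\be_j$ to $\be_k$ with interior disjoint from $\be$ must cross $\eta$. Your attempt to prove the second assertion first collapses at exactly the point you flag as the ``main obstacle'': the claim that $\gam$, ``being a bridge arc, runs between two punctures of $S$ and therefore cannot be confined to one side of the separating curve'' is false as a topological statement --- the two punctures joined by $\gam$ may both lie on the same side of $\eta\cup(\text{subarc of }\be_i)$, for instance when they are the two endpoints of $\be_j$. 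The only thing that forces the crossing is the connectivity assertion itself, so when you then invoke ``the connectivity forced by the rectangle condition'' for $\gam$ you are assuming the first assertion, which you have not yet proven; the argument is circular.

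The missing mechanism, which your outermost-arc sketch for the first assertion does not supply, is the following. When $\gam$ is not isotopic to any $\al_\ell$, Lemma~\ref{wave existence} gives a wave $\omega_\gam$ of $\gam$ based at some $\al_{i_0}$, say $\al_1$, and the two crucial properties are that $\omega_\gam$ (with a subarc of $\al_1$) \emph{separates} $\al_2$ from $\al_3$ and that the interior of $\omega_\gam$ is disjoint from $\al$. One must then use precisely the rectangles realizing the $4$-tuples $(\al_2,\al_3,\be_i,\be_j)$ --- not an arbitrary $(\al_s,\al_t,\be_i,\be_j)$ as in your sketch. Each such rectangle is a band whose $\al$-edges lie on $\al_2$ and $\al_3$ and whose $\be$-edges join $\al_2$ to $\al_3$ with interiors disjoint from $\al$; since the separating curve splits the two $\al$-edges of the band, $\omega_\gam$ must cross the band, and since $\omega_\gam$ cannot meet the $\al$-edges, its crossing arc runs from the $\be_i$-edge to the $\be_j$-edge. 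That crossing arc is the desired subarc of $\gam$, and its interior avoids $\be$ because it lies inside the rectangle. For an arbitrary pair $(\al_s,\al_t)$ there is no reason whatsoever that $\gam$ meets the corresponding rectangle, so ``tracking how $\gam$ interacts with the band'' cannot get started. (The remaining case, $\gam$ isotopic to some $\al_\ell$, is immediate, since the $\al$-edges of the rectangles are themselves subarcs of $\al_\ell$ joining $\be_i$ to $\be_j$ with interior disjoint from $\be$.)
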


\begin{proof}
First, we assume that $\gamma$ is isotopic to one of $\alpha_i$'s ($i=1, 2, 3$). Since $\alpha$ and $\beta$ satisfy the rectangle condition, there exists a subarc of $\gamma$ connecting $\beta_i$ and $\beta_j$ whose interior is disjoint from $\be$.

Now we assume that $\gamma$ is not isotopic to any of $\alpha_i$'s ($i=1, 2, 3$). By Lemma~\ref{wave existence}, there exists a wave $\omega_\gamma$ of $\gamma$ with respect to $\alpha$. Without loss of generality, the endpoints of $w_\gamma$ lie on $\alpha_1$ as shown in Figure~\ref{Rectangle-3}a. Then $\omega_\gamma$, together with a subarc of $\alpha_1$, separates $\alpha_2$ and $\alpha_3$. Since $\alpha$ and $\beta$ satisfy a rectangle condition, there exist three pairs of adjacent subarcs of $\beta_1$ and $\beta_2$, $\beta_2$ and $\beta_3$, and $\beta_3$ and $\beta_1$, all of which connect $\alpha_2$ and $\alpha_3$. Since $\omega_\gamma$ separates $\alpha_2$ and $\alpha_3$, for every pair $\{i, j\}$ ($i\neq j$) of $\{1, 2, 3\}$ there exists a subarc of $\gamma$ connecting $\beta_i$ and $\beta_j$ whose interior is disjoint from $\beta_1\cup\beta_2\cup\beta_3$. Figure~\ref{Rectangle-3}a
illustrates these subarcs of $\gamma$ whose endpoints are $p_1$ and $p_2$, $p_3$ and $p_4$, and $p_5$ and $p_6$.

If $\eta$ is an essential arc based at $\be_i$, then $\eta$ separates $\beta_j$ and $\beta_k$. Since there is a subarc of $\gamma$ connecting $\beta_j$ and $\be_k$, $\eta$ must intersect this subarc of $\gamma$ as shown in Figure~\ref{Rectangle-3}b. This proves the second statement of the lemma.
\end{proof}

We are now ready to prove Theorem~\ref{main result3}.

\begin{proof}[The proof of Theorem~\emph{\ref{main result3}}]
Since the knot $8_5$ is known to be a prime knot, it follows immediately from Theorem~\ref{main result-intro3} that the $3$-bridge decomposition $(T_1, T_{\epsilon};S)$ of the knot $8_5$ in Figure~\ref{Rectangle-7}a is strongly irreducible.

For the proof concerning the rectangle condition, we will show that there are no systems of bridge arcs for $T_1$ and $T_\ep$ which satisfy the rectangle condition.
Let $\alpha=\{\alpha_1,\alpha_2,\alpha_3\}$ and $\beta=\{\beta_1,\beta_2,\beta_3\}$ be systems of bridge arcs for $T_1$ and $T_\ep$ respectively. Then we will show that $\alpha$ and $\beta$ do not satisfy the rectangle condition. Suppose for the contradiction that $\alpha$ and $\beta$ do satisfy the rectangle condition.

First, assume that $\beta=\{\beta_1,\beta_2,\beta_3\}$ is isotopic to $\ep=\{\ep_1, \ep_2, \ep_3\}$. Then by Lemma~\ref{any bridge arc connects},
any bridge arc for $T_1$ has a subarc connecting $\ep_i$ and $\ep_j$ for every pair $\{i, j\}(i\neq j)$ in $\{1, 2, 3\}$ whose interior is disjoint from $\ep_1\cup\ep_2\cup\ep_3$.
However, the green and blue bridge arcs $\del_2$ and $\del_3$ of $T_1$ in Figure~\ref{Rectangle-7}b don't have a subarc connecting $\ep_1$ and $\ep_3$ without intersecting $\ep_2$, a contradiction.

Second, assume that $\beta$ is not isotopic to $\ep$.
It follows from Theorem~\ref{unique normal form}
that $\beta$ is not in a normal form with respect to $\ep$, implying that
there exist two adjacent intersection points $p$ and $q$ of $\beta_{j_0}$ in $\ep_i$ for some $i, j_0\in\{1, 2, 3\}$.
Let $\eta$ be the subarc of $\ep_i$ connecting $p$ and $q$.
Since bridge arcs are assumed to intersect each other transversely and minimally, $\eta$ is an essential arc based at $\beta_{j_0}$.
By Lemma~\ref{any bridge arc connects} any bridge arc for $T_1$ intersects $\eta$.
Therefore all of the red, green, blue bridge arcs $\del_1, \del_2, \del_3$ of $T_1$ in Figure~\ref{Rectangle-7} also intersect $\eta$.

Now we consider three cases: (1) $\eta\subseteq \ep_1$, (2) $\eta\subseteq \ep_2$, and (3) $\eta\subseteq \ep_3$.
In each case, we bring out a contradiction, thereby proving the theorem.

\begin{figure}[tbp]
\centering
\includegraphics[width = 1\textwidth]{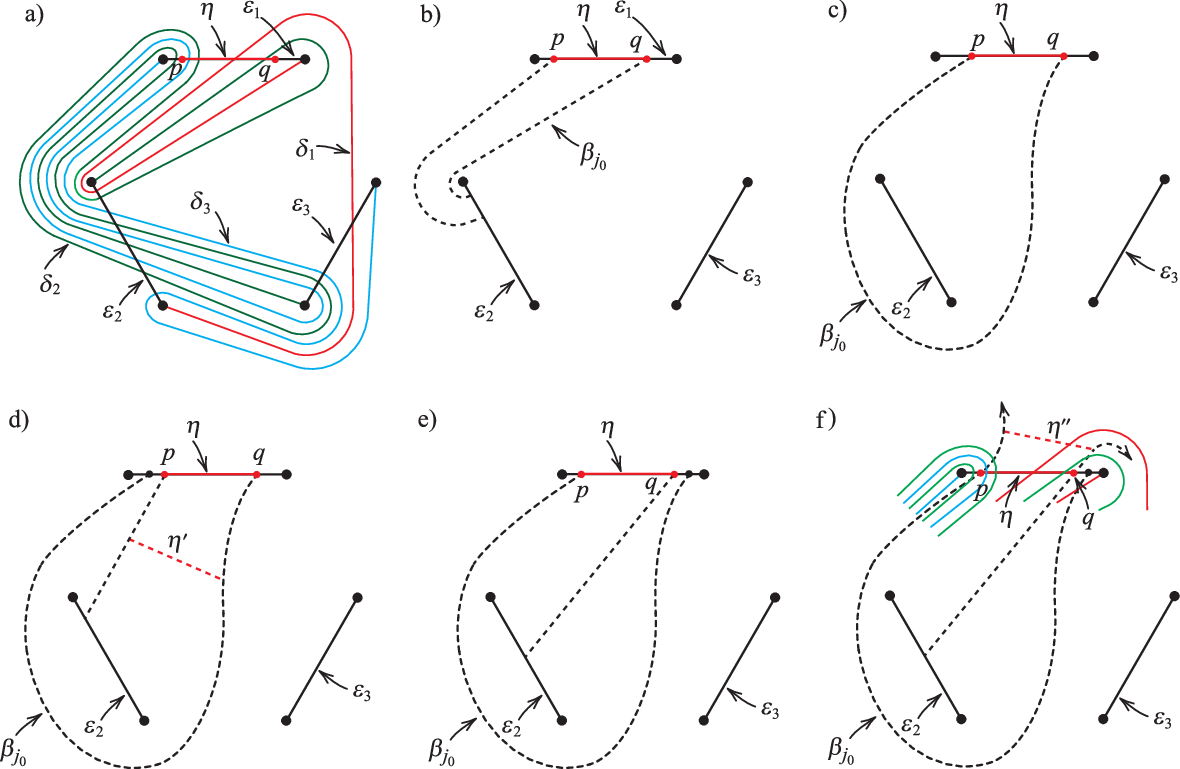}
\caption{The case where $\eta\subseteq \ep_1$ and the three cases (i), (ii), and (iii) for the subarcs of $\be_{j_0}$ with the endpoints $p$ and $q$.}
\label{Rectangle-12}
\end{figure}

(1) Suppose that $\eta\subseteq \ep_1$. Since $\eta$ intersects each of the red, green, blue bridge arcs $\del_1, \del_2, \del_3$,
the endpoints $p$ and $q$ of $\eta$ must have the configuration illustrated in Figure~\ref{Rectangle-12}a, where
$\eta$ contains at least one intersection point with each of $\del_1, \del_2, \del_3$.
By Lemma~\ref{adjacent intersections}, this situation gives rise to three possible cases:\\

\noindent(i) $p$ and $q$ are the endpoints of parallel subarcs of $\be_{j_0}$ connecting $\ep_1$ and $\ep_j$ for some $j\in\{1, 2, 3\}$;\\
(ii) $p$ and $q$ are the endpoints of a wave of $\be_{j_0}$ based at $\ep_{1}$; or\\
(iii) one of $p$ and $q$ is an endpoint of a wave of $\be_{j_0}$ based at $\ep_{1}$, and the other point is an endpoint of a subarc of $\be_{j_0}$ connecting $\ep_{1}$ and $\ep_j$ for some $j\in\{2, 3\}$. \\

\noindent However it is easy to see from Figure~\ref{Rectangle-12}a that if $\ep_j$ is $\ep_1$ or $\ep_3$ for the case (i), if the wave of $\be_{j_0}$ based at $\ep_{1}$ together with $\eta$ contains $\ep_3$ inside for the case (ii), and if $\ep_j=\ep_3$ for the case (iii), then there exists an essential arc based at $\be_{j_0}$ which doesn't intersect one of $\del_1, \del_2, \del_3$. This contradicts to Lemma~\ref{any bridge arc connects}. Therefore, in cases (i) and (iii) we must have $\ep_j=\ep_2$, and in case (ii) the wave of $\beta_{j_0}$ based at $\ep_1$ together with $\eta$ must contain $\ep_2$ inside.

Now we may assume that the cases (i), (ii), and (iii) correspond to the configuration shown in Figures~\ref{Rectangle-12}b $\sim$ \ref{Rectangle-12}e respectively. Figures~\ref{Rectangle-12}b and \ref{Rectangle-12}c illustrate, respectively, a specific type of parallel subarcs and a wave of $\beta_{j_0}$ in cases (i) and (ii). Figures~\ref{Rectangle-12}d and \ref{Rectangle-12}e depict specific types of subarcs of $\be_{j_0}$ for case (iii) where two subcases arise depending on which point of $p$ and $q$ is an endpoint of the wave of $\be_{j_0}$.

By the standard property of properly embedded arcs in a pair of pants, every other type of subarc or wave of $\beta_{j_0}$ in cases (i), (ii), and (iii) can be obtained from the specific configurations shown in Figures~\ref{Rectangle-12}b $\sim$ \ref{Rectangle-12}e by performing a multiple of half Dehn twists along the disk in the bridge sphere $S$ that is the regular neighborhood of one of the bridge arcs $\ep_1, \ep_2,$ or $\ep_3$. For instance, in case (i), any parallel subarcs of $\be_{j_0}$ with endpoints $p$ and $q$ connecting $\ep_1$ and $\ep_2$ can be obtained from the parallel subarcs in Figure~\ref{Rectangle-12}b by
twisting along the disks that are regular neighborhoods of $\ep_1$ and $\ep_2$ in $S$.

For case (i), note that if we twist the parallel subarcs of $\be_{j_0}$ in Figure~\ref{Rectangle-12}b
along the disk that is the regular neighborhood of $\ep_1$ or $\ep_2$ in $S$, then we can find an essential arc $\eta'$
based at $\be_{j_0}$ , for example as illustrated in Figure~\ref{Rectangle-12-1}a, which doesn't intersect one of the bridge arcs $\del_1, \del_2,$ and $\del_3$. This contradicts Lemma~\ref{any bridge arc connects}. For the parallel subarcs of $\be_{j_0}$ in Figure~\ref{Rectangle-12}b, we can extend them as shown in Figure~\ref{Rectangle-12-1}b to eventually obtain an essential arc $\eta'$ based at $\be_{j_0}$, again leading to a contradiction with Lemma~\ref{any bridge arc connects}.

\begin{figure}[tbp]
\centering
\includegraphics[width = 0.8\textwidth]{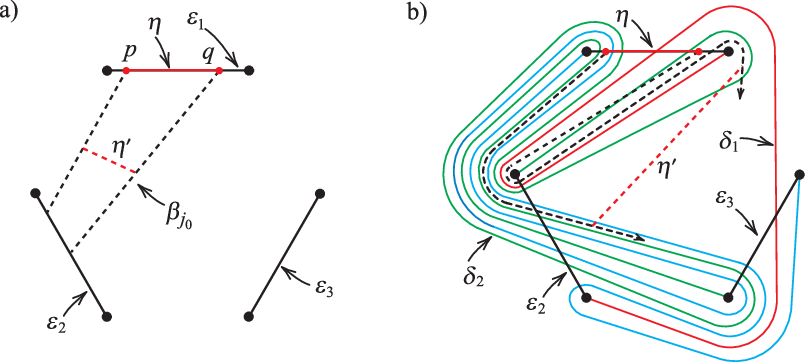}
\caption{The case (i) for the parallel subarcs of $\be_{j_0}$ in Figure~\ref{Rectangle-12}b.}
\label{Rectangle-12-1}
\end{figure}

For case (ii), one of the other two bridge arcs $\be_{i_0}$ and $\be_{k_0}$, say $\be_{i_0}$, in $\be=\{\be_1, \be_2, \be_3\}$ must be the bridge arcs $\ep_2$
because the endpoints $p$ and $q$ are adjacent intersection points of $\be$ with $\ep$.
We can observe from Figure~\ref{Rectangle-12}a that
every subarc of $\del_3$ connecting $\be_{i_0}=\ep_2$ and $\be_{k_0}$ intersects $\be_{j_0}$, which contradicts Lemma~\ref{any bridge arc connects}.

For case (iii), there are two configurations as shown in Figures~\ref{Rectangle-12}d and \ref{Rectangle-12}e. We can observe from Figure~\ref{Rectangle-12}d
that since $p$ and $q$ are adjacent points, there exists an essential arc $\eta'$ based at $\be_{j_0}$ which doesn't intersect the blue bridge arc $\del_3$, which contradicts Lemma~\ref{any bridge arc connects}. For the configuration in Figure~\ref{Rectangle-12}e, we extend the bridge arc $\be_{j_0}$ from the endpoints $p$ and $q$ without intersecting the wave of $\beta_{j_0}$. In particular, since there is a wave based at $\be_{j_0}$, the extension of $\be_{j_0}$ from the endpoint $p$ escapes from the green and blue bridge arcs $\del_2$ and $\del_3$ near $p$. Then we can construct an essential arc $\eta''$ based at $\be_{j_0}$ as illustrated in Figure~\ref{Rectangle-12}f, which doesn't intersect $\del_3$, again contradicting Lemma~\ref{any bridge arc connects}.

\begin{figure}[tbp]
\centering
\includegraphics[width = 1\textwidth]{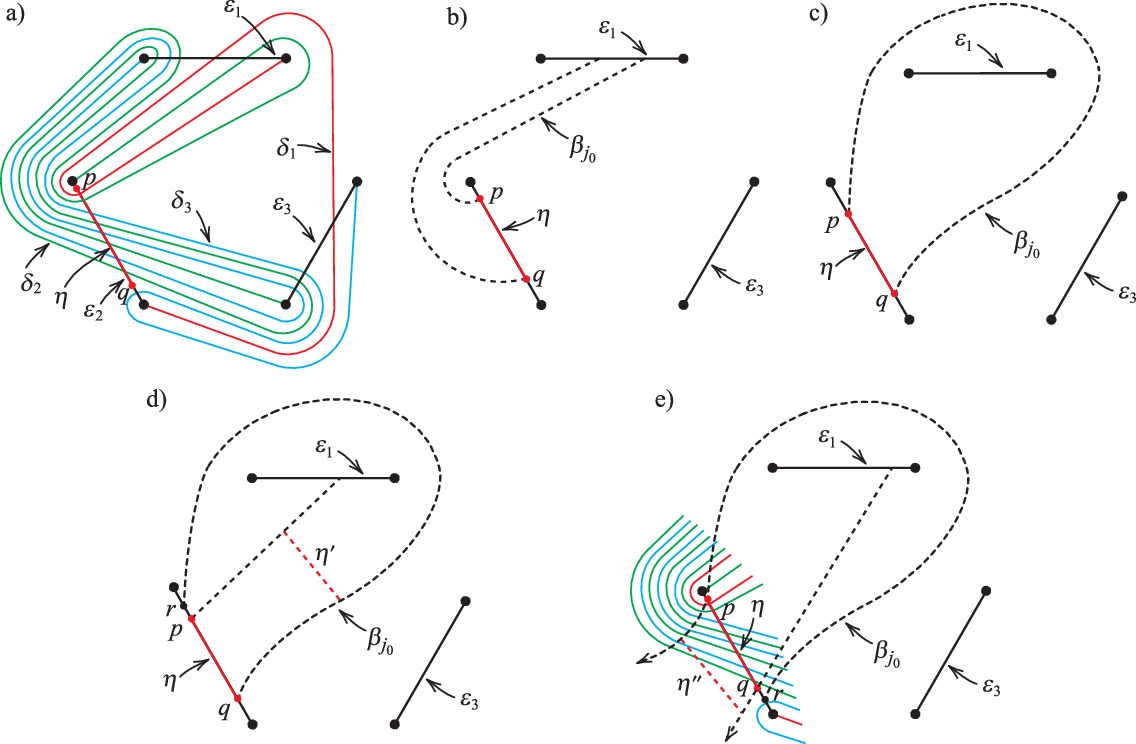}
\caption{The case where $\eta\subseteq \ep_2$ and the three cases (i), (ii), and (iii) for the subarcs of $\be_{j_0}$ with the endpoints $p$ and $q$.}
\label{Rectangle-13}
\end{figure}

(2) Suppose that $\eta\subseteq \ep_2$. We can apply an argument similar to that for the case $\eta\subseteq \ep_1$.
Since $\eta$ intersects all of the red, green, blue bridge arcs $\del_1, \del_2, \del_3$,
the endpoints $p$ and $q$ of $\eta$ have the configuration shown in Figure~\ref{Rectangle-13}a, where
$\eta$ contains at least one intersection point with each of $\del_1, \del_2, \del_3$.

There are three cases (i), (ii), and (iii) for the endpoints $p$ and $q$ as described in Lemma~\ref{adjacent intersections}:\\

\noindent(i) $p$ and $q$ are the endpoints of parallel subarcs of $\be_{j_0}$ connecting $\ep_2$ and $\ep_j$ for some $j\in\{1, 2, 3\}$;\\
(ii) $p$ and $q$ are the endpoints of a wave of $\be_{j_0}$ based at $\ep_{2}$; or\\
(iii) one of $p$ and $q$ is an endpoint of a wave of $\be_{j_0}$ based at $\ep_{2}$, and the other point is an endpoint of a subarc of $\be_{j_0}$ connecting $\ep_{2}$ and $\ep_j$ for some $j\in\{1, 3\}$. \\

As before, we can see that if $\ep_j$ is $\ep_2$ or $\ep_3$ in case (i), and if $\ep_j$ is $\ep_3$ in case (iii), then there exists an essential arc based at $\be_{j_0}$ which doesn't intersect one of $\del_1, \del_2, \del_3$, which contradicts to Lemma~\ref{any bridge arc connects}. For case (ii), if the wave of $\be_{j_0}$ based at $\ep_{2}$ together with $\eta$ contains $\ep_3$ inside, then one of the other two bridge arcs $\be_{i_0}$ and $\be_{k_0}$, say $\be_{i_0}$, in $\be=\{\be_1, \be_2, \be_3\}$ must be the bridge arcs $\ep_3$ because the endpoints $p$ and $q$ are adjacent intersection points of $\be$ with $\ep$. We can observe from Figure~\ref{Rectangle-12}a that
every subarc of $\del_1$ connecting $\be_{i_0}=\ep_3$ and $\be_{k_0}$ intersects $\be_{j_0}$, which is again a contradiction to Lemma~\ref{any bridge arc connects}.

Now we may assume that cases (i), (ii), and (iii) have the configurations shown in Figures~\ref{Rectangle-13}b $\sim$ \ref{Rectangle-13}e respectively, where Figures~\ref{Rectangle-13}d and ~\ref{Rectangle-13}e depict case (iii) depending on which of the points $p$ and $q$ is an endpoint of the wave of $\be_{j_0}$. In case (i) there are adjacent intersection points of $\be$ in $\ep_1$ that belong to $\be_{j_0}$. Therefore, this case reduces to case (1), where $\eta\subseteq \ep_1$, which has already been handled. For the other cases (ii) and (iii), we can apply a similar argument as in the case where $\eta \subseteq \epsilon_1$.

(3) Suppose that $\eta\subseteq \ep_3$.
Since $\eta$ contains at least one intersection point with each of $\del_1, \del_2, \del_3$,
there are three cases (i), (ii), and (iii) for the endpoints $p$ and $q$ as described in Lemma~\ref{adjacent intersections}.
For cases (ii) and (iii), we can apply a similar argument to that used when $\eta\subseteq \ep_1$.
For case (i), unlike the cases (1) and (2), it is easy to see that there exists an essential arc based at $\be_{j_0}$ which doesn't intersect one of $\del_1, \del_2, \del_3$, a contradiction to Lemma~\ref{any bridge arc connects}.
\end{proof}


\begin{thebibliography}{1}

\bibitem[1]{D92} H. Doll, {\em A generalized bridge number for links in 3-manifolds}, Math. Ann. 294 (1992), 707-717.

\bibitem[2]{CG} A. Casson and  C. Gordon, {\em Manifolds with irreducible Heegaard splittings of arbitrary large genus}, Unpublished.

\bibitem[3]{HS01} C. Hayashi and K. Shimokawa, {\em Thin position of a pair (3-manifold, 1-submanifold)}, Pacific J. Math. 197 (2001), 301-324.

\bibitem[4]{KL12} J. Kim and J. Lee, {\em Rectangle condition for compression body and 2-fold branched covering}, J. Knot Theory Ramifications 21 (2012), no. 18, 1250078.

\bibitem[5]{K20} B. Kwon, {\em Rectangle condition for bridge decompositions of links}, J. Knot Theory Ramifications 28 (2019), no. 14, 1950082, 7 pp.

\bibitem[6]{K24} B. Kwon, {\em On detecting the trivial rational 3-tangle}, Topol. Appl. 341 (2024), 108752, 12 pp.

\bibitem[7]{KK21} B. Kwon and S. Kang, {\em Rectangle conditions and families of 3-bridge prime knots}, Topol. Appl. 291 (2021), 107453, 13 pp.

\bibitem[8]{KK23} B. Kwon and S. Kang, {\em 3-Bridgeness under adding crossings to alternating 3-bridge knots in a 3-bridge representation}, J. Knot Theory Ramifications 32 (2023), no. 11, 2350070, 22 pp.

\bibitem[9]{KL20} B. Kwon and J. Lee, {\em Properties of Casson-Gordon's rectangle condition}, J. Knot Theory Ramifications 29 (2020), no. 12, 2050083, 13 pp.

\bibitem[10]{KL24} B. Kwon and J. Lee, {\em Normal forms for rational 3-tangles}, Topol. Appl. 369 (2025), 109388, 12 pp.

\bibitem[11]{R76} D. Rolfsen, Knots and links, Publish or Perish Inc. (1976).

\bibitem[12]{Sb54} H. Schubert, {\em $\ddot{U}$ber eine numerische Knoteninvariante}, Math. Z. 61 (1954), 245-288.

\bibitem[13]{Sl03} J. Schultens, {\em Additivity of bridge numbers of knots}, Math. Proc. Cambridge Philos. Soc. 135 (2003), 539-544.

\end{thebibliography}
\end{document}